\theoremstyle{plain}
\newtheorem{theorem}{Theorem}[section]
\newtheorem{lemma}[theorem]{Lemma}
\newtheorem{proposition}[theorem]{Proposition}
\theoremstyle{definition}
\newtheorem{definition}[theorem]{Definition}%\ignorespaces}
\newtheorem{remark}[theorem]{Remark}
\renewcommand{\d}{\:\! \mathrm{d}}
\long\def\symbolfootnote[#1]#2{\begingroup
\def\thefootnote{\fnsymbol{footnote}}\footnote[#1]{#2}\endgroup}
\numberwithin{equation}{section}
\begin{document}
\title[A subelliptic nonlocal equations on stratified Lie groups] { Compact Embeddings, Eigenvalue Problems, and  subelliptic Brezis-Nirenberg equations involving singularity on stratified Lie groups}
\author{Sekhar Ghosh, Vishvesh Kumar and Michael Ruzhansky}
\address[ Sekhar Ghosh]{Department of Mathematics: Analysis, Logic and Discrete Mathematics, Ghent University, Ghent, Belgium\newline and \newline Statistics and Mathematics Unit, Indian Statistical Institute Bangalore, Bengaluru, 560059, India}
\email{sekharghosh1234@gmail.com / sekhar.ghosh@ugent.be}
\address[Vishvesh Kumar]{Department of Mathematics: Analysis, Logic and Discrete Mathematics, Ghent University, Ghent, Belgium}
\email{vishveshmishra@gmail.com / vishvesh.kumar@ugent.be}

\address[Michael Ruzhansky]{Department of Mathematics: Analysis, Logic and Discrete Mathematics, Ghent University, Ghent, Belgium\newline and \newline
School of Mathematical Sciences, Queen Mary University of London, United Kingdom}
\email{michael.ruzhansky@ugent.be}
\thanks{{\em 2020 Mathematics Subject Classification: } 35R03, 35H20, 35P30, 22E30, 35R11, 35J75}

\keywords{Stratified Lie groups; Heisenberg group; Fractional $p$-sub-Laplacian; Brezis-Nirenberg equations, Sobolev-Rellich-Kondrachov type embeddings; Eigenvalue problems; Nehari manifold; Singularity.}

\maketitle

\begin{abstract} 
 The purpose of this paper is twofold: first  we study an eigenvalue problem for the fractional $p$-sub-Laplacian over the  fractional Folland-Stein-Sobolev spaces on stratified Lie groups. We apply variational methods to investigate  the eigenvalue problems. We conclude the positivity of the first eigenfunction via the strong minimum principle for the fractional $p$-sub-Laplacian. Moreover, we deduce that the first eigenvalue is simple and isolated. Secondly, utilising established properties, we prove the existence of at least two weak solutions via the Nehari manifold technique to  a class of  subelliptic singular problems associated with the fractional  $p$-sub-Laplacian on stratified Lie groups. We also investigate the boundedness of  positive weak solutions to the considered problem via the Moser iteration technique. The results obtained here are also new even for the case $p=2$ with $\mathbb{G}$ being the Heisenberg group.
\end{abstract}

\tableofcontents
	\section{Introduction}

	The study of nonlocal elliptic partial differential equations (PDEs) and developments of the corresponding tools have been well explored in the Euclidean setting  during the last few decades. Apart from the mathematical point of view, the theory of PDEs associated with  nonlocal (or fractional) operators witnessed vast applications in different fields of applied sciences. We list a few (in fact a tiny fraction of them) of such applications involving fractional models like the L\'{e}vy processes in probability theory, in finance, image processing, in anomalous equations, porous medium equations, Cahn-Hilliard equations and Allen-Cahn equations, etc. Interested readers may refer to \cite{ASS16, AMRT10, LPGSGZMCMAK20, TC03} and the references therein. These models have been one of the primary context  to study nonlocal PDEs both theoretically and numerically.

	One of the most important tools to study PDEs over bounded domains is the embeddings of Sobolev spaces into Lebesgue spaces. It says, ``If $\Omega\subset\mathbb{R}^N$ is open, then for $0<s<1<p<\infty$ with $N>ps$, the fractional Sobolev space $W^{s,p}(\Omega)$ is continuously embedded into $L^{q}(\Omega)$ for all $q\in[1,Np/(N-ps)]$. In addition, if $\Omega$ is bounded and is an extension domain, then the embedding is compact for all $q\in[1,Np/(N-ps))$." The compact embedding plays a crucial role for obtaining the existence of solutions of some PDEs. We refer the readers to see \cite{NPV12} for a well presented study of the fractional Sobolev spaces and the properties of the fractional $p$-Laplacian and its applications to PDEs. One can also consult  \cite{B11, E10} for the theory and tools developed for the classical Sobolev spaces.
	
	The Sobolev spaces (also known as Folland-Stein spaces) on stratified Lie groups were  first considered by Folland \cite{F75} and then several further properties have been obtained in the book by Folland and Stein \cite{FS82}.  The reader may  refer to several monographs devoted to the study of such spaces and the corresponding subelliptic operators \cite{BLU07, FR16, RS19}. For Sobolev embeddings  of Folland-Stein spaces over bounded domains of stratified Lie groups, we refer to \cite{CDG93}. Recently, the fractional Sobolev type inequality and the corresponding Sobolev embeddings were investigated in \cite{AM18} for weighted fractional Sobolev spaces on the Heisenberg group $\mathbb{H}^N$, whereas in \cite{KD20}, the authors established the fractional Sobolev type inequalities on stratified Lie groups (or homogeneous Carnot groups). In \cite {AM18}, the authors established the compact embeddings of Sobolev spaces $W_0^{s,p,\alpha}(\Omega)$ into Lebesgue spaces $L^p(\Omega)$ over a bounded extension domain $\Omega\subset\mathbb{H}^N$. We recall here the definition of an extension domain: A domain $\Omega \subset {\mathbb{G}}$ is said to be an extension domain of $W^{s,p}_0(\Omega)$ (see Section \ref{s2} for the definition) if for every $f \in W^{s,p}_0 (\Omega) $ there exist a $\tilde{f} \in W^{s,p}_0({\mathbb{G}})$ such that $\tilde{f}|_\Omega=f$ and $\|\tilde{f}\|_{W^{s,p}_0({\mathbb{G}})}\leq C_{Q,s,p, \Omega} \|f\|_{W^{s,p}_0(\Omega)},$ where $C_{Q,s,p, \Omega}$ is a positive constant depending only on $Q, s, p, \Omega.$   The extension property of a domain plays a crucial role in establishing such compact embeddings of the Sobolev spaces into  Lebesgue spaces ({\it cf.} Theorem 2.4, Lemma 5.1 in \cite{NPV12}). Recently, Zhou \cite{Z15} studied the characterizations of $(s,p)$-extension domains and embedding domains for the fractional Sobolev space on $\mathbb{R}^N$. To the best of our knowledge, we do not have such characterization for an arbitrary bounded domain in the case of stratified Lie groups.  In fact, because of the existence of characteristic points, the problem of finding classes of extension domains in stratified Lie groups is highly non-trival and there are essentially no examples for step $3$ and higher (see \cite{CG98}) . Thus, to overcome this issue, we will work with the fractional Sobolev space $X_0^{s,p}(\Omega)$ with vanishing trace (See Section \ref{s2} for the definition). 
	
 We first state the following embedding result for the fractional Sobolev space $X_0^{s,p}(\Omega).$
\begin{theorem}\label{l-3}
Let $\mathbb{G}$ be a stratified Lie group of homogeneous dimension $Q.$ Let $0<s<1<p<\infty$ and $Q>sp.$ Let $\Omega\subset\mathbb{G}$ be an open subset. Then the fractional Sobolev space $X_0^{s, p}(\Omega)$ is continuously embedded in $L^r(\Omega)$ for $p\leq r\leq p_s^*$, where $p_s^*:=\frac{Qp}{Q-sp}$, that is, there exists a positive constant $C=C(Q,s,p, \Omega)$ such that for all $u\in X_0^{s, p}(\Omega)$, we have
$$\|u\|_{L^r(\Omega)}\leq C \|u\|_{X_0^{s,p}(\Omega)}.$$
Moreover, if $\Omega$ is bounded, then the embedding
\begin{align}
    X_0^{s,p}(\Omega) \hookrightarrow L^r(\Omega)
\end{align}
is continuous for all $r\in[1,p_s^*]$ and is compact for all $r\in[1,p_s^*)$.
\end{theorem}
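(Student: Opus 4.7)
The plan is to reduce everything to inequalities on the whole group, since elements of $X_0^{s,p}(\Omega)$ vanish outside $\Omega$ and thus can be regarded (by zero extension) as elements of the global fractional Folland-Stein-Sobolev space on $\mathbb{G}$. First, for the continuous embedding at the critical exponent $r=p_s^*$, I would invoke the fractional Sobolev-type inequality on stratified groups from \cite{KD20}, which gives $\|u\|_{L^{p_s^*}(\mathbb{G})}\leq C\,[u]_{s,p}$, where $[u]_{s,p}$ denotes the Gagliardo seminorm. Since the seminorm of the zero extension is controlled by $\|u\|_{X_0^{s,p}(\Omega)}$, this yields the embedding $X_0^{s,p}(\Omega)\hookrightarrow L^{p_s^*}(\Omega)$. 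For intermediate $r \in (p,p_s^*)$, I would interpolate between $L^p$ and $L^{p_s^*}$ via the standard log-convexity inequality $\|u\|_{L^r}\le \|u\|_{L^p}^{1-\theta}\|u\|_{L^{p_s^*}}^{\theta}$ with $\theta$ chosen appropriately, controlling both endpoints by $\|u\|_{X_0^{s,p}(\Omega)}$. When $\Omega$ is bounded, Hölder's inequality extends the range down to $r\in[1,p)$ since $|\Omega|<\infty$.

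For the compact embedding with $\Omega$ bounded, my approach would be the Riesz-Fréchet-Kolmogorov criterion adapted to $\mathbb{G}$: a bounded subset of $L^r(\mathbb{G})$ with uniformly bounded supports is precompact provided translations are equicontinuous in $L^r$. The key quantitative estimate I would establish is
\[
\|\tau_h u - u\|_{L^p(\mathbb{G})} \le C\,|h|^s\,[u]_{s,p} \qquad \text{for all } h\in\mathbb{G},
\]
where $\tau_h$ denotes left translation and $|h|$ is a fixed homogeneous quasi-norm. This is proved by splitting the Gagliardo double integral (or, equivalently, by writing $\tau_h u - u$ as a convolution-type average of such differences) and exploiting homogeneity of the Haar measure under dilations. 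Given this estimate, for any bounded sequence $(u_n)\subset X_0^{s,p}(\Omega)$ extended by zero, translations are equicontinuous in $L^p(\mathbb{G})$ uniformly in $n$, while the supports are contained in a fixed bounded set (a slight enlargement of $\Omega$); Kolmogorov-Riesz-Fréchet then delivers a subsequence converging in $L^p(\Omega)$.

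To upgrade this $L^p$ convergence to $L^r$ convergence for every $r\in[1,p_s^*)$, I would interpolate once more: for $r\in(p,p_s^*)$, choose $\theta\in(0,1)$ with $\tfrac{1}{r}=\tfrac{1-\theta}{p}+\tfrac{\theta}{p_s^*}$ and use
\[
\|u_n-u_m\|_{L^r(\Omega)} \le \|u_n-u_m\|_{L^p(\Omega)}^{1-\theta}\,\|u_n-u_m\|_{L^{p_s^*}(\Omega)}^{\theta},
\]
where the second factor is bounded by the continuous embedding and the first factor tends to zero along the subsequence. For $r\in[1,p]$, boundedness of $\Omega$ and Hölder's inequality again give the conclusion directly. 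The limit function $u$ inherits membership in $L^r(\Omega)$ from these uniform bounds.

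The main obstacle I anticipate is the translation estimate above and the clean application of Kolmogorov-Riesz-Fréchet in the non-commutative setting: one must be careful that the homogeneous quasi-norm is only quasi-symmetric, that one works with left translations consistently, and that the bounded-support hypothesis survives passing from $\Omega$ to a neighborhood that absorbs the translates. Once that estimate is in hand, the remainder is a matter of interpolation and book-keeping. Throughout, the exclusion of the critical exponent $r=p_s^*$ from the compact embedding is essential (and unavoidable, via concentration), so the argument must stop strictly below $p_s^*$.
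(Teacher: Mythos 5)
Your continuity argument---zero extension, the critical fractional Sobolev inequality of \cite{KD20}, Lebesgue interpolation for $p<r<p_s^*$, and H\"older for $r<p$ when $\Omega$ is bounded---is exactly the paper's. The compactness argument, however, takes a genuinely different route. The paper proves a mollification version of the Kolmogorov criterion (Lemma~\ref{lemma3.2}: relative compactness in $L^r(\Omega)$ is equivalent to boundedness plus $\|T_\epsilon f - f\|_{L^r}\to 0$ uniformly), reduces $\|T_\epsilon f - f\|_{L^r}$ to $[T_\epsilon f - f]_{s,p}$ via the fractional Gagliardo--Nirenberg inequality, and then controls the seminorm of the mollification error by translating \emph{both} variables of the two-variable function $v(x,y)=(f(x)-f(y))/|y^{-1}x|^{(Q+ps)/p}$---a move that leaves the kernel unchanged because it is left-invariant on $\mathbb{G}\times\mathbb{G}$---followed by continuity of translations in $L^p(\mathbb{G}\times\mathbb{G})$ and dominated convergence. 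You instead propose a single-variable Besov-type bound plus the classical Kolmogorov--Riesz--Fr\'echet criterion; this is an equally reasonable route and, once the quantitative translation bound is available, it makes uniformity over bounded sets of $X_0^{s,p}(\Omega)$ automatic.

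The gap is in how you state the key estimate: $\|\tau_h u - u\|_{L^p(\mathbb{G})}\le C|h|^s[u]_{s,p}$ with $\tau_h$ a \emph{left} translation puts the translation on the wrong side. The Gagliardo kernel $|y^{-1}x|^{-(Q+ps)}$ comes from the left-invariant distance $d(x,y)=|y^{-1}x|$, so left translating $x\mapsto h^{-1}x$ moves $x$ by $d(h^{-1}x,x)=|x^{-1}h^{-1}x|$, a conjugate of $h$ whose homogeneous norm depends on $x$. Already on the Heisenberg group $|x^{-1}h^{-1}x|\sim(|h||x|)^{1/2}$ for small $h$, so the displacement is not comparable to $|h|$ and grows without bound in $|x|$: the claimed inequality fails with left translation. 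The estimate you need is for \emph{right} translation $\tau_h u(x)=u(xh)$, for which $d(xh,x)=|x^{-1}\cdot xh|=|h|$ exactly. With that choice the averaging argument you sketch (insert an intermediate point $xz$ with $z$ averaged over $B_{|h|}(0)$, apply Jensen, change variables, and absorb the quasi-triangle constant) does yield $\|\tau_h u - u\|_{L^p}^p\le C|h|^{sp}[u]_{s,p}^p$, and since stratified groups are unimodular, right translations are $L^p$-isometries and the Kolmogorov--Riesz--Fr\'echet criterion applies to them. One terminological slip worth fixing: homogeneous quasi-norms on $\mathbb{G}$ are \emph{symmetric} by definition ($|x^{-1}|=|x|$); it is the triangle inequality, not symmetry, that is only ``quasi''.
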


 It was pointed out to us by the referee of this paper that there may be a relation between Theorem \ref{l-3} and the results in the recent paper \cite{BK22} combined with \cite{KYZ11} dealing the fractional Sobolev spaces defined on metric measure spaces satisfying various conditions (typically, but not always, a Poincar\'e inequality and doubling condition), see also \cite{Pio00} and \cite{GS22}. One such example of a metric measure space is a stratified Lie group. However, it is not completely clear how the result in \cite{BK22} applies to our spaces $X_0^{s,p}$ since the definition of this space is different. Therefore, for the benefit of readers, we include  a simple and direct proof of embedding theorems in Appendix A (Section \ref{appA}) which makes use of group structures such as the group translation and regularisation process via group convolution and dilations for this particular setting of stratified Lie groups. We follow the ideas of \cite{NPV12} to establish the continuous embedding whereas the compact embedding will be proved based on the idea originated by \cite{GL92}. We also refer \cite{AYY22, AWYY23} for embedding results on function spaces defined on spaces of homogeneous type.

	In this paper, we now  aim to apply Theorem \ref{l-3} to study the nonlinear Dirichlet eigenvalue problem on stratified Lie groups. The earliest known study of Dirichlet eigenvalue problems involving the $p$-Laplacian on $\mathbb{R}^N$ is due to Lindqvist \cite{L90}, where the author investigated the simplicity and isolatedness of the first eigenvalue of the following problem:
	\begin{align}\label{p-lind}		\Delta_{p} u&+\nu|u|^{p-2} u=0,~\text{in}~\Omega,\nonumber\\
		u&=0~\text{ on }~\partial\Omega.
	\end{align}

	Lindqvist further showed that the first eigenfunction of the problem \eqref{p-lind} is strictly positive on any arbitrary bounded domain $\Omega$. This study is directly related to the corresponding Rayleigh quotient of the energy given by the following expression:
	\begin{equation}
		\mathcal{R}(u):=\cfrac{\int_{\Omega}{|\nabla u|^p}dx}{\int_{\Omega}|u(x)|^p dx},~u\in C_c^{\infty}(\Omega).
	\end{equation}
	The nonlocal counterpart of the above problem \eqref{p-lind} was explored by Lindgren and Lindqvist  \cite{LL14}, and Franzina and Palatucci \cite{FP14}. After that, this topic received  an extensive attention. For instance, we cite  \cite{L05, AA87, BP16, LL14, FP14} just to mention a few of names toward the development of the eigenvalue problem.
	
	As per our knowledge, the study of eigenvalue problems for the subelliptic setting is very limited in the literature. The earliest traces of such studies are due to \cite{MS78, FP81}. Thereafter, there has been some progress in this direction involving the $p$-sub-Laplacian on the Heisenberg group, for instance, see \cite{HL08, WPH09}. Recently, there is an elevation of interest in the study of eigenvalue problems involving subelliptic operators on stratified Lie groups. We refer to \cite {CC21, HL08, FL10} and the references therein. 
	
	In this paper, we study the following nonlinear nonlocal Dirichlet eigenvalue problem involving the fractional $p$-sub-Laplacian on stratified Lie groups,
\begin{align} \label{pro1.3}
		(-\Delta_{p,{\mathbb{G}}})^s u&=\nu|u|^{p-2} u,~\text{in}~\Omega,\nonumber\\
		u&=0~\text{ in }~{\mathbb{G}}\setminus\Omega.
	\end{align}

In this direction, we first establish the existence of a minimizer for the Rayleigh quotient, namely, the existence of the first eigenfunction. Then,  similar to the classical case, we prove some important  properties of the first eigenfunction and the first eigenvalue of the problem \eqref{pro1.3}, which are listed below in the form of the following result.
\begin{theorem}\label{ev-mainthm}
	Let $0<s<1<p<\infty$ and let $\Omega\subset{\mathbb{G}}$ be a bounded domain of a stratified Lie group $\mathbb{G}$ of homogeneous dimension $Q$. Then for $Q>sp$, we have the following properties.
	\begin{enumerate}[label=(\roman*)]
		\item The first eigenfunction of the problem \eqref{pro1.3} is strictly positive.
		\item The first eigenvalue $\lambda_1$ of the problem \eqref{pro1.3} is simple and the corresponding eigenfunction $\phi_1$ is the only eigenfunction of constant sign, that is, if $u$ is an eigenfunction associated to an eigenvalue $\nu>\lambda_1(\Omega)$, then $u$ must be sign-changing.
		
		\item The first eigenvalue $\lambda_1$ of the problem \eqref{pro1.3} is isolated.
	\end{enumerate}	
\end{theorem}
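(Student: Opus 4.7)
\emph{Part (i).} By the direct method of the calculus of variations, combined with the compact embedding $X_0^{s,p}(\Omega) \hookrightarrow L^p(\Omega)$ provided by Theorem \ref{l-3}, the infimum
$$\lambda_1 \;=\; \inf\Bigl\{\|u\|_{X_0^{s,p}(\Omega)}^p : u\in X_0^{s,p}(\Omega),\ \|u\|_{L^p(\Omega)}=1\Bigr\}$$
is attained by some $\phi_1 \in X_0^{s,p}(\Omega)$. Since $\bigl||a|-|b|\bigr|\leq|a-b|$ pointwise, the Gagliardo-type double integral defining $\|\cdot\|_{X_0^{s,p}}$ satisfies $\||u|\|_{X_0^{s,p}} \leq \|u\|_{X_0^{s,p}}$; hence $|\phi_1|$ is also a minimizer and I may assume $\phi_1 \geq 0$. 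As a nonnegative weak solution of \eqref{pro1.3} with eigenvalue $\lambda_1$, the strong minimum principle for $(-\Delta_{p,\mathbb{G}})^s$ (which the introduction already states will be established in the paper) then forces $\phi_1 > 0$ a.e. in $\Omega$.

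\emph{Part (ii).} The central ingredient is a nonlocal Picone-type inequality: for positive measurable $u,v$ on $\mathbb{G}$,
$$|v(x)-v(y)|^{p-2}(v(x)-v(y))\!\left(\frac{u(x)^p}{v(x)^{p-1}}-\frac{u(y)^p}{v(y)^{p-1}}\right)\;\leq\;|u(x)-u(y)|^p,$$
with equality a.e.\ if and only if $u/v$ is constant. The inequality is purely algebraic and transfers verbatim to the stratified group setting once one integrates against the kernel $|y^{-1}x|^{-Q-sp}$. Let $\psi$ be any eigenfunction with eigenvalue $\nu\geq\lambda_1$; after the strong minimum principle I may suppose $\psi>0$. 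Inserting the regularised test function $\phi_1^p/(\psi+\varepsilon)^{p-1}$ into the weak formulation satisfied by $\psi$, applying Picone to bound the resulting double integral from above, and letting $\varepsilon\to 0$ via monotone/dominated convergence, I obtain $\nu\int_\Omega \phi_1^p \leq \|\phi_1\|_{X_0^{s,p}}^p = \lambda_1\int_\Omega \phi_1^p$. Hence $\nu\leq\lambda_1$, so $\nu=\lambda_1$; the equality case of Picone then yields $\phi_1 = c\psi$. This proves both the simplicity of $\lambda_1$ and the fact that every constant-sign eigenfunction must correspond to $\lambda_1$.

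\emph{Part (iii).} Argue by contradiction: suppose there exist eigenvalues $\nu_n \searrow \lambda_1$ with $\nu_n > \lambda_1$ and associated normalised eigenfunctions $u_n$ with $\|u_n\|_{L^p(\Omega)}=1$. Then $\|u_n\|_{X_0^{s,p}}^p = \nu_n$ is bounded, so by Theorem \ref{l-3} a subsequence converges strongly in $L^p(\Omega)$ and weakly in $X_0^{s,p}(\Omega)$ to a nontrivial eigenfunction $u$ of $\lambda_1$, which by (ii) is a scalar multiple of $\pm\phi_1$; in particular $|u|>0$ a.e. However, each $u_n$ must change sign by (ii), so both $\Omega_n^{\pm}:=\{\pm u_n>0\}$ have positive measure. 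Testing the equation for $u_n$ with $u_n^{\pm}\in X_0^{s,p}(\Omega)$, using the pointwise inequality
$$|a-b|^{p-2}(a-b)(a^{\pm}-b^{\pm})\;\geq\;|a^{\pm}-b^{\pm}|^p$$
to bound the left-hand side from below by $\|u_n^{\pm}\|_{X_0^{s,p}}^p$, and combining the Sobolev embedding of Theorem \ref{l-3} with H\"older's inequality, I deduce a uniform lower bound
$$|\Omega_n^{\pm}|\;\geq\;C\,\nu_n^{-Q/(sp)}.$$
Thus $|\Omega_n^{+}|$ and $|\Omega_n^{-}|$ are bounded below away from $0$, contradicting $u_n\to\pm\phi_1$ in $L^p(\Omega)$ with $\phi_1$ of constant sign.

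\emph{Principal obstacle.} The technically delicate point is the Picone argument in (ii): one must justify that $\phi_1^p/(\psi+\varepsilon)^{p-1}$ is an admissible test function in $X_0^{s,p}(\Omega)$ in the nonlocal stratified setting, and then interchange the limit $\varepsilon\to0$ with the double integral against the singular kernel $|y^{-1}x|^{-Q-sp}$. This requires truncations and smoothings that respect left translations and dilations on $\mathbb{G}$, together with careful dominated-convergence bookkeeping. The algebraic Picone identity and the inequalities used to control $u_n^{\pm}$ are insensitive to the underlying group structure and transfer directly from the Euclidean fractional $p$-Laplacian theory.
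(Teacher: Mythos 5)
Your overall blueprint — direct method plus the strong minimum principle for (i), a structural inequality linking two positive eigenfunctions for (ii), and a measure-lower-bound argument for the nodal domains for (iii) — is the same architecture the paper uses (Theorem~\ref{ev-thm1}, Theorem~\ref{min p}, Theorem~\ref{sign-changing}, Lemma~\ref{l-simple}, Lemma~\ref{ll-t17}, Lemma~\ref{isolated}). Parts (i) and (iii) of your argument essentially reproduce the paper's proofs. For (iii) in particular, your lower bound $|\Omega_n^{\pm}|\geq C\,\nu_n^{-Q/(sp)}$ is exactly the content of Lemma~\ref{ll-t17} via inequality~\eqref{dadaineq}, and the limsup-of-sets contradiction is the same closing move the paper uses.

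The genuine difference is in part (ii). You invoke the discrete fractional Picone inequality and test the equation for $\psi$ with the regularised quotient $\phi_1^p/(\psi+\varepsilon)^{p-1}$. The paper instead exploits the hidden convexity of $I$ along the curve $z(t)=\bigl((1-t)v^p+t u^p\bigr)^{1/p}$ (Theorem~\ref{sign-changing}) and the strict convexity of $B(l,m)=|l^{1/p}-m^{1/p}|^p$ from Lindgren–Lindqvist (Lemma~\ref{l-simple}). These two routes are closely related — the hidden-convexity inequality is essentially an integrated avatar of Picone — and both are standard in the fractional $p$-Laplacian literature; neither is intrinsically simpler here. Your route buys a slightly more unified treatment of ``$\nu=\lambda_1$'' and simplicity in one stroke, at the cost of the admissibility and limit-interchange bookkeeping you correctly flag. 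The paper's route avoids the singular test function entirely (the regularisation $u+\epsilon,v+\epsilon$ enters only through the curve $z(t,\epsilon)$) but needs the separate convexity lemma for simplicity.

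One point to tighten in (ii): you write ``Let $\psi$ be any eigenfunction with eigenvalue $\nu\geq\lambda_1$; after the strong minimum principle I may suppose $\psi>0$.'' The strong minimum principle (Theorem~\ref{min p}) applies to a \emph{nonnegative} supersolution that is not identically zero; it does not convert a sign-changing eigenfunction into a positive one. The correct logic (which is what the paper does) is: assume $\psi$ is of constant sign, replace $\psi$ by $-\psi$ if necessary so $\psi\geq 0$, then apply the minimum principle to get $\psi>0$, and only then run Picone. The contrapositive then gives that eigenfunctions for $\nu>\lambda_1$ must change sign, and the equality case in Picone (applied to two first eigenfunctions, each made strictly positive) gives simplicity. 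As currently phrased the claim is vacuously wrong for sign-changing $\psi$; the fix is a one-line reordering, but it matters because the whole point of (ii) is precisely to decide which eigenfunctions are of one sign.

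Finally, a small justification worth adding: to assume in the simplicity step that every first eigenfunction can be taken strictly positive, you need the equality case in $\bigl||u|\bigr|_{X_0^{s,p}}\leq\|u\|_{X_0^{s,p}}$ together with $|u|>0$ a.e.\ (from the minimum principle) to conclude that $u$ itself does not change sign. Both you and the paper leave this implicit.
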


Among the key ingredients to prove Theorem \ref{ev-mainthm} are a strong minimum principle (Theorem \ref{min p}) and  logarithmic estimates (Lemma \ref{k-log-lemma}).

Now, as a combined application of Theorem \ref{l-3} and Theorem \ref{ev-mainthm}, we turn our attention to the following problem  involving the fractional $p$-sub-Laplacian on the stratified Lie group ${\mathbb{G}}$:
\begin{align}\label{problem}
	\left(-\Delta_{p, {\mathbb{G}}}\right)^s u&=\frac{\lambda f(x)}{u^{\delta}}+g(x)u^{q}  \text { in } \Omega, \nonumber\\
	u&>0  \text { in } \Omega,\\
	u&=0  \text { in } {\mathbb{G}}\setminus\Omega, \nonumber
\end{align}
where $\Omega$ is a bounded  domain in ${\mathbb{G}}$, $\lambda>0$, $1<p<Q$, $0<s, \delta<1<p<q+1<p_s^{*}.$ Here $Q$ denotes the homogeneous dimension of ${\mathbb{G}},$ $p_s^{*}:=\frac{Q p}{Q-sp}$ denotes the critical Sobolev exponent, and $\left(-\Delta_{p, {\mathbb{G}}}\right)^s$ is the fractional $p$-sub-Laplacian ({\it ref.} Section \ref{s2}). The weight functions $f,g\in L^{\infty}(\Omega)$ are strictly positive.

 The problem of the type \eqref{problem} is usually referred to as the Brezis-Nirenberg type problem \cite{BN83} in the literature. Before we briefly recall some studies done in the Euclidean case, let us first discuss the motivation to consider Brezis-Nirenberg type problem on stratified Lie groups setting.  The primary motivation to investigate the Brezis-Nirenberg problem in the classical Euclidean setting (i.e., $p=2$ and $s=1$) comes from the fact that it resembles variation problems in differential geometry and physics. One such celebrated example is the  {\it Yamabe problem} on a Riemannian manifolds. There are many other examples that are directly related to the Brezis-Nirenberg problem; for example, existences of extremal functions for functional inequalities and existence of non-minimal solutions for Yang-Mills functions and $H$-system (see \cite{BN83}). The pioneering investigation of {\it CR Yamabe problem} was started by Jerison and Lee in their seminal work \cite{JL}. It is well-known that the Heisenberg group (simplest example of a stratified Lie group) plays the same role in the CR geometry as the Euclidean space in conformal geometry. Naturally, the analysis on stratified Lie groups proved to be a fundamental tool in the resolution of the CR Yamabe problem. Therefore, a great deal of interest has been shown to studying subelliptic PDEs on stratified Lie groups. Recently, several researchers have considered the    fractional CR Yamabe problem and problems around it; see \cite{GQ13,CW17,FMMT15, KMW17, K20, CHY21} and references therein. These aforementioned developments naturally encourage one for studying the Brezis-Nirenberg type problem \eqref{problem} on stratified Lie groups. Apart from this, it is also worth mentioning that the investigation of  problems of type \eqref{problem} is closely related to the existence of best constant in functional inequalities, e.g. see \cite{GU21} and references therein.

 On the other hand, it was noted in the celebrated paper \cite{RS76} by Rothschild and Stein that nilpotent Lie groups play an important role in deriving sharp subelliptic estimates for differential operators on manifolds. In view of the Rothschild-Stein lifting theorem, a general H\"ormander's sums of squares of vector fields on manifolds can be approximated by a  sup-Laplacian on some stratified Lie group (see also, \cite{F77} and \cite{Roth83}). This makes
it crucial to study  partial differential equations on stratified Lie groups and led to several interesting and promising works amalgamating the Lie group theory with the  analysis on partial differential equations. Moreover,  in recent decades, there is a rapidly growing interest for sub-Laplacians on stratified Lie groups because
these operators appear not only in theoretical settings (see e.g. Gromov \cite{Gro} or Danielli,
Garofalo and Nhieu \cite{DGN07} for general expositions from different points of view), but also in application settings such as mathematical models of crystal material and human vision (see,  \cite{C98} and \cite{CGS04}).

It is almost impossible to enlist all such studies dealing with existence, multiplicity and regularity of solutions but we will mention some of the pivotal studies that motivated us to  consider this problem \eqref{problem} in the subelliptic setting on stratified Lie groups. These studies  are primarily divided into two cases, namely, $\lambda=0$ and $\lambda>0$. For $\lambda>0$, $g=0$, $p=2$, $s=1$ and $\delta>0$, i.e., the purely singular problem involving Laplacian  was initially tossed up by Crandall et al. \cite{CRT77} following a pseudo-plastic model in the bounded domain $\Omega\subset\mathbb{R}^N$ with the Dirichlet boundary condition. Moving forward with the same setting, Lazer and McKenna \cite{LM91} observed that one can expect a $W_0^{1,2}(\Omega)$ solution if and only if $0<\delta<3.$ Later, in \cite{YD13} it was proved that the exponent $\delta=3$ proposed in \cite{LM91} is optimal to obtain a $W_0^{1,2}(\Omega)$ solution. The nonlocal counterparts of these type of PDEs were studied in Canino et al. \cite{CMSS17} for all $p\in(1,\infty)$ and $s\in(0,1)$. For further references on the study of purely singular problems we refer to \cite{BO09, CMSS17} and the references therein. It is noteworthy to mention that the problem \eqref{problem} with $g=0$ always possess a unique solution for $\lambda,\delta>0$. The study of the multiplicity and regularity of solutions was widely considered for $\lambda\geq0,$ see \cite{AF17,BS15, DP97,HSS08,MS16} and the references therein.

We now emphasize  the study of the existence of solutions to PDEs associated with subelliptic operators.  In \cite{ALT20}, the authors established the existence of solutions to the following problem involving the sub-Laplacian on the Heisenberg group:
\begin{align}
	-\Delta_{\mathbb{H}^N} u&=\frac{\lambda f(x)}{u^{\delta}} \text { in } \Omega, \nonumber\\
	u&>0  \text { in } \Omega,\\
	u&=0  \text { in } \partial\Omega, \nonumber
\end{align}
They applied the fixed point theorem argument and a weak convergence method to deduce existence of solutions. The nonlinear extension of the aforementioned problem, that is, the singular problem with the $p$-sub-Laplacian was investigated in \cite{GU21} in the setting of stratified Lie groups for $0<\delta<1$. In both of these studies, the authors used the weak convergence method to establish the existence of solution.
In \cite{WW18}, the author considered subelliptic problem associated with sub-Laplacian on the Heisenberg group with mixed singular and power type nonlinearity. They established the existence  of solution using the moving plane method. The authors \cite{YY12} extended this to the Carnot groups. In \cite{H20}, Han studied existence and nonexistence results for positive solutions to an integral type Brezis-Nirenberg type problem on the Heisenberg groups. Ruzhansky et al. \cite{RST22} established the global existence and blow-up of the positive solutions to a nonlinear porous medium equation over stratified Lie groups. In \cite{BPV22} the authors characterised the existence of unique positive weak solution  for subelliptic Dirichlet problems. A few more results dealing with the existence and multiplicity of solutions over the Heisenberg groups and stratified Lie groups  can be found in \cite{P19, PT22, BFP20a, BR17, FBR17, L19, L07,PT21,L15,FF15} and references listed therein.  Finally, we cite \cite{CKKSS22, EGKSS22} and references therein for the study of non-homogeneous fractional $p$-Laplacian on metric measure spaces. The study of existence and multiplicity of weak solutions mainly uses the variational tools, such as mountain pass theorem, Nehari manifold techniques, etc. 

In this study we employ the Nehari manifold method \cite{ST10, HSS04} to establish the multiplicity of solutions to the problem \eqref{problem}. The result is stated as follows.

\begin{theorem}\label{main-thm}
	Let $\Omega$ be a bounded  domain of a stratified Lie group ${\mathbb{G}}$ of homogeneous dimension $Q$, and let $0<s, \delta<1<p<q+1<p_s^{*}:=\frac{Qp}{Q-ps}$, $Q>ps$. Then there exists $\Lambda>0$ such that for all $\lambda \in\left(0, \Lambda\right)$ the problem \eqref{problem} admits at least two non-negative solutions in $X_0^{s,p}(\Omega)$.
\end{theorem}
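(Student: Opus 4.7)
The plan is to apply the Nehari manifold method adapted to singular problems. Define the energy functional
\begin{equation*}
J_\lambda(u)=\frac{1}{p}\|u\|_{X_0^{s,p}(\Omega)}^p-\frac{\lambda}{1-\delta}\int_\Omega f(x)|u|^{1-\delta}\,dx-\frac{1}{q+1}\int_\Omega g(x)|u|^{q+1}\,dx,
\end{equation*}
which is continuous on $X_0^{s,p}(\Omega)$ but only admits one-sided directional derivatives along rays because of the singular term. The fibering analysis nonetheless applies: for each $u\neq 0$, study $\phi_u(t)=J_\lambda(tu)$ for $t>0$ and split the Nehari set $\mathcal{N}_\lambda=\{u\neq 0:\phi_u'(1)=0\}$ into $\mathcal{N}_\lambda^{+},\mathcal{N}_\lambda^{-},\mathcal{N}_\lambda^{0}$ according to the sign of $\phi_u''(1)$.

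First, I would derive a threshold $\Lambda>0$ such that for every $\lambda\in(0,\Lambda)$ one has $\mathcal{N}_\lambda^{0}=\emptyset$ and every fibering map has exactly two positive critical points: a local minimum lying in $\mathcal{N}_\lambda^{+}$ and a local maximum lying in $\mathcal{N}_\lambda^{-}$. This is obtained by maximising the auxiliary map $\psi_u(t)=t^{p-1+\delta}\|u\|^p-t^{q+\delta}\int_\Omega g(x)|u|^{q+1}\,dx$ and comparing its peak with $\lambda\int_\Omega f(x)|u|^{1-\delta}\,dx$; the continuous embedding in Theorem \ref{l-3} provides the necessary control of the $L^{1-\delta}$ and $L^{q+1}$ norms by $\|u\|_{X_0^{s,p}(\Omega)}$.

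For $u\in\mathcal{N}_\lambda$ the Nehari identity lets me rewrite
\begin{equation*}
J_\lambda(u)=\left(\frac{1}{p}-\frac{1}{q+1}\right)\|u\|^p-\lambda\left(\frac{1}{1-\delta}-\frac{1}{q+1}\right)\int_\Omega f(x)|u|^{1-\delta}\,dx,
\end{equation*}
which together with $1-\delta<p<q+1$ shows that $J_\lambda$ is coercive and bounded below on $\mathcal{N}_\lambda$. Taking minimising sequences in $\mathcal{N}_\lambda^{+}$ and $\mathcal{N}_\lambda^{-}$, coercivity produces weakly convergent subsequences in $X_0^{s,p}(\Omega)$; the compact embedding $X_0^{s,p}(\Omega)\hookrightarrow L^r(\Omega)$ for $r<p_s^{*}$ from Theorem \ref{l-3}, applied at $r=1-\delta$ and $r=q+1$, passes the subcritical integrals to the limit. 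Using the Nehari identity together with $\mathcal{N}_\lambda^{0}=\emptyset$, weak convergence upgrades to strong convergence of norms, yielding minimisers $u_1\in\mathcal{N}_\lambda^{+}$ and $u_2\in\mathcal{N}_\lambda^{-}$. Since $\mathcal{N}_\lambda^{+}\cap\mathcal{N}_\lambda^{-}=\emptyset$, the two are distinct; replacing $u_i$ by $|u_i|$ leaves the Nehari infima unchanged, so both minimisers may be assumed non-negative.

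The main obstacle is translating each constrained minimiser into a weak solution of \eqref{problem} in the presence of the $u^{-\delta}$ singularity, which destroys smoothness of $J_\lambda$. I would handle this by a one-sided perturbation argument: for every non-negative $\varphi\in X_0^{s,p}(\Omega)$ and small $\epsilon>0$, rescale $u_i+\epsilon\varphi$ back onto $\mathcal{N}_\lambda$, apply minimality to get $J_\lambda(u_i+\epsilon\varphi)\geq J_\lambda(u_i)$, divide by $\epsilon$, and invoke Fatou's lemma as $\epsilon\to 0^{+}$. Combining this one-sided inequality with the Nehari identity tested against $u_i$ itself, and decomposing general test functions via $\varphi=\varphi^{+}-\varphi^{-}$, produces the Euler-Lagrange equation. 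The strong minimum principle (Theorem \ref{min p}) then upgrades $u_i\geq 0$ to $u_i>0$ in $\Omega$. A subtle point, typically resolved by a pointwise comparison of $u_i$ from below with a positive multiple of the first eigenfunction $\phi_1$ from Theorem \ref{ev-mainthm}, is ensuring that $f(x)\varphi/u_i^\delta\in L^1(\Omega)$ so that the Fatou step is rigorously justified.
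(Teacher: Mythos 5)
You follow the same Nehari-manifold architecture as the paper: identical energy functional, identical splitting into $\mathcal{N}_\lambda^{+},\mathcal{N}_\lambda^{-},\mathcal{N}_\lambda^{0}$, the same fibering-map maximisation to produce a threshold, the same coercivity identity on $\mathcal{N}_\lambda$, and minimising-sequence plus compact-embedding arguments to obtain one minimiser in each of $\mathcal{N}_\lambda^{\pm}$ with $\mathcal{N}_\lambda^{+}\cap\mathcal{N}_\lambda^{-}=\emptyset$ giving distinctness. Where you genuinely diverge is the step that converts a constrained minimiser into a weak solution of \eqref{problem}. The paper's Lemma \ref{l-crit} introduces $J_\lambda(u)=\langle I_\lambda'(u),u\rangle$ and invokes the Lagrange-multiplier rule $I_\lambda'(u)=\kappa J_\lambda'(u)$, then uses $\langle I_\lambda'(u),u\rangle=0$ together with $\phi_u''(1)\ne 0$ to force $\kappa=0$. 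This treats $I_\lambda$ as if it were $C^1$, even though the paper itself notes that $I_\lambda$ fails to be Fr\'echet differentiable because of the singular term $|u|^{1-\delta}$; the argument is therefore somewhat formal. You instead propose the one-sided perturbation plus Fatou route (in the spirit of the Hirano--Saccon--Shioji paper that the paper cites for Lemma \ref{l-hirano}): test with $u_i+\epsilon\varphi$, use the Nehari-projection and minimality to obtain a one-sided inequality, divide by $\epsilon$, and pass to the limit $\epsilon\to 0^{+}$ via Fatou, then decompose $\varphi=\varphi^{+}-\varphi^{-}$. This is the more standard and more rigorous treatment of the singularity, and you correctly flag the accompanying obligation to verify $f\varphi/u_i^{\delta}\in L^1(\Omega)$ by producing a positive lower bound for $u_i$ on compact subsets (via the strong minimum principle, Theorem \ref{min p}, and comparison with $\phi_1$). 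Two small caveats: your sketch does not distinguish how the perturbation-rescaling argument must be adapted on $\mathcal{N}_\lambda^{-}$, where $t=1$ is a fibering \emph{maximum} rather than a minimum, so the one-sided inequality has to be set up the other way; and the line ``apply minimality to get $J_\lambda(u_i+\epsilon\varphi)\geq J_\lambda(u_i)$'' should really compare $J_\lambda$ at the Nehari projection of $u_i+\epsilon\varphi$, not at $u_i+\epsilon\varphi$ itself. Neither is a gap in the plan, just places where the sketch glosses over details. Overall the plan is sound, reaches the same conclusion as the paper, and is arguably cleaner on the critical singular step than the Lagrange-multiplier argument the paper actually uses.
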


 Let us make a few more comments on results of this paper before concluding the introduction. In this paper, our main focus is to study subelliptic eigenvalue problem and  the Brezis-Nirenberg type problem on stratified Lie groups. But, we emphasise that  the proofs and statements of Theorem \ref{ev-mainthm} and Theorem \ref{main-thm} can easily be adopted with suitable modifications in the case of metric measure space, at least, in  case when the metric measure space is doubling and satisfies a Poincar\'e inequality. 

The paper is organized as follows: In the next section we present basics of the analysis on stratified Lie groups along with function spaces defined on them.  In Section \ref{sec4}, we study the fractional $p$-sub-Laplacian eigenvalue problem on stratified Lie groups. The existence of (at least) two solutions of the nonlocal singular problem by using Nehari manifold technique is analysed in Section \ref{sec5}. The last section consists of showing the boundedness of solution by employing the Moser iteration followed by a comparison principle.  
\section{Preliminaries: Stratified Lie groups and  Sobolev spaces}\label{s2}

This section is devoted to recapitulating 
some basic notations and concepts related to stratified Lie groups and the fractional Sobolev spaces defined on them.  There are many ways to introduce the notion of stratified Lie groups, for instance one may refer to  books and monographs \cite{FS82,BLU07,FR16,RS19}. In his seminal paper \cite{F75}, Folland extensively investigated the  properties of function spaces on these groups. The monographs \cite{FR16} deals with the theory associated to higher order invariant operators, namely, the Rockland operators on graded Lie groups. For precise studies and properties on stratified Lie group, we refer \cite{GL92, F75, FS82, BLU07, FR16}.

\begin{definition}\label{d1}
	A Lie group $\mathbb{G}$ (on $\mathbb{R}^N$) is said to be homogeneous if, for each $\lambda>0$, there exists an automorphism $D_{\lambda}:\mathbb{G} \rightarrow\mathbb{G}$  defined by $D_{\lambda}(x)=(\lambda^{r_1}x_1, \lambda^{r_2}x_2,..., \lambda^{r_N}x_N)$ for $r_i>0,\,\forall\, i=1,2,...,N$. The map $D_\lambda$ is called a {\it dilation} on $\mathbb{G}$. 
\end{definition}
For simplicity, we sometimes prefer to use the notation $\lambda x$ to denote the dilation $D_{\lambda}x$. Note that, if $\lambda x$ is a dilation then $\lambda^r x$ is also a dilation. The number $Q=r_1+r_2+...+r_N$ is called the homogeneous dimension of the homogeneous  Lie group $\mathbb{G}$ and the natural number $N$ represents the topological dimension of $\mathbb{G}.$ The Haar measure on $\mathbb{G}$ is denoted by $dx$ and it is nothing but the usual Lebesgue measure on $\mathbb{R}^N.$

\begin{definition}\label{d-carnot}
	A homogeneous Lie group $\mathbb{G} = (\mathbb{R}^N, \circ)$ is called a stratified Lie group (or a homogeneous Carnot group) if the following two conditions are fulfilled:
	\begin{enumerate}[label=(\roman*)]
		\item For some natural numbers $N_1 + N_2+ ... + N_k = N$ the decomposition $\mathbb{R}^N = \mathbb{R}^{N_1} \times \mathbb{R}^{N_2} \times ... \times \mathbb{R}^{N_k}$ holds, and for each $\lambda > 0$ there exists a dilation of the form $D_{\lambda}(x)=  (\lambda x^{(1)}, \lambda^2 x^{(2)},..., \lambda^{k}x^{(k)})$ which is an automorphism of the group $\mathbb{G}$. Here $x^{(i)}\in \mathbb{R}^{N_i}$ for $i = 1,2, ..., k$.
		
		\item With $N_1$ the same as in the above decomposition of $\mathbb{R}^N$, let $X_1, ..., X_{N_1}$ be the left invariant vector fields on $\mathbb{G}$ such that $X_k(0) = \frac{\partial}{\partial x_k}|_0$ for $k = 1, ..., N_1$. Then the H\"{o}rmander condition $rank(Lie\{X_1, ..., X_{N_1} \}) = N$
		holds for every $x \in \mathbb{R}^{N}$. In other words, the Lie algebra corresponding to the Lie group $\mathbb{G}$ is spanned by the iterated commutators of $X_1, ..., X_{N_1}$.
	\end{enumerate}
\end{definition}

Here $k$ is called the step of the homogeneous Carnot group.  Note that, in the case of stratified Lie groups, the homogeneous dimension becomes $Q=\sum_{i=1}^{i=k}iN_i$. Furthermore, the left-invariant vector fields $X_j$ satisfy the divergence theorem and they can be written explicitly as 
\begin{equation}\label{d-vec fld}
	X_i=\frac{\partial}{\partial x_i^{(1)}} + \sum_{j=2}^{k}\sum_{l=1}^{N_1} a^{(j)}_{i,l}(x^1, x^2, ..., x^{j-1})\frac{\partial}{\partial x_l^{(j)}}.
\end{equation}

For simplicity, we set $n=N_1$ in  the above Definition \ref{d-carnot}. 

An absolutely continuous curve $\gamma:[0,1]\rightarrow\mathbb{R}$ is said to be admissible, if there exist functions $c_i:[0,1]:\rightarrow\mathbb{R}$, for $i=1,2...,n$ such that
$${\dot{\gamma}(t)}=\sum_{i=1}^{i=n}c_i(t)X_i(\gamma(t))~\text{and}~ \sum_{i=1}^{i=n}c_i(t)^2\leq1.$$
Observe that the functions $c_i$ may not be unique as the vector fields $X_i$ may not be linearly independent. For any $x,y\in\mathbb{G}$ the Carnot-Carath\'{e}odory distance is defined as
$$\rho_{cc}(x,y)=\inf\{l>0: ~\text{there exists an admissible}~ \gamma:[0,l]\rightarrow\mathbb{G} ~\text{with}~ \gamma(0)=x ~\text{\&}~ \gamma(l)=y \}.$$
We define $\rho_{cc}(x,y)=0$, if no such curve exists. This $\rho_{cc}$ is not a metric in general but the H\"{o}rmander condition for the vector fields $X_1,X_2,...X_{N_1}$ ensures that $\rho_{cc}$ is a metric. The space $(\mathbb{G}, \rho_{cc})$ is is known as a Carnot-Carath\'{e}odory space.

%The distance of an element $\xi\in\mathbb{G}$ is given by $\rho=dist(\xi,0)=\left((|x|^2+|y|^2)^2+t^2\right)^{\frac{1}{4}}$. This distance also induces the norm (known as Koranyi norm) on the Heisenberg group $\mathbb{G}$.

Let us now define the quasi-norm on the homogeneous Carnot group $\mathbb{G}$.
\begin{definition}\label{d-quasi-norm}
	A continuous function $|\cdot|: \mathbb{G} \rightarrow \mathbb{R}^{+}$ is said to be a homogeneous quasi-norm on a homogeneous Lie group $\mathbb{G}$ if it satisfies the following conditions:
	\begin{enumerate}[label=(\roman*)]
		
		\item (definiteness): $|x| = 0$ if and only if $x = 0$.
		\item (symmetric): $|x^{-1}| = |x|$ for all $x \in\mathbb{G}$, and 
		\item ($1$-homogeneous): $|\lambda x| = \lambda |x|$ for all $x \in\mathbb{G}$ and $\lambda>0$.
		
	\end{enumerate}
\end{definition}

An example of a quasi-norm on $\mathbb{G}$ is the norm defined as $d(x):=\rho_{cc}(x, 0),\,\, x\in \mathbb{G}$, where $\rho$ is a Carnot-Carath\'{e}odory distance related to H\"ormander vector fields on $\mathbb{G}.$ It is known that all homogeneous quasi-norms are equivalent on $\mathbb{G}.$ In this paper we will work with a left-invariant homogeneous distance $d(x, y):=|y^{-1} \circ x|$ for all $x, y \in \mathbb{G}$ induced by the homogeneous quasi-norm of $\mathbb{G}.$

The sub-Laplacian (or Horizontal Laplacian)  on $\mathbb{G}$ is defined as
\begin{equation}\label{d-sub-lap}
	\mathcal{L}:=X_{1}^{2}+\cdots+X_{N_1}^{2}.
\end{equation}
The horizontal gradient on $\mathbb{G}$ is defined as
\begin{equation}\label{d-h-grad}
	\nabla_{\mathbb{G}}:=\left(X_{1}, X_{2}, \cdots, X_{N_1}\right).
\end{equation}
The horizontal divergence on $\mathbb{G}$ is defined by
\begin{equation}\label{d-h-div}
	\operatorname{div}_{\mathbb{G}} v:=\nabla_{\mathbb{G}} \cdot v.
\end{equation}
For $p \in(1,+\infty)$, we define the $p$-sub-Laplacian on the stratified Lie group $\mathbb{G}$ as
\begin{equation}\label{d-p-sub}
	\Delta_{\mathbb{G},p} u:=\operatorname{div}_{\mathbb{G}}\left(\left|\nabla_{\mathbb{G}} u\right|^{p-2} \nabla_{\mathbb{G}} u\right).
\end{equation}

Let $\Omega$ be a Haar measurable subset of $\mathbb{G}$. Then $\mu(D_{\lambda}(\Omega))=\lambda^{Q}\mu(\Omega)$ where $\mu(\Omega)$ is the Haar measure of $\Omega$. The quasi-ball of radius $r$ centered at $x\in\mathbb{G}$ with respect to the quasi-norm $|\cdot|$ is defined as
\begin{equation}\label{d-ball}
	B(x, r)=\left\{y \in \mathbb{G}: \left|y^{-1} \circ x\right|<r\right\}.
\end{equation}
Observe that $B(x, r)$ can be obtained by the left-translation by $x$ of the ball $B(0, r)$. Furthermore, $B(0, r)$ is the image under the dilation $D_{r}$ of $B(0,1)$. Thus, we have $\mu(B(x,r))=r^Q$ for all $x\in \mathbb{G}$.

%Consider the unit quasi-sphere defined by
%\begin{equation}\label{d-sphere}
%	\mathfrak{S}(0):=\{x \in \mathbb{G}: |x|=1\}.
%\end{equation}

%Then there is a unique positive Borel measure $\sigma$ on the quasi-sphere $\mathfrak{S}(0)$ such that for all $f \in L^{1}(\mathbb{G})$, we have
%\begin{equation}\label{d-polar}
%	\int_{\mathbb{G}} f(x) d x=\int_{0}^{\infty} \int_{\mathfrak{S}(0)} f(r y) r^{Q-1} d\sigma(y) dr.
%\end{equation}

We are now in a position to define the notion of fractional Sobolev-Folland-Stein type spaces related to our study.

Let $\Omega \subset {\mathbb{G}}$ be an open subset. Then for $0<s<1< p<\infty$, the fractional Sobolev space $W^{s,p}(\Omega)$ on stratified groups is defined as 

\begin{equation}
	W^{s,p}(\Omega)=\{u\in L^{p}(\Omega): [u]_{s, p,\Omega}<\infty\},
\end{equation}
endowed with the norm 
\begin{equation}
	\|u\|_{W^{s,p}(\Omega)}=\|u\|_{L^p(\Omega)}+[u]_{s,p,\Omega},
\end{equation}
where $[u]_{s, p,\Omega}$ denotes the Gagliardo semi-norm defined by
\begin{equation}
	[u]_{s, p,\Omega}:=\left(\int_{\Omega} \int_{\Omega} \frac{|u(x)-u(y)|^{p}}{\left|y^{-1} x\right|^{Q+ps}} dxdy\right)^{\frac{1}{p}}<\infty.
\end{equation}

Observe that for all $\phi \in C_c^{\infty}(\Omega)$, we have $[u]_{s, p,\Omega}<\infty$. We define the space  $W_0^{s,p}(\Omega)$ as the closure of $C_c^{\infty}(\Omega)$ with respect to the  norm $\|u\|_{W^{s,p}(\Omega)}$. We would like to point out that $W_0^{s,p}(\mathbb{G})=W^{s,p}(\mathbb{G})$.

Now for an open bounded subset $\Omega \subset {\mathbb{G}}$, define the space $X_0^{s,p}(\Omega)$  as the closure of $C_c^{\infty}(\Omega)$ with respect to the  norm $\|u\|_{L^p(\Omega)}+[u]_{s, p,\mathbb{G}}$. Note that the spaces $X_0^{s,p}(\Omega)$ and $W_0^{s,p}(\Omega)$ are different even in the Euclidean case unless $\Omega$ is an extension domain (see \cite{NPV12}). 
\begin{lemma}\label{l-2}
The space $X_0^{s,p}(\Omega)$ is a  reflexive Banach space for $1< p<\infty$.
\end{lemma}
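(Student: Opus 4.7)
The plan is to exhibit $X_0^{s,p}(\Omega)$ as isometrically isomorphic to a closed subspace of a reflexive Banach space, from which both claims follow at once: closed subspaces of reflexive Banach spaces are reflexive, and any space linearly isometric to a Banach space is itself Banach.

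Concretely, I would introduce the weighted product measure $d\mu(x,y) := |y^{-1} x|^{-(Q+ps)}\, dx\, dy$ on $\mathbb{G} \times \mathbb{G}$ and the product Banach space
$$Y := L^p(\Omega) \oplus L^p\bigl(\mathbb{G} \times \mathbb{G},\, d\mu\bigr), \qquad \|(f,F)\|_Y := \|f\|_{L^p(\Omega)} + \|F\|_{L^p(d\mu)}.$$
For $u \in C_c^\infty(\Omega)$, letting $\tilde{u}$ denote the extension of $u$ by zero from $\Omega$ to $\mathbb{G}$, define the linear map
$$T(u) := \bigl(u,\ (x,y) \mapsto \tilde{u}(x) - \tilde{u}(y)\bigr) \in Y.$$
A direct application of Fubini gives $\|T(u)\|_Y = \|u\|_{L^p(\Omega)} + [\tilde{u}]_{s,p,\mathbb{G}} = \|u\|_{X_0^{s,p}(\Omega)}$, so $T$ is a linear isometry on $C_c^\infty(\Omega)$. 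Since $1<p<\infty$, each $L^p$ factor in $Y$ is reflexive, and hence the finite direct sum $Y$ is reflexive.

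Next, because $T$ is a linear isometry on the dense subspace $C_c^\infty(\Omega)$, it extends uniquely and continuously to an isometric embedding $T : X_0^{s,p}(\Omega) \to Y$ on the completion $X_0^{s,p}(\Omega)$ of $C_c^\infty(\Omega)$ under $\|\cdot\|_{X_0^{s,p}}$. As a completion, $X_0^{s,p}(\Omega)$ is automatically Banach. Its image $T(X_0^{s,p}(\Omega))$ is complete in the norm inherited from $Y$ (being isometric to a Banach space), hence closed in $Y$. A closed subspace of a reflexive Banach space is reflexive, so $T(X_0^{s,p}(\Omega))$ is reflexive, and transporting reflexivity back through the isometry $T$ yields that $X_0^{s,p}(\Omega)$ is reflexive.

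Essentially nothing in this plan is technically hard: once the isometric embedding is written down, standard functional-analytic machinery closes the argument. The only mildly subtle point is justifying that the isometry $T$ extends to all of $X_0^{s,p}(\Omega)$ and remains an isometry there, which is just the universal property of completions combined with the continuity of $T$ on the dense subspace $C_c^\infty(\Omega)$.
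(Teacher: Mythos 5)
Your proof is correct, and the argument is the canonical one for this type of statement. The paper itself states Lemma~\ref{l-2} without proof, presumably treating it as standard, so there is no authorial argument to compare against. What you supply is exactly the expected filler: realise $X_0^{s,p}(\Omega)$ as (isometric to) a closed subspace of $L^p(\Omega)\oplus L^p(\mathbb{G}\times\mathbb{G},\,|y^{-1}x|^{-(Q+ps)}\,dxdy)$, invoke reflexivity of $L^p$ for $1<p<\infty$ and of finite direct sums, use that closed subspaces of reflexive spaces are reflexive, and transport back through the isometric isomorphism onto the image. The only cosmetic point worth flagging: the Fubini invocation is unnecessary, since $\|\tilde u(x)-\tilde u(y)\|_{L^p(d\mu)}=[\tilde u]_{s,p,\mathbb{G}}$ is immediate from the definition of the Gagliardo seminorm. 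One optional strengthening: if you instead equip $Y$ with the $\ell^p$ (rather than $\ell^1$) combination of the two norms, $Y$ becomes uniformly convex, and the same embedding then shows $X_0^{s,p}(\Omega)$ (in the equivalent $\ell^p$-combined norm) is uniformly convex as well -- a fact the paper implicitly appeals to later in the proof of Theorem~\ref{ev-thm1}. Your version is fully adequate for the stated lemma, since reflexivity is independent of the choice of equivalent norm.
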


The space $X_0^{s,p}(\Omega)$ can be equivalently defined with respect to the homogeneous norm $[u]_{s, p,\mathbb{G}}$. Indeed, for $u\in C_c^{\infty}(\Omega)$ and $B_r\subset {\mathbb{G}}\setminus\Omega$, we have 
\begin{equation}
		\left|u(x)\right|^{p}= {|y^{-1}x|^{Q+ps}}  \frac{\left|u(x)-u(y)\right|^{p}}{|y^{-1}x|^{Q+ps}}
	\end{equation}
	for all $x\in \Omega$ and $y\in B_r$. Integrating  first with respect to $y$ and then with respect to $x$, we obtain,
	\begin{equation}
		\int_{\Omega}\left|u(x)\right|^{p}dx\leq \frac{diam(\Omega\cup B_r)^{Q+ps}}{|B_r|} \int_\Omega\int_{B_r} \frac{\left|u(x)-u(y)\right|^{p}}{|y^{-1}x|^{Q+ps}}dxdy.
	\end{equation}
	Now define $$C=C(Q,s,p,\Omega)=\inf\Big\{\frac{diam(\Omega\cup B)^{Q+ps}}{|B|}: B\subset {\mathbb{G}}\setminus\Omega~\text{is a ball}\Big\}.$$
	Then we have the following Poincar\'{e} type inequality,
	\begin{equation}\label{poin}
		\|u\|_{L^p(\Omega)}^p\leq C[u]_{s, p,\mathbb{G}}^p.
	\end{equation}
	
	This confirms that the space $X_0^{s,p}(\Omega)$ can be defined as a closure of $C_c^{\infty}(\Omega)$ with respect to the homogeneous norm $[u]_{s,p, \mathbb{G}}$. That is $$\|u\|_{X_0^{s,p}(\Omega)}\cong[u]_{s, p,\mathbb{G}}~\text{for all}~u\in X_0^{s,p}(\Omega).$$
	
	Moreover, the construction of the space $X_0^{s,p}(\Omega)$ suggests that by assigning $u=0$ in ${\mathbb{G}}\setminus\Omega$ for $u\in X_0^{s,p}(\Omega)$, we conclude that the inclusion map $i:X_0^{s,p}(\Omega)\rightarrow {W}^{s,p}(\mathbb{G})$ is well-defined and continuous.

Note that, in general $X_0^{s,p}(\Omega)\subset\{u\in W^{s, p}({\mathbb{G}}): u=0~\text{in}~{\mathbb{G}}\setminus\Omega \}$. From the equivalence of the norms and being the closure of $C_c^{\infty}(\Omega)$ with respect to the norm $\|\cdot\|_{L^p(\Omega)}+ [u]_{s, p,\mathbb{G}}$, we can represent $X_0^{s,p}(\Omega)$ as follows:  $$X_0^{s,p}(\Omega)=\{u\in W^{s, p}({\mathbb{G}}): u=0~\text{in}~{\mathbb{G}}\setminus\Omega \},$$ 
whenever $\Omega$ is bounded with at least continuous boundary $\partial\Omega$. For $u\in X_0^{s,p}(\Omega)$,
\begin{align*}
    [u]_{s, p,\mathbb{G}}&=\iint_{\mathbb{G} \times \mathbb{G}} \frac{|u(x)-u(y)|^{p}}{\left|y^{-1} x\right|^{Q+p s}} dxdy=\iint_{\mathbb{G} \times \mathbb{G}\setminus (\Omega^c\times\Omega^c)} \frac{|u(x)-u(y)|^{p}}{\left|y^{-1} x\right|^{Q+p s}} dxdy.
\end{align*}

%Henceforth, for the sake of simplicity, we will use the following notations unless otherwise mentioned: $$X_0:=X_0^{s,p}(\Omega);~\|u\|:=\|u\|_{X_0^{s,p}(\Omega)};~[u]_{s, p}:=[u]_{s, p,\mathbb{G}}~\text{ and}~\|u\|_{L^p(\Omega)}:=\|u\|_p.$$

%Let us now define the meaning of $u=0$ on $\partial\Omega$ in $X_0$.
%\begin{definition}
%	Assume $u = 0$ in ${\mathbb{G}}\setminus\Omega$. Then, we define $u\leq 0$  on $\partial\Omega$, if $(u-\epsilon)_+\in X_0$ for all $\epsilon>0$. We say $u=0$ on $\partial\Omega$ if $u\geq0$ as well as $u\leq0$ on $\partial\Omega$.
%\end{definition}
We conclude this section with the following two definitions. For $s\in(0,1)$ and $p\in(1,\infty)$, we define the fractional $p$-sub-Laplacian  as 
\begin{equation}
	\left(-\Delta_{p,{\mathbb{G}}}\right)^s u(x):=C_{Q,s, p}\,\,  P.V. \int_{{\mathbb{G}} } \frac{|u(x)-u(y)|^{p-2}(u(x)-u(y))}{\left|y^{-1} x\right|^{Q+p s}} dy, \quad x \in {\mathbb{G}}.
\end{equation}

For any $\varphi\in X_0^{s,p}(\Omega)$, we have

\begin{equation}
	\langle\left(-\Delta_{p,{\mathbb{G}}}\right)^s u,\varphi\rangle= \iint_{\mathbb{G} \times \mathbb{G}} \frac{|u(x)-u(y)|^{p-2}(u(x)-u(y))(\varphi(x)-\varphi(y))}{\left|y^{-1} x\right|^{Q+p s}} dxdy.
\end{equation}

The simplest example of a stratified Lie group is the Heisenberg group $\mathbb{H}^N$ with the underlying manifold $\mathbb{R}^{2N+1}:=\mathbb{R}^N\times\mathbb{R}^N\times\mathbb{R}$ for $N\in \mathbb{N}$. For $(x, y, t),(x', y', t')\in \mathbb{H}^N$ the multiplication in $\mathbb{H}^N$ is given by

\begin{equation*}
	(x, y, t) \circ(x', y', t')=(x+x', y+y', t+t'+2 (\langle x',y\rangle)-\langle x,y'\rangle),
\end{equation*}
where $(x, y, t), (x', y', t') \in \mathbb{R}^{N}\times \mathbb{R}^{N}\times \mathbb{R}$ and $\langle\cdot,\cdot\rangle$ represents the inner product on $\mathbb{R}^N$. The homogeneous structure of  the Heisenberg group $\mathbb{H}^N$ is provided by the following dilation, for $\lambda>0,$
\begin{equation*}
	D_{\lambda}(x,y,t)=(\lambda x, \lambda y, \lambda^2 t).
\end{equation*}
the  homogeneous dimension $Q$ of $\mathbb{H}^N$ is given by $2N+2:= N+N+2$ while the topological dimension of $\mathbb{H}^N$ is $2N+1.$
The left-invariant vector fields $\{X_i,Y_i\}_{i=1}^N$ defined below form a basis for the Lie algebra corresponding to the Heisenberg group $\mathbb{H}^N$:
\begin{align}
	X_i&=\frac{\partial}{\partial x_i}+2y_i\frac{\partial}{\partial t};
	Y_i=\frac{\partial}{\partial y_i}-2x_i\frac{\partial}{\partial t}~\text{and}~
	T=\frac{\partial}{\partial t}, ~\text{for}~i=1, 2,..., N.
\end{align}
It is easy to see that $[X_i,Y_i]=-4T$ for $i=1,2,...,N$ and $$[X_i,X_j]=[Y_i,Y_j]=[X_i,Y_j]=[X_i,T]=[Y_j,T]=0$$ for all $i\neq j$ and these vector fields satisfy the H\"{o}rmander rank condition. Consequently, the sub-Laplacian on $\mathbb{H}^N$ is given by 
$$\mathfrak{L}_{\mathbb{H}^N}:=\sum_{i=1}^N (X_i^2+Y_i^2).$$

\section{Fractional $p$-sub-Laplacian eigenvalue problem on stratified Lie groups} \label{sec4}

%The best Sobolev constant is defined as
%\begin{equation}\label{s-1}
%	S_s=\inf \left\{\|u\|^{p}: u \in X_0,|u|_{p_{s}^{*}}^{p}=1\right\}.
%\end{equation}
This section is devoted to the study of eigenvalue problems associated to the fractional $p$-sub-Laplacian on stratified Lie groups. 
Let us consider the following PDE on a stratified Lie group ${\mathbb{G}}$:
\begin{align}\label{l-eigen}
	\left(-\Delta_{p,{\mathbb{G}}}\right)^{s} u&=\nu|u|^{p-2} u,~\text{in}~\Omega,\nonumber\\
	u&=0~\text{ in }~{\mathbb{G}}\setminus\Omega,
\end{align}
where $\nu\in\mathbb{R}$ and $\Omega$ is bounded domain in ${\mathbb{G}}$. The problem \eqref{l-eigen} is usually referred to as the fractional $p$-sub-Laplacian (or $(s,p)$-sub-Laplacian) eigenvalue problem. 
\begin{definition}
We say that $u\in X_0^{s,p}(\Omega)$ is a weak solution to \eqref{l-eigen} if, for each $\phi\in C_c^{\infty}(\Omega),$ we have

\begin{equation}\label{ev-soln}
	\langle\left(-\Delta_{p,{\mathbb{G}}}\right)^s u,\phi\rangle=\nu\int_{\Omega} |u|^{p-2}u\phi dx.
\end{equation}
A nontrivial solution to \eqref{ev-soln}  is known as the $(s,p)$-sub-Laplacian eigenfunctions corresponding to an $(s,p)$-sub-Laplacian eigenvalue $\nu$.
\end{definition}
Such eigenfunctions are directly related to the following minimization problem of the Rayleigh quotient $\mathcal{R}$ defined by
\begin{equation}\label{ev-raleigh}
\mathcal{R}(u):=\cfrac{\iint_{\mathbb{G}\times\mathbb{G}}\frac{|u(x)-u(y)|^p}{|y^{-1}x|^{Q+ps}}dxdy}{\int_{\Omega}|u(x)|^p dx},~u\in C_c^{\infty}(\Omega).
\end{equation}

Observe that a minimizer for the Rayleigh quotient does not change its sign. This follows immediately from the triangle inequality \begin{equation*}
		|u(x)-u(y)|>||u(x)|-|u(y)||~\text{whenever}~ u(x)u(y)<0.
	\end{equation*}
Consider the space $\mathcal{S}$ defined as
\begin{equation}
\mathcal{S}=\{u\in X_0^{s,p}(\Omega): \|u\|_p=1\}.
\end{equation}
Then the eigenfunctions of \eqref{l-eigen} are the minimizers of the following energy functional on $\mathcal{S}$:
\begin{equation}\label{ev-energy}
	I(u)=\iint_{\mathbb{G}\times \mathbb{G}}\frac{|u(x)-u(y)|^p}{|y^{-1}x|^{Q+ps}}dxdy.	
\end{equation}
In particular, the eigenfunctions of the problem \eqref{l-eigen} coincides with the critical points of $I$ on the space $\mathcal{S}$.

We define the first eigenvalue or the least eigenvalue  $\lambda_1(\Omega)$ over $\Omega$ as
\begin{align}\label{ev-1}
\lambda_1(\Omega)=&\inf\{\mathcal{R}(\phi): \phi \in C_c^{\infty}(\Omega)\}\\
&\text{or}\nonumber\\
\lambda_1(\Omega)=&\inf\{I(u): u \in \mathcal{S}\}.
\end{align}
 
 Recall the Sobolev inequality \eqref{embed-cont} which is given by
 $$\left(\int_{\Omega}|u(x)|^{p^*} dx \right)^{\frac{1}{p_s^*}} \leq C(Q, p, s) \left( \int_{\mathbb{G}} \int_{ \mathbb{G}} \frac{|u(x)-u(y)|^p}{|y^{-1}x|^{Q+ps}} dx\,dy \right)^{\frac{1}{p}}.$$
 From the H\"older inequality with the exponent $\frac{p^*_s}{p}$ and $\frac{p_s^*}{p_s^*-p}$  we obtain the following inequality which assures that the first eigenvalue $\lambda_1(\Omega)$ of \eqref{l-eigen} is positive:
\begin{align}\label{dadaineq}
    C(Q, p, s)^{-p}\,|\Omega|^{-\frac{ps}{Q}} \int_{\Omega} |u(x)|^p\,dx \leq \int_{\mathbb{G}} \int_{ \mathbb{G}} \frac{|u(x)-u(y)|^p}{|y^{-1}x|^{Q+ps}} dx\,dy.
\end{align}
Thus, by definition all eigenvalues of \eqref{l-eigen} are positive.
 The weak solution of  \eqref{l-eigen} corresponding to $\nu=\lambda_1$ is called the first  eigenfunction of \eqref{l-eigen}.

 We now state the following existence result for the problem \eqref{l-eigen}.

\begin{theorem}\label{ev-thm1}
	Let $0<s<1<p<\infty$ and let $\Omega$ be a bounded domain of a stratified Lie group $\mathbb{G}$ of homogeneous dimension $Q$. Then for $Q>ps$, there exists a nonnegative minimizer $\phi_1$ of \eqref{ev-energy} in $X_0^{s,p}(\Omega)$ and $\phi_1$ is a weak solution to the problem \eqref{l-eigen} for $\nu=\lambda_1(\Omega)$. Moreover, $\phi_1\in L^{\infty}(\Omega)$. Furthermore, there exists $C=C(Q, p,s)$ such that $\lambda_1(\Omega) \geq C |\Omega|^{-\frac{ps}{Q}}.$
\end{theorem}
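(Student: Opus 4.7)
The plan is to apply the direct method of the calculus of variations to the functional $I$ constrained to $\mathcal{S}$. Take a minimizing sequence $\{u_n\}\subset\mathcal{S}$ with $I(u_n)\to \lambda_1(\Omega)$. Since $I(u_n)=[u_n]_{s,p,\mathbb{G}}^p$ is bounded and $\|u_n\|_p=1$, the sequence is bounded in $X_0^{s,p}(\Omega)$. By the reflexivity in Lemma \ref{l-2}, after passing to a subsequence $u_n\rightharpoonup\phi_1$ weakly in $X_0^{s,p}(\Omega)$, while the compact embedding part of Theorem \ref{l-3} yields $u_n\to\phi_1$ strongly in $L^p(\Omega)$; in particular $\|\phi_1\|_p=1$, so $\phi_1\in\mathcal{S}$. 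Since $I$ is the $p$-th power of a norm on $X_0^{s,p}(\Omega)$, it is weakly lower semicontinuous, hence $I(\phi_1)\le \liminf_n I(u_n)=\lambda_1(\Omega)$, and $\phi_1$ is a minimizer.

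To make the minimizer nonnegative, replace $\phi_1$ by $|\phi_1|$; the pointwise estimate $\bigl||\phi_1(x)|-|\phi_1(y)|\bigr|\le|\phi_1(x)-\phi_1(y)|$ gives $I(|\phi_1|)\le I(\phi_1)$, while $\||\phi_1|\|_p=1$, so we may assume $\phi_1\ge 0$. To verify that $\phi_1$ satisfies \eqref{ev-soln} with $\nu=\lambda_1(\Omega)$, I would use a Lagrange multiplier argument: for $\varphi\in C_c^{\infty}(\Omega)$, consider $J(t):=I(\phi_1+t\varphi)/\|\phi_1+t\varphi\|_p^p$ and set $J'(0)=0$. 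Differentiating the $p$-th power of the Gagliardo seminorm reproduces the kernel $|u(x)-u(y)|^{p-2}(u(x)-u(y))$ inside the double integral, and the resulting first-order condition is precisely the weak formulation of \eqref{l-eigen}.

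The lower bound $\lambda_1(\Omega)\ge C(Q,p,s)^{-p}|\Omega|^{-ps/Q}$ is then immediate from \eqref{dadaineq}: dividing by $\int_\Omega|u|^p\,dx$ and taking the infimum over $u\in\mathcal{S}$ yields the stated inequality with $C=C(Q,p,s)^{-p}$.

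The remaining assertion $\phi_1\in L^\infty(\Omega)$ is the main technical hurdle. I would establish it by Moser iteration: for a growing sequence of exponents $\beta_k$, use a truncated version of $|\phi_1|^{\beta_k-1}\phi_1$ as a test function in \eqref{ev-soln}, apply a convexity-type algebraic inequality to bound the resulting nonlocal integrand from below by the Gagliardo integrand of $|\phi_1|^{(p+\beta_k-1)/p}$, and invoke the Sobolev embedding in Theorem \ref{l-3} to obtain a recursive estimate of the form $\|\phi_1\|_{L^{r_{k+1}}(\Omega)}\le C_k^{1/\beta_k}\|\phi_1\|_{L^{r_k}(\Omega)}$ with $r_k\to\infty$. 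Iterating, and controlling the product of constants $\prod_k C_k^{1/\beta_k}$, yields a uniform $L^\infty$ bound. The subtle point is the algebraic inequality governing the nonlocal kernel, together with the bookkeeping that keeps the accumulated constants finite; this argument mirrors the Moser iteration carried out in the final section of the paper for the Brezis-Nirenberg problem.
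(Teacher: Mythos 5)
Your plan is correct and follows essentially the same overall strategy as the paper: direct method on the constraint set $\mathcal{S}$, passing to $|\phi_1|$ via the triangle inequality for the minimizer's nonnegativity, reading the bound $\lambda_1(\Omega)\geq C|\Omega|^{-ps/Q}$ off \eqref{dadaineq}, and obtaining $\phi_1\in L^\infty(\Omega)$ by Moser iteration. The one genuine difference worth flagging is in the existence step: you invoke weak lower semicontinuity of the convex functional $I(u)=[u]_{s,p,\mathbb G}^p$ together with the compact embedding (to get $\|\phi_1\|_p=1$), which settles $I(\phi_1)\leq\liminf I(u_n)=\lambda_1(\Omega)$ directly. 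The paper instead goes further and proves \emph{strong} convergence $u_n\to\phi_1$ in $X_0^{s,p}(\Omega)$ via the monotonicity inequality \eqref{append} and uniform convexity. Your route is shorter and entirely sufficient for the statement of the theorem; the stronger conclusion obtained by the paper is not needed for existence but could be convenient elsewhere. Your Lagrange multiplier step (that the constrained minimizer solves the Euler--Lagrange equation \eqref{ev-soln} with $\nu=\lambda_1$) is the standard argument and is what the paper implicitly assumes. Finally, for the $L^\infty$ bound you propose to redo the Moser iteration from scratch; the paper instead observes that \eqref{l-eigen} is the special case of \eqref{problem} obtained by taking $\lambda=0$, $g\equiv\nu$, and $q=p-1$ (the paper's ``$q=p$'' is a typo), so that Lemma \ref{bounded} applies directly. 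Either way works, but citing the lemma avoids repeating the bookkeeping of iterated exponents that you correctly identify as the technical core.
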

\begin{proof}	The proof for existence is straightforward from the direct method of the calculus of variations. Suppose $\{u_n\}$ is a minimizing sequence for $I$. Then, by the Sobolev inequality, we have that $\{u_n\}$ is bounded in $X_0^{s,p}(\Omega)$. Thanks to the reflexivity of $X_0^{s,p}(\Omega)$, we get $\phi_1\in X_0^{s,p}(\Omega)$ such that up to a subsequence $u_n\rightharpoonup \phi_1$ weakly in $X_0^{s,p}(\Omega)$ and therefore, $u_n\rightarrow \phi_1$ strongly in $(X_0^{s,p}(\Omega))^{'} :=X_0^{-s,p'}(\Omega)$. Thus, Theorem \ref{l-3} implies that  $u_n\rightarrow \phi_1$ strongly in $L^p(\Omega)$ and $u_n\rightarrow \phi_1$ a.e. in $\Omega$ and $u_n\rightarrow \phi_1$ strongly in $L^{p'}(\Omega)$, where $p'=\frac{p}{p-1}.$  To prove the strong convergence, we show that $\|u_n\|_{X_0^{s,p}(\Omega)}\rightarrow\|\phi_1\|_{X_0^{s,p}(\Omega)}$. The weak convergence implies that
	\begin{equation} \label{4.7eq}
	\langle\left(-\Delta_{p,{\mathbb{G}}}\right)^s u_n- \left(-\Delta_{p,{\mathbb{G}}}\right)^s \phi_1, u_n-\phi_1\rangle\rightarrow0.
	\end{equation}
We will use  the following inequality from \eqref{append}:
	\begin{equation} \label{4.8eq}
		\langle\left(-\Delta_{p,{\mathbb{G}}}\right)^s u_1- \left(-\Delta_{p,{\mathbb{G}}}\right)^s u_2, u_1-u_2\rangle\geq C_p\begin{cases}
			[u_1-u_2]_{s,p}^p,&\text{if}~p\geq2\\
			\frac{[u_1-u_2]_{s,p}^2}{\left([u_1]_{s,p}^p+[u_2]_{s,p}^p\right)^{\frac{2-p}{p}}},&\text{if}~1<p<2.
		\end{cases}
	\end{equation}
Thus, by combining these two inequalities \eqref{4.7eq} and \eqref{4.8eq}, we obtain $\|u_n\|_{X_0^{s,p}(\Omega)}\rightarrow\|\phi_1\|_{X_0^{s,p}(\Omega)}$ and therefore, by using the uniform convexity, we conclude $u_n\rightarrow \phi_1$ strongly in $X_0^{s,p}(\Omega)$. In addition to this, we also observe that $I(|\phi_1|)=I(\phi_1)$. Thus we conclude that the solutions are nonnegative. Indeed, we have
	\begin{align*}
	    \lambda_1(\Omega)&=\inf_{u\in \mathcal{S}}\int_{\mathbb{G}} \int_{ \mathbb{G}} \frac{|u(x)-u(y)|^p}{|y^{-1}x|^{Q+ps}}dxdy\\
	    &\leq \int_{\mathbb{G}} \int_{ \mathbb{G}} \frac{||\phi_1(x)|-|\phi_1(y)||^p}{|y^{-1}x|^{Q+ps}}dxdy\\
	    &\leq \int_{\mathbb{G}} \int_{ \mathbb{G}} \frac{|\phi_1(x)-\phi_1(y)|^p}{|y^{-1}x|^{Q+ps}}\\
	    &=\lambda_1(\Omega).
	\end{align*}
	Thus, $|\phi_1|$ is also minimizes $I$  over $\mathcal{S}$. Therefore, we may conclude that the first eigenfunction of \eqref{l-eigen} can be chosen to be non-negative.

By taking $\lambda=0,$ $g(x)=\nu$ and $q=p$ in the problem \eqref{problem} and from the Lemma \ref{bounded} (see Section \ref{sec6}), we deduce that every solutions of the eigenvalue problem \eqref{ev-1} are uniformly bounded.
\end{proof}

\begin{theorem}\label{sign-changing}
	Let $0<s<1<p<\infty.$ Assume that $\Omega\subset{\mathbb{G}}$ is a bounded domain of a stratified Lie group $\mathbb{G}$. Let $v\in X_0^{s,p}(\Omega)$ solve \eqref{l-eigen} and assume that $v>0,$ and let $\nu$ be the corresponding eigenvalue of $v$. Then we have
	\begin{equation}
	\nu=\lambda_1(\Omega),
	\end{equation}
	where $\lambda_1(\Omega)=\inf \{I(\phi): \phi \in X_0^{s,p}(\Omega)\}$. In particular, any eigenfunction corresponding to an eigenvalue $\nu>\lambda_1(\Omega)$ must be sign-changing.
\end{theorem}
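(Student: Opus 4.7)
The plan is to combine the definition of $\lambda_1(\Omega)$ as an infimum (which gives $\nu \geq \lambda_1(\Omega)$ automatically, since any eigenvalue arises from a critical point of $\mathcal{R}$ on $\mathcal{S}$ and $\lambda_1$ is the smallest such value, as established in Theorem \ref{ev-thm1}) with a reverse inequality $\nu \leq \lambda_1(\Omega)$ obtained by a nonlocal Picone/hidden-convexity argument. The reverse inequality is the only real content; equality then forces any eigenfunction with eigenvalue strictly greater than $\lambda_1(\Omega)$ to take both signs, proving the last sentence.

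First I would fix a nonnegative first eigenfunction $\phi_1\in X_0^{s,p}(\Omega)$ provided by Theorem \ref{ev-thm1}, and upgrade it to $\phi_1>0$ in $\Omega$ via the strong minimum principle (Theorem \ref{min p}) referenced in the paper. Then, for the given $v>0$, I would use the discrete nonlocal Picone inequality for positive functions, namely
\begin{equation*}
|v(x)-v(y)|^{p-2}(v(x)-v(y))\left(\frac{\phi_1(x)^p}{v(x)^{p-1}}-\frac{\phi_1(y)^p}{v(y)^{p-1}}\right)\leq |\phi_1(x)-\phi_1(y)|^p.
\end{equation*}
This pointwise inequality is the essential algebraic ingredient; to obtain it one exploits convexity of $t\mapsto t^p$ and rearranges, as in the Euclidean case treated by Brasco--Franzina.

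Next, in order to use $\psi:=\phi_1^p/(v+\varepsilon)^{p-1}$ as a test function, I would verify $\psi\in X_0^{s,p}(\Omega)$ for each $\varepsilon>0$: the truncation $v+\varepsilon\geq \varepsilon$ keeps the denominator uniformly bounded away from zero, and since $\phi_1\in X_0^{s,p}(\Omega)\cap L^\infty(\Omega)$ (boundedness from Theorem \ref{ev-thm1}), standard composition/chain-rule estimates on Gagliardo seminorms yield $\psi\in X_0^{s,p}(\Omega)$. Testing the weak formulation \eqref{ev-soln} for $v$ against $\psi$ and applying the Picone inequality above (with $v+\varepsilon$ in place of $v$, which is harmless since $v>0$) gives
\begin{equation*}
\nu\int_\Omega v^{p-2}v\,\frac{\phi_1^p}{(v+\varepsilon)^{p-1}}\,dx \;\leq\; [\phi_1]_{s,p,\mathbb{G}}^{\,p} \;=\; \lambda_1(\Omega)\int_\Omega \phi_1^p\,dx,
\end{equation*}
where the last equality uses that $\phi_1$ attains the infimum in \eqref{ev-1}. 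Letting $\varepsilon\to 0^+$ and invoking the dominated convergence theorem (the integrand is pointwise dominated by $\phi_1^p\in L^1(\Omega)$ because $v^{p-1}/(v+\varepsilon)^{p-1}\leq 1$ and converges to $1$ a.e. since $v>0$) produces $\nu \int_\Omega \phi_1^p\,dx \leq \lambda_1(\Omega)\int_\Omega \phi_1^p\,dx$. Since $\phi_1\not\equiv 0$, this yields $\nu\leq \lambda_1(\Omega)$, and combined with $\nu\geq\lambda_1(\Omega)$ from the definition of the first eigenvalue we conclude $\nu=\lambda_1(\Omega)$. The sign-changing statement is then immediate by contraposition: any eigenfunction with eigenvalue $\nu>\lambda_1(\Omega)$ cannot be of one sign.

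The main obstacle I foresee is the rigorous justification of the Picone test-function argument, in two places: verifying that $\psi=\phi_1^p/(v+\varepsilon)^{p-1}$ genuinely lies in $X_0^{s,p}(\Omega)$ with controlled Gagliardo seminorm (requiring the $L^\infty$ bound on $\phi_1$ together with a careful splitting of the double integral into regions where $\phi_1$ or $v$ dominates), and then passing safely to the limit $\varepsilon\to 0^+$ on both sides, which needs either a uniform $L^1$ domination or Fatou's lemma on the Picone side together with monotone convergence on the eigenvalue side. All other steps, in particular the strong minimum principle ensuring $\phi_1>0$, are already available from earlier results cited in the excerpt.
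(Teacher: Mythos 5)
Your proposal is correct, but it runs along a genuinely different track than the paper's proof. The paper uses the ``hidden convexity'' of the Gagliardo energy along curves of the form $z(t,\epsilon)=\left(t\,u_\epsilon^p+(1-t)\,v_\epsilon^p\right)^{1/p}$: it first establishes that $t\mapsto I(z(t))$ is bounded by the chord $(1-t)I(v)+tI(u)$, then lower-bounds the secant $I(z(t,\epsilon))-I(v)$ via the convexity of $\tau\mapsto|\tau|^p$, tests the eigenvalue equation for $v$ against $\phi=z(t,\epsilon)-v_\epsilon$, invokes the concavity of $\tau\mapsto|\tau|^{1/p}$ to get a pointwise lower bound for the integrand, and then sends $t\to 0^+$ (Fatou) and $\epsilon\to 0^+$ (dominated convergence). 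You instead apply the discrete nonlocal Picone inequality directly to the pair $(\phi_1, v+\epsilon)$ and test with $\psi=\phi_1^p/(v+\epsilon)^{p-1}$, obtaining the reverse inequality $\nu\leq\lambda_1(\Omega)$ in one integrated pass followed by a single $\epsilon\to 0^+$ limit. The two routes are morally equivalent (the hidden-convexity inequality is essentially an integrated Picone inequality, and both appear in the Brasco--Franzina circle of ideas cited in the bibliography as \cite{BF14}), but they differ in organization: the paper's argument avoids stating the pointwise Picone inequality and instead deduces what it needs through the convexity/concavity sandwich and a double limit in $(t,\epsilon)$, whereas yours front-loads the algebraic work into a single pointwise inequality and then only has to manage the $\epsilon$-regularization of $v$ in the denominator. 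What your approach buys is a shorter, more transparent limiting argument and a cleaner separation between algebra (Picone) and analysis (test-function admissibility and dominated convergence). What it costs, and this is the one place to be careful, is that you must actually prove $\psi=\phi_1^p/(v+\epsilon)^{p-1}\in X_0^{s,p}(\Omega)$ for each $\epsilon>0$; you correctly flag this and correctly identify the needed ingredients ($\phi_1\in L^\infty$ from Theorem \ref{ev-thm1}, the uniform lower bound $v+\epsilon\geq\epsilon$, and a product/quotient estimate on the Gagliardo seminorm), and that gap is genuinely fillable, so the proposal stands.
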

\begin{proof}
	 For every nonnegative $u,v\in X_0^{s,p}(\Omega)$, we claim that 
	\begin{equation}\label{ev-z}
	I(z(t))\leq(1-t)I(v)+tI(u),~\forall\, t\in[0,1],
	\end{equation}
	where $z(t)= \left((1-t)v^p(x) + tu^p(x) \right)^{1/p}, ~\forall\,t\in[0,1].$ Let us first establish the above   inequality.
The estimate follows immediately by considering the $\ell_p$-norm of $z(t)$ over $\mathbb{R}^2$. Observe that $$z(t)=\left\|\left(t^{\frac{1}{p}} u,(1-t)^{\frac{1}{p}} v\right)\right\|_{\ell_{p}}.$$ For any $x, y \in\Omega\subset{\mathbb{G}}$, we first put $$a=\left(t^{1 / p} u(y),(1-t)^{1 / p} v(y)\right)$$ and $$b=\left(t^{1 / p} u(x),(1-t)^{1 / p} v(y)\right)$$ in the following triangle inequality $$|\|a\|_{\ell_p}-\|b\|_{\ell_p}|\leq\|a-b\|_{\ell_p}$$ and then  divide it by the fractional $p$-kernel $|y^{-1}x|^{Q+ps}$ on both sides followed by  integration to obtain the desired inequality \eqref{ev-z}.
	
We  now proceed to prove the main claim of this theorem. Suppose $v \in X_0^{s, p}(\Omega)$ and $v>0$ in $\Omega$ is a weak solution of \eqref{l-eigen}. Further, by normalizing, if necessary, we may assume that $\|v\|_p=1$. Suppose that $u \in X_0^{s, p}(\Omega)$ minimizes the problem \eqref{ev-1}. In other words 
\begin{equation*}
\lambda_1(\Omega)=\min \left\{I(u): u \in X_0^{s, p}(\Omega), \int_{\Omega}|u(x)|^{p} dx=1\right\}
\end{equation*} is minimized at $u.$
Define, $u_{\epsilon}=u+\epsilon$, $v_{\epsilon}=v+\epsilon$ and
for all $x \in \Omega$ 
\begin{equation*}
z(t,\epsilon)(x)=\left(t u_{\epsilon}(x)^{p}+(1-t) v_{\epsilon}(x)^{p}\right)^{\frac{1}{p}}, ~t \in[0,1].
\end{equation*}
Thanks to the inequality \eqref{ev-z},  the image of $t \mapsto z(t,\epsilon)$ is a family of curves in $X_0^{s, p}(\Omega)$ along which the  energy $I$ is convex. Thus we have

\begin{align}\label{ev-est1}
\iint_{\mathbb{G}\times\mathbb{G}} &\frac{\left|z(t,\epsilon)(x)-z(t,\epsilon)(y)\right|^{p}}{|y^{-1}x|^{Q+ps}} dxdy-\iint_{\mathbb{G}\times\mathbb{G}} \frac{|v(x)-v(y)|^{p}}{|y^{-1}x|^{Q+ps}} dxdy\nonumber\\
& \leq t\left(\iint_{\mathbb{G}\times \mathbb{G}} \frac{|u(x)-u(y)|^{p}}{|y^{-1}x|^{Q+ps}} dxdy -\iint_{\mathbb{G} \times \mathbb{G}} \frac{|v(x)-v(y)|^{p}}{|y^{-1}x|^{Q+ps}} dxdy\right)\nonumber\\
&=t\left(\lambda_1(\Omega)-\nu\right),~\forall\,t\in[0,1] ~\text{and}~ \forall\, \epsilon\ll 1.
\end{align}

Now, using the convexity of $\tau \mapsto|\tau|^{p},$ that is, $(|a|^p-|b|^p \geq p|b|^{p-2} b (a-b))$, we estimate the left hand side  of \eqref{ev-est1} from below as follows:

\begin{align}\label{ev-est2}
\iint_{\mathbb{G}\times\mathbb{G}} &\frac{\left|z(t,\epsilon)(x)-z(t,\epsilon)(y)\right|^{p}}{|y^{-1}x|^{Q+ps}} dxdy-\iint_{\mathbb{G} \times \mathbb{G}} \frac{|v(x)-v(y)|^{p}}{|y^{-1}x|^{Q+ps}} dxdy\nonumber\\
&\geq p\iint_{\mathbb{G} \times \mathbb{G}} \frac{|v(x)-v(y)|^{p-2}(v(y)-v(x))}{|y^{-1}x|^{Q+ps}}\nonumber\\ &\quad \quad\times\left(z(t,\epsilon)(y)-z(t,\epsilon)(x)-(v(y)-v(x))\right) dxdy,
\end{align}
for all\,$t\in[0,1]$ and  for all $ \epsilon\ll 1.$

Observe that, the fact $u, v \in X_0^{s, p}(\Omega)$ implies that $$z(t,\epsilon)\in X_0^{s, p}(\Omega)~\text{and}~v(y)-v(x)=v_{\epsilon}(y)-v_{\epsilon}(x).$$
Thus, on testing \eqref{ev-soln} with $\phi=(z(t,\epsilon)-v_{\epsilon})$ corresponding to the eigenfunction $v$, we get,  for all $\epsilon\ll 1$,

\begin{align}\label{ev-est3}
\iint_{\mathbb{G} \times \mathbb{G}} \frac{|v(x)-v(y)|^{p-2}(v(y)-v(x))}{|y^{-1}x|^{Q+ps}}\left(z(t,\epsilon)(y)-z(t,\epsilon)(x)-\left(v_{\epsilon}(y)-v_{\epsilon}(x)\right)\right)dxdy\nonumber\\
=\nu \int_{\Omega} v(\tau)^{p-1}\left(z(t,\epsilon)(\tau)-v_\epsilon(\tau)\right)d\tau.
\end{align}
Therefore, from \eqref{ev-est1}, \eqref{ev-est2} and \eqref{ev-est3}, we obtain
\begin{equation}\label{ev-est4}
\nu \int_{\Omega} v(\tau)^{p-1} ({z(t,\epsilon)(\tau)-v_{\epsilon}(\tau)}) d\tau\leq t(\lambda_1(\Omega)-\nu),
\end{equation}
for all\,$t\in[0,1]$ and  for all $ \epsilon\ll 1.$

Now, by the concavity $\tau \mapsto |\tau|^{\frac{1}{p}}$ and by recalling that $z(t,\epsilon)(x)=\left(t u_{\epsilon}(x)^{p}+(1-t) v_{\epsilon}(x)^{p}\right)^{\frac{1}{p}}$ we get the following point-wise boundedness a.e. in $\Omega$
\begin{equation}
v(\tau)^{p-1} ({z(t,\epsilon)(\tau)-v_{\epsilon}(\tau)})\geq t\, v(\tau)^{p-1}\left(u_{\epsilon}(\tau)-v_{\epsilon}(\tau)\right)
\end{equation}
and $$v(\tau)^{p-1}\left(u_{\epsilon}(\tau)-v_{\epsilon}(\tau)\right)\in L^1(\Omega).$$
Therefore, from Fatou's lemma we obtain

\begin{align}
\nu \int_{\Omega}\left(\frac{v(\tau)}{v_{\epsilon}(\tau)}\right)^{p-1}((u_{\epsilon}(\tau))^{p}-(v_{\epsilon}(\tau))^{p})d\tau
& \leq \nu\liminf_{t \rightarrow 0^{+}} \int_{\Omega} v(\tau)^{p-1} \frac{z(t,\epsilon)(\tau)-v_{\epsilon}(\tau)}{t}d\tau\nonumber\\&\leq \lambda_1(\Omega)-\nu
\end{align}
for sufficiently small $\epsilon>0.$

Finally, recalling that $v>0$ and applying the Lebesgue dominated convergence theorem and then passing the limit $\epsilon \rightarrow 0^{+}$, we get
\begin{equation}
0 \leq \lambda_1(\Omega)-\nu.
\end{equation}
Since, $\lambda_1(\Omega)$ is the least eigenvalue and $\lambda_1(\Omega)\geq\nu$, we conclude that $\lambda_1(\Omega)=\nu$. Hence, the proof is complete.	
\end{proof}

\begin{lemma}\label{l-simple}
	Let $0<s<1<p<\infty$ and let $\Omega\subset{\mathbb{G}}$ be a bounded domain. Suppose that $u$ and $v$ are two positive eigenfunctions corresponding to $\lambda_1(\Omega)$. Then $u=cv$ for some $c>0$, that means,  $u$ and $v$ are proportional. This says that the first eigenfunction $\lambda_1(\Omega)$ is simple.
\end{lemma}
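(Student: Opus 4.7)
The plan is to exploit the convexity inequality along the path
$z(t,x) := \bigl((1-t)v(x)^p + tu(x)^p\bigr)^{1/p}$
that was used in the proof of Theorem \ref{sign-changing}, but now to analyse its \emph{equality case}. After replacing $u$ and $v$ by scalar multiples I may assume $\|u\|_{L^p(\Omega)}=\|v\|_{L^p(\Omega)}=1$, since the eigenvalue problem \eqref{l-eigen} is $1$-homogeneous and proportionality is unaffected by this normalization.

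First I would observe that
$$\int_\Omega z(t,x)^p\,dx \;=\; (1-t)\!\int_\Omega v^p\,dx + t\!\int_\Omega u^p\,dx \;=\; 1,$$
so $z(t,\cdot)\in\mathcal{S}$ for every $t\in[0,1]$. On the one hand, the minimizing property of $\lambda_1(\Omega)$ gives $I(z(t,\cdot))\geq\lambda_1(\Omega)$; on the other hand, the convexity inequality \eqref{ev-z} established in Theorem \ref{sign-changing}, combined with $I(u)=I(v)=\lambda_1(\Omega)$, yields $I(z(t,\cdot))\leq\lambda_1(\Omega)$. Hence equality holds throughout.

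Next I would trace the equality back to the pointwise estimate from which \eqref{ev-z} was derived, namely
$$|z(t,x)-z(t,y)|^p \;\leq\; t\,|u(x)-u(y)|^p + (1-t)\,|v(x)-v(y)|^p,$$
itself obtained by applying the reverse triangle inequality $|\|a\|_{\ell^p}-\|b\|_{\ell^p}|\leq\|a-b\|_{\ell^p}$ in $\mathbb{R}^2$ to the vectors $a=(t^{1/p}u(y),(1-t)^{1/p}v(y))$ and $b=(t^{1/p}u(x),(1-t)^{1/p}v(x))$. Equality in the integrated version, divided by the positive kernel $|y^{-1}x|^{Q+ps}$, forces equality in this pointwise bound for almost every pair $(x,y)\in\mathbb{G}\times\mathbb{G}$. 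Since $1<p<\infty$, the $\ell^p$ unit ball is strictly convex, so equality in the reverse triangle inequality in $\mathbb{R}^2$ holds only when $a$ and $b$ are nonnegatively proportional. Consequently, for a.e.\ $(x,y)\in\Omega\times\Omega$ there exists $\mu(x,y)\geq 0$ with $(u(y),v(y))=\mu(x,y)\,(u(x),v(x))$, which gives the separation identity $u(x)v(y)=u(y)v(x)$ a.e.\ in $\Omega\times\Omega$.

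Finally, using the positivity $v>0$ a.e.\ in $\Omega$, the identity $u(x)/v(x)=u(y)/v(y)$ forces $u/v$ to be a.e.\ constant on $\Omega$, producing the constant $c>0$ with $u=cv$. The main obstacle in this plan is the rigorous equality-case analysis: one must check that passing from the integrated equality to the pointwise identity is justified (it is, because both the $\ell^p$ bound and the kernel are positive, so a nonnegative integrand with zero integral vanishes a.e.), and then that strict convexity of $\ell^p$ indeed rules out any alternative to proportionality of the two-dimensional vectors $a,b$ a.e.; everything else is routine.
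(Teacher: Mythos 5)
Your proof is correct and takes essentially the same route as the paper: both exploit the convexity inequality \eqref{ev-z} along the path $z(t)$, trap $I$ between $\lambda_1(\Omega)$ on both sides, and then trace the resulting equality back to the pointwise level to extract the separation identity $u(x)v(y)=u(y)v(x)$. The only cosmetic difference is that the paper invokes the equality case of \cite[Lemma 13]{LL14} for the convex function $B(l,m)=|l^{1/p}-m^{1/p}|^p$, whereas you derive the same conclusion directly from strict convexity of the $\ell^p$-norm on $\mathbb{R}^2$ applied to the vectors $a=(t^{1/p}u(y),(1-t)^{1/p}v(y))$ and $b=(t^{1/p}u(x),(1-t)^{1/p}v(x))$, which is a self-contained replacement for that citation.
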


\begin{proof}
	
	Let $u,v\in X_0^{s, p}(\Omega)$ be such that $\|u\|_p=\|v\|_p=1$ and $u,v \geq 0$. Recall the inequality \eqref{ev-z} for $t=1/2$. Then, we have
	\begin{equation}\label{simple}
		I\left( \left( \frac{v^p + u^p}{2}\right)^{1/p} \right)\leq\frac{I(v)+I(u)}{2}.
	\end{equation}
Observe that $w=\left( \frac{v^p + u^p}{2}\right)^{1/p}\in\mathcal{S}$. Consider the convex function $$B(l,m)=\left|l^{\frac{1}{p}}-m^{\frac{1}{p}}\right|^p~\text{for all}~l>0,m>0.$$

Recall from \cite[Lemma 13]{LL14} that 
$$B\Big(\frac{l_1+l_2}{2}, \frac{m_1+m_2}{2}\Big)\leq \frac{1}{2}B(l_1,m_1) + \frac{1}{2}B(l_2,m_2)$$
and equality holds only if $l_1m_2=l_2m_1$. Thus, using the fact that $u,v,w\in\mathcal{S}$ and \eqref{simple}, we obtain
\begin{align*}
    \lambda_1(\Omega)&\leq \int_{\mathbb{G}} \int_{ \mathbb{G}} \frac{|w(x)-w(y)|^p}{|y^{-1}x|^{Q+ps}}dxdy\\
    &\leq \frac{1}{2}\int_{\mathbb{G}} \int_{ \mathbb{G}} \frac{|u(x)-u(y)|^p}{|y^{-1}x|^{Q+ps}}dxdy + \frac{1}{2}\int_{\mathbb{G}} \int_{ \mathbb{G}} \frac{|v(x)-v(y)|^p}{|y^{-1}x|^{Q+ps}}dxdy=\lambda_1(\Omega).
\end{align*}
Therefore, the inequality becomes equality and thus we get $$u(x)v(y)=v(x)u(y).$$
This implies
	\begin{equation*}
	\frac{u(y)}{v(y)}=\frac{u(x)}{v(x)}=c, (say).
	\end{equation*}
	Hence, $u=c v$ a.e. in $\Omega$. \end{proof}

\noindent Consider the problem
\begin{align}\label{p-kuusi}
\left(-\Delta_{p,{\mathbb{G}}}\right)^s u=0~\text{in}~\Omega\nonumber\\
u=0~\text{in}~{\mathbb{G}}\setminus\Omega.
\end{align}

We say a function $u\in X_0^{s,p}(\Omega)$ is a weak subsolution (or supersolution) of \eqref{p-kuusi}, if for every nonnegative $\phi \in X_0^{s,p}(\Omega)$, we have
\begin{equation}\label{k-subsup}
\int_{\mathbb{G}}\int_{\mathbb{G}}\frac{|u(x)-u(y)|^{p-2}(u(x)-u(y))(\phi(x)-\phi(y))}{|y^{-1}x|^{Q+ps}}dxdy\leq (or \geq) 0.
\end{equation}

A function $u\in X_0^{s,p}(\Omega)$ is a weak solution of \eqref{p-kuusi}, if it is  a weak subsolution as well as a weak supersolution of \eqref{k-subsup}. In particular, for every $\phi \in X_0^{s,p}(\Omega)$, $u$ satisfies
\begin{equation}\label{k-sol}
\int_{\mathbb{G}}\int_{\mathbb{G}}\frac{|u(x)-u(y)|^{p-2}(u(x)-u(y))(\phi(x)-\phi(y))}{|y^{-1}x|^{Q+ps}}dxdy= 0.
\end{equation}

We define the nonlocal tail of a function $v\in X_0^{s,p}({\Omega})$ in a quasi-ball $B_R(x_0)\subset{\mathbb{G}}$ given by 
\begin{equation}
Tail(v,x_0,R)=\left[R^{sp}\int_{{\mathbb{G}}\setminus{B_R(x_0)}}\frac{|v(x)|^{p-1}}{|x_0^{-1}x|^{Q+ps}}dx\right]^{\frac{1}{p-1}}.
\end{equation}
Clearly, for any $v\in L^r({\mathbb{G}}), r\geq p-1$ and $R>0$, we have $Tail(v,x_0,R)$ is finite, by using the H\"{o}lder inequality. For the definitions of the nonlocal tail in the Euclidean space and the Heisenberg group, we refer \cite{DKP14} and \cite{PP22}, respectively.

We state the following comparison principle for fractional $p$-sub-Laplacian on stratified groups.  We refer to  \cite{CZ18, CCHY18} for the strong maximal principle for the subellipic $p$-Laplacian for families of H\"ormander vector fields and to \cite{RS18, RS20, RY22} for a comparison principle for higher order invariant hypoelliptic operators on graded Lie groups.  
\begin{lemma}  \label{ev-weakcomp}
	Let $\lambda>0$, $0< s<1<p<\infty$ and $u, v\in X_0^{s,p}(\Omega)$. Suppose that  $$(-\Delta_{p,{\mathbb{G}}})^sv\geq(-\Delta_{p, {\mathbb{G}}})^su$$ weakly with $v=u=0$ in ${\mathbb{G}}\setminus\Omega$.
	Then $v\geq u$ in ${\mathbb{G}}.$
\end{lemma}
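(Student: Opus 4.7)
The natural route is a standard monotonicity-testing argument. Setting $w := u - v$, I would take $\phi := (u-v)^+$ as the test function; because both $u$ and $v$ belong to $X_0^{s,p}(\Omega)$ and vanish in $\mathbb{G}\setminus\Omega$, so does $\phi$, and $\phi \geq 0$. Plugging this nonnegative test function into the weak inequality $(-\Delta_{p,\mathbb{G}})^s v \geq (-\Delta_{p,\mathbb{G}})^s u$ and abbreviating $J_p(t) := |t|^{p-2}t$, I obtain
\begin{equation*}
\iint_{\mathbb{G}\times\mathbb{G}} \frac{\bigl[J_p(u(x)-u(y)) - J_p(v(x)-v(y))\bigr]\bigl[\phi(x)-\phi(y)\bigr]}{|y^{-1}x|^{Q+ps}}\,dx\,dy \leq 0. \quad (\ast)
\end{equation*}

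The core observation is that the integrand of $(\ast)$ is in fact pointwise nonnegative. This follows from a short case analysis on the signs of $u(x)-v(x)$ and $u(y)-v(y)$. If both are nonpositive, then $\phi(x) = \phi(y) = 0$ and the product vanishes. If both are positive, then $\phi(x)-\phi(y) = (u(x)-u(y)) - (v(x)-v(y))$ and strict monotonicity of the real map $J_p$ delivers the matching sign. In the mixed case with $u(x) > v(x)$ and $u(y) \leq v(y)$, adding $u(x) - v(x) > 0$ to $v(y) - u(y) \geq 0$ gives $u(x) - u(y) > v(x) - v(y)$, so $J_p$-monotonicity makes the first factor nonnegative while $\phi(x)-\phi(y) = u(x)-v(x) > 0$; the remaining mixed case is symmetric.

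Combining $(\ast)$ with this pointwise nonnegativity forces the integrand to vanish almost everywhere on $\mathbb{G}\times\mathbb{G}$. To upgrade this to $\phi \equiv 0$, I would argue by contradiction: suppose $E := \{x \in \Omega : u(x) > v(x)\}$ has positive Haar measure. For a.e.\ $(x,y) \in E \times (\mathbb{G}\setminus\Omega)$ one has $u(y) = v(y) = 0$, so we are in the mixed case, and the integrand is in fact \emph{strictly} positive there (because $u(x)-u(y) = u(x) > v(x) = v(x)-v(y)$ and $\phi(x) > 0$). Since $\mathbb{G}\setminus\Omega$ has infinite Haar measure (as $\Omega$ is bounded), the product $E \times (\mathbb{G}\setminus\Omega)$ would have positive measure, contradicting the a.e.\ vanishing. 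Hence $E$ is null, i.e., $u \leq v$ almost everywhere on $\mathbb{G}$.

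The main expected obstacle is the pointwise algebraic inequality — specifically the mixed-case check — since this is where the choice $\phi = (u-v)^+$, rather than $u - v$ itself, is essential. Everything else is routine: truncation at zero is continuous on $X_0^{s,p}(\Omega)$, making $\phi$ a legitimate test function, and the boundary condition $u = v = 0$ on $\mathbb{G}\setminus\Omega$ is used only in the final contradiction step to supply the infinite measure needed to rule out a positive-measure set $E$.
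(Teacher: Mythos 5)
Your proof is correct and takes essentially the same route the paper does (it reduces this lemma to its Lemma 6.1 with $\lambda=0$): test the weak inequality with $\phi=(u-v)^+$ and exploit the pointwise nonnegativity of the resulting integrand, which the weak inequality then forces to vanish a.e. The only substantive difference is cosmetic — the paper verifies nonnegativity via the integral-mean-value factorization $|b|^{p-2}b-|a|^{p-2}a=(p-1)(b-a)\int_0^1|a+t(b-a)|^{p-2}\,dt$ combined with $[\psi(x)-\psi(y)][\psi^+(x)-\psi^+(y)]\geq(\psi^+(x)-\psi^+(y))^2\geq 0$, whereas you argue by direct sign case analysis — and you spell out the final measure-theoretic step deriving $(u-v)^+\equiv 0$, which the paper's write-up leaves implicit.
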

\begin{proof}
It immediately follows from the proof of Lemma \ref{weakcomp} later on with $\lambda=0$.
\end{proof}
The next aim is to establish a minimum principle for the problem \eqref{p-kuusi}. Prior to that we will prove the following logarithmic estimate which will be used to prove the minimum principle.
\begin{lemma}\label{k-log-lemma}
	Let $0<s<1<p<\infty$ and let $u\in X_0^{s,p}(\Omega)$ be a weak supersolution of \eqref{p-kuusi} such that $u\geq0$ in $B_R:=B_R(x_0)\subset\Omega$. Then for any $B_r:=B_r(x_0)\subset B_{\frac{R}{2}}(x_0)$ and for any $d>0$, the following estimate holds:
	\begin{equation}\label{k-log-ineq}
	\int_{B_r}\int_{B_r}\left|\log\frac{u(x)+d}{u(y)+d}\right|^p\frac{dxdy}{|y^{-1}x|^{Q+ps}}\leq Cr^{Q-ps}\left(d^{1-p}\left(\frac{r}{R}\right)^{sp}[Tail(u_{-},x_0,R)]^{p-1}+1\right),
	\end{equation}
	where $C=C(N,p,s)$, $u_{-}$ is the negative part of $u$.
\end{lemma}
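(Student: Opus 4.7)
The plan is to adapt the logarithmic energy estimate of DiCastro–Kuusi–Palatucci type (see \cite{DKP14}) to the stratified setting. I will test the weak supersolution inequality \eqref{k-subsup} with a carefully chosen nonnegative test function, extract the log-energy on the local part via a sharp pointwise algebraic inequality, and control the long-range contribution in terms of $Tail(u_-,x_0,R)$. Fix a cutoff $\eta \in C_c^\infty(\mathbb{G})$ supported in $B_{3r/2}(x_0)$, equal to $1$ on $B_r(x_0)$, with $0\leq \eta \leq 1$ and $|\nabla_{\mathbb{G}} \eta| \leq C/r$ (so that $|\eta(x)-\eta(y)|\leq C |y^{-1}x|/r$); such $\eta$ exists on $\mathbb{G}$ via group convolution with a smooth bump. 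Since $B_{3r/2}(x_0) \subset B_R(x_0)$, where $u\geq 0$, the function
$$\phi(x) := \frac{\eta(x)^p}{(u(x)+d)^{p-1}}$$
is nonnegative, bounded above by $d^{1-p}$, and admissible in $X_0^{s,p}(\Omega)$.

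The analytic heart of the proof is the elementary pointwise inequality
$$|a-b|^{p-2}(a-b)\left(\frac{t_1^p}{(a+d)^{p-1}}-\frac{t_2^p}{(b+d)^{p-1}}\right) \leq -c_1\max\{t_1,t_2\}^p \left|\log \frac{a+d}{b+d}\right|^p + c_2 |t_1-t_2|^p,$$
valid for all $a,b\geq 0$ and $t_1,t_2\in[0,1]$, which I would prove by Young's inequality with separate arguments for $p\geq 2$ and $1<p<2$. Because $\phi$ vanishes outside $B_{3r/2}\subset B_R$, the supersolution integral decomposes as $\iint_{B_R\times B_R} + 2\int_{B_{3r/2}}\phi(x)\int_{\mathbb{G}\setminus B_R}$. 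Applying the pointwise inequality on $B_R\times B_R$ (where $u(x),u(y)\geq 0$) yields an upper bound of the form $-c_1 L + c_2 E$, where $L$ denotes the target log-energy restricted to $B_r\times B_r$ and $E=\iint_{B_R\times B_R}|\eta(x)-\eta(y)|^p|y^{-1}x|^{-Q-ps}dx\,dy$. A standard computation (splitting the integration domain according to whether $|y^{-1}x|\leq r$ or $|y^{-1}x|>r$ and using the gradient bound and $|\eta|\leq 1$, respectively) gives $E\leq Cr^{Q-ps}$.

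For the remaining long-range part, I use the elementary inequality $(u(x)-u(y))_+^{p-1}\leq C(u(x)^{p-1}+u_-(y)^{p-1})$, which holds because $u(x)\geq 0$ on $B_{3r/2}$, to dominate the integrand by its positive part. Combined with the geometric estimate $|y^{-1}x|\geq \tfrac14|y^{-1}x_0|$ for $x\in B_{3r/2}$ and $y\notin B_R$ (which follows from $r\leq R/2$ and the triangle inequality for the quasi-distance), the polar-type bound $\int_{\mathbb{G}\setminus B_R}|y^{-1}x_0|^{-Q-ps}dy\leq CR^{-ps}$, and the pointwise bounds $\phi(x)\leq d^{1-p}$ and $\phi(x)u(x)^{p-1}\leq \eta(x)^p\leq 1$, this yields
$$2\int_{B_{3r/2}}\phi(x)\int_{\mathbb{G}\setminus B_R}\frac{|u(x)-u(y)|^{p-2}(u(x)-u(y))}{|y^{-1}x|^{Q+ps}}dy\,dx \leq Cr^{Q-ps} + Cr^Q d^{1-p} R^{-ps}[Tail(u_-,x_0,R)]^{p-1}.$$
Inserting both contributions into the supersolution identity and rewriting $r^Q R^{-ps}=r^{Q-ps}(r/R)^{ps}$ produces the claimed estimate. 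The main technical obstacle is verifying the pointwise algebraic inequality uniformly in $p\in(1,\infty)$, for which the ranges $1<p<2$ and $p\geq 2$ require separate convexity arguments; the remaining steps rely only on left-invariance of the quasi-distance $|y^{-1}x|$, the dilation scaling $|B_r|=r^Q$, and the existence of smooth cutoffs on $\mathbb{G}$, so the adaptation from the Euclidean case of \cite{DKP14} is essentially bookkeeping.
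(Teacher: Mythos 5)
Your overall structure is sound and closely parallels the paper's argument: the test function $\psi=\eta^p(u+d)^{1-p}$, the near/far decomposition, the geometric comparison $|y^{-1}x|\simeq|y^{-1}x_0|$ on the far range, the polar-coordinate bound for the tail, and the cutoff-energy estimate $E\le Cr^{Q-ps}$ are all exactly what the paper does. The paper splits the supersolution integral as $B_{2r}\times B_{2r}$ plus a complement (then further splits the complement into $B_R\setminus B_{2r}$ and $\mathbb{G}\setminus B_R$), and extracts the log-energy by a two-case analysis on $t=\tfrac{u(y)+d}{u(x)+d}$ via the elementary inequality \eqref{k-ineq} with the clever choice $\epsilon=l\,\tfrac{u(x)-u(y)}{u(x)+d}$, rather than invoking a single packaged pointwise lemma. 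Your proposal to compress the case analysis into one algebraic inequality is a reasonable alternative packaging, but as written it is false.

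The claimed pointwise inequality
\begin{equation*}
|a-b|^{p-2}(a-b)\left(\frac{t_1^p}{(a+d)^{p-1}}-\frac{t_2^p}{(b+d)^{p-1}}\right) \leq -c_1\max\{t_1,t_2\}^p \left|\log \tfrac{a+d}{b+d}\right|^p + c_2 |t_1-t_2|^p
\end{equation*}
cannot hold for all $a,b\ge0$, $t_1,t_2\in[0,1]$ with fixed $c_1,c_2>0$: take $t_1=1$, $t_2=0$, $b=0$, and $a\to\infty$. The left-hand side equals $\bigl(\tfrac{a}{a+d}\bigr)^{p-1}$, which is bounded below by a positive constant, while the right-hand side equals $-c_1\bigl(\log(1+a/d)\bigr)^p+c_2\to-\infty$. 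The correct weight on the logarithmic term is $\min\{t_1,t_2\}^p$ (or, equivalently, the weight $\eta^p(y)$ attached to the point where $u$ is smaller, which is what the paper's estimate \eqref{k-3.6}--\eqref{k-3.11} produces and what [BP16]-type lemmas state). Since your decomposition is over $B_R\times B_R$, where $\eta$ can vanish at one argument while being $1$ at the other, this is precisely the regime where your inequality breaks; the version with $\min$ trivializes there, as it must. Once corrected to $\min\{t_1,t_2\}^p$, the lower bound on the log-energy still restricts to $B_r\times B_r$ (where $\eta\equiv1$) and yields the target quantity, so the remainder of the argument — including the tail bound, the rewriting $r^Q R^{-ps}=r^{Q-ps}(r/R)^{ps}$, and the final assembly — goes through as you describe.
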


\begin{proof}
We follow the idea from \cite{DKP16} which is proved for the Euclidean case. Let us first prove an inequality similar to Lemma 3.1 of \cite{DKP16}. 
	
	Let $p\geq1$ and $\epsilon\in(0,1]$. Then for any $a,b\in\mathbb{R}$, we have
	\begin{equation}
	|a|\leq (|b|+|a-b|).
	\end{equation}
	 Now,  using this triangle inequality and the convexity of $t^p$, we obtain
	\begin{align}\label{k-ineq}
	|a|^p\leq (|b|+|a-b|)^p&= (1+\epsilon)^p\left[\frac{1}{1+\epsilon}|b|+\frac{\epsilon}{1+\epsilon}\frac{|a-b|}{\epsilon}\right]^p\nonumber\\
	&\leq (1+\epsilon)^{p-1}|b|^p+\left(\frac{1+\epsilon}{\epsilon}\right)^{p-1}|a-b|^p\nonumber\\
	&\leq |b|^p +c_p\epsilon|b|^p+c^p(1+c_p\epsilon)\epsilon^{1-p}|a-b|^p,
	\end{align}
	where $c_p=(p-1)\Gamma(\max\{1,p-2\})$ is obtained by iterating the last term of the following estimate
	$$(1+\epsilon)^{p-1}=1+(p-1)\int_{1}^{1+\epsilon}t^{p-2}dt\leq1+\epsilon(p-1)\max\{1, (1+\epsilon)^{p-2}\}.$$
	
We will now proceed to prove the main estimate of this lemma. Let $d>0$ and $\eta \in C_c^{\infty}(\mathbb{G})$ be such that \begin{equation}
0\leq\eta\leq1,~~~\eta\equiv1~\text{in}~B_r,~~~\eta\equiv0~\text{in}~\mathbb{G}\setminus B_{2r}~~~~\text{and}~|\nabla_H \eta|<Cr^{-1}.
\end{equation}	
	Since $u(x)\geq0$ for all $x\in supp(\eta)$,  $\psi=(u+d)^{1-p}\eta^p$ is a well-defined test function for \eqref{k-sol}. Thus, we get
	
\begin{align}\label{k-3.5}
&\int_{B_{2r}} \int_{B_{2r}}\frac{|u(x)-u(y)|^{p-2}(u(x)-u(y))}{|y^{-1}x|^{Q+ps}}\left[\frac{\eta^{p}(x)}{(u(x)+d)^{p-1}}-\frac{\eta^{p}(y)}{(u(y)+d)^{p-1}}\right]dxdy\nonumber\\
&+2\int_{\mathbb{G}\setminus B_{2r}} \int_{B_{2r}}\frac{|u(x)-u(y)|^{p-2}(u(x)-u(y))}{|y^{-1}x|^{Q+ps}}\frac{\eta^{p}(x)}{(u(x)+d)^{p-1}}dxdy=0.
\end{align}

	We will estimate both the terms individually. Set
	
	\begin{align}
	I_1&=\int_{B_{2r}} \int_{B_{2r}}\frac{|u(x)-u(y)|^{p-2}(u(x)-u(y))}{|y^{-1}x|^{Q+ps}}\left[\frac{\eta^{p}(x)}{(u(x)+d)^{p-1}}-\frac{\eta^{p}(y)}{(u(y)+d)^{p-1}}\right]dxdy\label{k-I1}\\
	I_2&=2\int_{\mathbb{G} \setminus B_{2r}} \int_{B_{2r}}\frac{|u(x)-u(y)|^{p-2}(u(x)-u(y))}{|y^{-1}x|^{Q+ps}}\frac{\eta^{p}(x)}{(u(x)+d)^{p-1}}dxdy.\label{k-I2}
	\end{align}
	
	We will first estimate $I_1$. Let us assume that $u(x)>u(y)$. Observe that $u(y)\geq0$ for all $y\in B_{2r}\subset B_R$ using the support of $\eta$. Then on choosing 
	\begin{align} \label{corona}
	    a=\eta(x), b=\eta(y)~\text{and}~\epsilon=l\frac{u(x)-u(y)}{u(x)+d}\in(0,1)~\text{with}~l\in(0,1)
	\end{align}
	in the inequality \eqref{k-ineq}, it can be estimated that

\begin{align}\label{k-3.6}
	&\frac{|u(x)-u(y)|^{p-2}(u(x)-u(y))}{|y^{-1}x|^{Q+ps}}\left[\frac{\eta^{p}(x)}{(u(x)+d)^{p-1}}-\frac{\eta^{p}(y)}{(u(y)+d)^{p-1}}\right]\nonumber\\
	&
		\leq \frac{(u(x)-u(y))^{p-1}}{(u(x)+d)^{p-1}}\frac{ \eta^{p}(y)}{|y^{-1}x|^{Q+ps}}\left[1+c_pl \frac{u(x)-u(y)}{u(x)+d}-\left(\frac{u(x)+d}{u(y)+d}\right)^{p-1}\right]\nonumber\\
		&+c_pl^{1-p} \frac{|\eta(x)-\eta(y)|^{p}}{|y^{-1}x|^{Q+ps}}\nonumber\\
		&= \left(\frac{u(x)-u(y)}{u(x)+d}\right)^{p} \frac{ \eta^{p}(y)}{|y^{-1}x|^{Q+ps}}\left[\frac{1-\left(\frac{u(y)+d}{u(x)+d}\right)^{1-p}}{1-\frac{u(y)+d}{u(x)+d}}+c_pl\right] +c_pl^{1-p} \frac{|\eta(x)-\eta(y)|^{p}}{|y^{-1}x|^{Q+ps}}\nonumber\\
		&:=J_1 +c_pl^{1-p} \frac{|\eta(x)-\eta(y)|^{p}}{|y^{-1}x|^{Q+ps}}.
\end{align}
We now aim to estimate $J_1$. Consider the following function
\begin{equation*}
	h(t):=\frac{1-t^{1-p}}{1-t}=-\frac{p-1}{1-t} \int_{t}^{1} \tau^{-p} d\tau, \quad \forall t \in(0,1).
\end{equation*}

Since, the function $h_1(t)=\frac{1}{1-t} \int_{t}^{1} \tau^{-p} d\tau$ is decreasing in $t\in(0,1)$, we have $h$ is increasing in $t\in(0,1)$. Thus, we have
\begin{equation*}
	h(t) \leq-(p-1),~\forall\, t\in(0,1).
\end{equation*}
{\bf{Case-1:}} $0<t\leq\frac{1}{2}$.\\
In this case, 
\begin{equation*}
	h(t) \leq-\frac{p-1}{2^{p}} \frac{t^{1-p}}{1-t}.
\end{equation*}
For $t=\frac{u(y)+d}{u(x)+d} \in(0,1 / 2]$, i.e. for $u(y)+d \leq \frac{u(x)+d}{2}$, we get 
\begin{equation}\label{k-3.7}
	J_{1} \leq \left(c_pl-\frac{p-1}{2^{p}}\right)\left[\frac{u(x)-u(y)}{u(y)+d}\right]^{p-1}\frac{ \eta^{p}(y)}{|y^{-1}x|^{Q+ps}},
\end{equation}
since $$(u(x)-u(y))\left(\frac{(u(y)+d)^{p-1}}{(u(x)+d)^{p}} \right)=\left(\frac{u(y)+d}{u(x)+d}\right)^{p-1} - \left(\frac{u(y)+d}{u(x)+d}\right)^{p}\leq 1.$$
On choosing $l$ as
\begin{equation}\label{k-3.8}
	l=\frac{p-1}{2^{p+1} c_p} \left( =\frac{1}{2^{p+1} \Gamma (\text{max} \{1, p-2\})}<1\right),
\end{equation}
we obtain
\begin{equation*}
	J_{1} \leq-\frac{p-1}{2^{p+1}} \left[\frac{u(x)-u(y)}{u(y)+d}\right]^{p-1}\frac{ \eta^{p}(y)}{|y^{-1}x|^{Q+ps}}.
\end{equation*}

{\bf{Case-2:}} $\frac{1}{2}<t<1$.\\
Again choosing, $t=\frac{u(y)+d}{u(x)+d} \in(1 / 2,1)$, i.e. $u(y)+d>\frac{u(x)+d}{2}$, we obtain

\begin{align}\label{k-3.9}
	J_{1} &\leq [c_pl-(p-1)]\left[\frac{u(x)-u(y)}{u(x)+d}\right]^{p} \frac{ \eta^{p}(y)}{|y^{-1}x|^{Q+ps}}\nonumber\\
	&- \frac{\left(2^{p+1}-1\right)(p-1)}{2^{p+1}}\left[\frac{u(x)-u(y)}{u(x)+d}\right]^{p} \frac{ \eta^{p}(y)}{|y^{-1}x|^{Q+ps}}
\end{align}
for the choice of $l$ as in \eqref{k-3.8}.

We note that, for $2(u(y)+d)<u(x)+d$, we have 
\begin{equation}\label{k-3.10}
	\left[\log \left(\frac{u(x)+d}{u(y)+d}\right)\right]^{p} \leq c_p\left[\frac{u(x)-u(y)}{u(y)+d}\right]^{p-1},
\end{equation}
and, for $2(u(y)+d) \geq u(x)+d,$ we derive
\begin{equation}\label{k-3.11}
	\left[\log \left(\frac{u(x)+d}{u(y)+d}\right)\right]^{p}=\left[\log \left(1+\frac{u(x)-u(y)}{u(y)+d}\right)\right]^{p} \leq 2^{p}\left(\frac{u(x)-u(y)}{u(x)+d}\right)^{p},
\end{equation}
by using $u(x)>u(y)$ and $\log (1+x)\leq x, ~ \forall x\geq 0$.

Thus, from the estimates \eqref{k-3.6}, \eqref{k-3.7}, \eqref{k-3.9}, \eqref{k-3.10} and \eqref{k-3.11}, we obtain
	\begin{align*}
		&\frac{|u(x)-u(y)|^{p-2}(u(x)-u(y))}{|y^{-1}x|^{Q+ps}}\left[\frac{\eta^{p}(x)}{(u(x)+d)^{p-1}}-\frac{\eta^{p}(y)}{(u(y)+d)^{p-1}}\right]\\
		& \leq-\frac{1}{c_p}\left[\log \left(\frac{u(x)+d}{u(y)+d}\right)\right]^{p} \frac{ \eta^{p}(y)}{|y^{-1}x|^{Q+ps}}+c_pl^{1-p} \frac{|\eta(x)-\eta(y)|^{p}}{|y^{-1}x|^{Q+ps}}.
	\end{align*}
This is true also for  $u(y)>u(x)$ by exchanging $x$ and $y$. The case $u(x)=u(y)$ holds trivially. Thus, we can estimate $I_1$ in \eqref{k-I1} as
	\begin{align}\label{k-3.12}
		I_{1} \leq &-\frac{1}{c(p)} \int_{B_{2 r}} \int_{B_{2 r}} \left|\log \left(\frac{u(x)+d}{u(y)+d}\right)\right|^{p} \frac{ \eta^{p}(y)}{|y^{-1}x|^{Q+ps}}dxdy\nonumber \\
		&+c(p) \int_{B_{2 r}} \int_{B_{2 r}} \frac{|\eta(x)-\eta(y)|^{p}}{|y^{-1}x|^{Q+ps}} dxdy,
	\end{align}
	for some constant $c(p)$ depending on the choice of $l$.
	
We will now estimate the term $I_{2}$ in \eqref{k-I2}. Observe that $u(y)\geq 0$ for all  $y \in B_{R}$. Thus, using $(u(x)-u(y))_{+}\leq u(x)$, we get
\begin{equation}\label{4.40}
	\frac{(u(x)-u(y))_{+}^{p-1}}{(d+u(x))^{p-1}} \leq 1,~\forall\, x \in B_{2 r}, \,y \in B_{R}.
\end{equation}
On the other hand for $y \in \Omega\setminus B_{R}$, we have
\begin{equation}\label{4.41}
	(u(x)-u(y))_{+}^{p-1} \leq 2^{p-1}\left[u^{p-1}(x)+(u(y))_{-}^{p-1}\right],~\forall\, x\in B_{2r}.
\end{equation}
Then using the inequalities \eqref{4.40} and \eqref{4.41}, we obtain
	\begin{align}\label{k-3.13}
		I_{2} \leq & 2 \int_{B_{R} \setminus B_{2 r}} \int_{B_{2 r}} (u(x)-u(y))_{+}^{p-1}(d+u(x))^{1-p} \frac{ \eta^{p}(x)}{|y^{-1}x|^{Q+ps}}dxdy\nonumber \\
		&+2 \int_{\mathbb{G} \setminus B_{R}} \int_{B_{2 r}} (u(x)-u(y))_{+}^{p-1}(d+u(x))^{1-p} \frac{ \eta^{p}(x)}{|y^{-1}x|^{Q+ps}}dxdy\nonumber \\
		\leq & C(p)\int_{\mathbb{G}\setminus B_{2 r}} \int_{B_{2 r}} \frac{\eta^{p}(x)}{|y^{-1}x|^{Q+ps}}dxdy+C'(p)d^{1-p} \int_{\mathbb{G} \setminus B_{R}} \int_{B_{2 r}} \frac{(u(y))_{-}^{p-1}}{|y^{-1}x|^{Q+ps}} dxdy\nonumber\\
		&\leq C(p) \sup _{x \in B_{2r}} r^{Q} \int_{\mathbb{G}\setminus B_{2 r}} \frac{dy}{|y^{-1}x|^{Q+ps}}+C'(p)d^{1-p}\left|B_{r}\right| \int_{\mathbb{G} \setminus B_{R}} \frac{(u(y))_{-}^{p-1}}{\left|y^{-1}x_{0}\right|^{Q+ps}}dy\nonumber\\
		&\leq C(p) r^{Q-ps} + C'(p)d^{1-p} \frac{r^{Q}}{R^{s p}}\left[\operatorname{Tail}\left(u_{-} ; x_{0}, R\right)\right]^{p-1}\nonumber\\
		&\leq C(p) \int_{B_{2 r}} \int_{B_{2 r}} \frac{|\eta(x)-\eta(y)|^{p}}{|y^{-1}x|^{Q+ps}} dxdy + C(p) r^{Q-ps} + C'(p)d^{1-p} \frac{r^{Q}}{R^{s p}}\left[\operatorname{Tail}\left(u_{-} ; x_{0}, R\right)\right]^{p-1},	
	\end{align}
	for some constants $C(p), C'(p)$ depending on $p$.
	
Therefore, by using \eqref{k-3.12} and \eqref{k-3.13} in \eqref{k-3.5}, we get

	\begin{align}\label{k-3.16}
	\int_{B_{2 r}} \int_{B_{2 r}} &\left|\log \left(\frac{u(x)+d}{u(y)+d}\right)\right|^{p} \frac{\eta^{p}(y)}{|y^{-1}x|^{Q+ps}} dxdy\nonumber \\
	\leq & C \int_{B_{2 r}} \int_{B_{2 r}} \frac{|\eta(x)-\eta(y)|^{p}}{|y^{-1}x|^{Q+ps}} dxdy\nonumber \\
	&+C d^{1-p} r^{Q} R^{-ps}\left[\operatorname{Tail}\left(u_{-} ; x_{0}, R\right)\right]^{p-1}+C r^{Q-ps}.
\end{align}
Again, by using $|\nabla_H\eta|\leq Cr^{-1}$, we have
\begin{align}\label{k-3.17}
	\int_{B_{2 r}} \int_{B_{2 r}} \frac{|\eta(x)-\eta(y)|^{p}}{|y^{-1}x|^{Q+ps}}dxdy & \leq Cr^{-p} \int_{B_{2 r}} \int_{B_{2 r}}|y^{-1}x|^{-Q+p(1-s)}dxdy \leq \frac{C}{p(1-s)} r^{-s p}\left|B_{2 r}\right|.
\end{align}
Therefore, the logarithmic estimate \eqref{k-log-ineq} follows from \eqref{k-3.16} and \eqref{k-3.17}.
\end{proof}

We have now all the ingredients to state the following strong minimum principle.
\begin{theorem}[Strong minimum principle]\label{min p}
Let $0<s<1<p<\infty$ and let $\Omega \subset {\mathbb{G}}$ be an open, connected and bounded subset of a stratified Lie group $\mathbb{G}$. Assume that $u \in X_0^{s, p}(\Omega)$ is a weak supersolution of \eqref{p-kuusi} such that $u \geq 0$ in $\Omega$ and $u \not\equiv 0$ in $\Omega.$ Then $u>0$ a.e. in $\Omega$.	
\end{theorem}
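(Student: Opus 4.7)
The plan is to combine the logarithmic estimate of Lemma \ref{k-log-lemma} with a connectedness argument, adapting the strategy of Di Castro--Kuusi--Palatucci \cite{DKP16} from the Euclidean setting to our subelliptic framework.

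First, I exploit a crucial simplification of Lemma \ref{k-log-lemma}. Since $u \in X_0^{s,p}(\Omega)$ is extended by zero outside $\Omega$ and $u \geq 0$ inside $\Omega$, the negative part $u_-$ vanishes identically on $\mathbb{G}$, so $Tail(u_-, x_0, R) = 0$ for every $x_0 \in \Omega$ and every admissible $R$. Choosing any quasi-ball $B_R(x_0) \subset \Omega$ and any $B_r(x_0) \subset B_{R/2}(x_0)$, Lemma \ref{k-log-lemma} therefore reduces to the $d$-uniform estimate
\begin{equation*}
\int_{B_r}\int_{B_r}\left|\log\frac{u(x)+d}{u(y)+d}\right|^p \frac{dxdy}{|y^{-1}x|^{Q+ps}} \leq C\, r^{Q-ps}, \qquad \text{for every } d > 0,
\end{equation*}
with $C$ independent of $d$. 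This uniformity in $d$ will be the workhorse of the argument.

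The second step is a local dichotomy: on any such ball $B_r(x_0)$, either $u = 0$ almost everywhere or $u > 0$ almost everywhere. Assume for contradiction that both $Z := \{u = 0\} \cap B_r(x_0)$ and $P := \{u > 0\} \cap B_r(x_0)$ have positive Haar measure. For every $(x,y) \in Z \times P$ the integrand equals $|y^{-1}x|^{-(Q+ps)} \bigl|\log\frac{d}{u(y)+d}\bigr|^p$, which diverges to $+\infty$ pointwise as $d \to 0^+$. Since $|Z \times P| > 0$ and the kernel is positive off the diagonal (a null set in the product), Fatou's lemma forces the left-hand side of the displayed inequality to diverge as $d \to 0^+$, contradicting the uniform upper bound. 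I anticipate this Fatou step, linking pointwise blow-up of the logarithmic integrand to divergence of the whole integral, to be the main delicate point.

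The final step promotes this local dichotomy to a global statement via connectedness. I define
\begin{equation*}
A := \{x_0 \in \Omega : u = 0 \text{ a.e. on some ball } B_r(x_0) \subset \Omega\}, \quad B := \{x_0 \in \Omega : u > 0 \text{ a.e. on some ball } B_r(x_0) \subset \Omega\}.
\end{equation*}
Both sets are open by construction and disjoint, and the local dichotomy shows that $A \cup B = \Omega$. The hypothesis $u \not\equiv 0$ implies that $\{u > 0\}$ has positive measure; any Lebesgue density point of this set belongs to $B$ by the dichotomy applied to arbitrarily small balls, so $B$ is nonempty. The connectedness of $\Omega$ then forces $A = \emptyset$ and $B = \Omega$, which yields $u > 0$ a.e.\ in $\Omega$.
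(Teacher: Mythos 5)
Your proof is correct and relies on the same key ingredient (Lemma~\ref{k-log-lemma} with $Tail(u_-,\cdot,\cdot)=0$ giving a $d$-uniform logarithmic bound) and the same global strategy (a local alternative obtained by letting $d\to 0^+$, then propagated by connectedness), so the approaches are essentially the same; the differences are a streamlining of two steps. First, you extract the local dichotomy by applying Fatou's lemma directly to the double integral restricted to $Z\times P$, where the integrand blows up pointwise as $d\to 0^+$; the paper instead uses that $F_d(y)=0$ for $y\in Z$, with $F_d(x):=\log(1+u(x)/d)$, to produce a local Poincar\'e-type inequality \eqref{bf-A5}, obtains a $d$-uniform bound on $\int_{B_{r/2}}|F_d|^p$, and only then sends $d\to 0^+$ against that single integral. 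Both roads lead to the same conclusion; yours is a bit shorter, while the paper's keeps the quantitative Poincar\'e constant $r^{Q+ps}/|Z|$ explicit. Second, you replace the paper's exhaustion by compact connected subsets covered by a chain of overlapping quasi-balls with a cleaner open-partition argument: $A,B$ open, disjoint, $A\cup B=\Omega$ by the dichotomy, $B\neq\emptyset$ from $u\not\equiv 0$, whence $B=\Omega$ by connectedness. One minor point worth spelling out is that $B\neq\emptyset$ can be seen without invoking a Lebesgue differentiation theorem on $\mathbb{G}$: since $|\{u>0\}|>0$, a countable cover of $\Omega$ by admissible small quasi-balls contains at least one ball on which $\{u>0\}$ has positive measure, and the dichotomy then places its center in $B$; also, the openness of $A$ and $B$ uses the quasi-triangle inequality for the homogeneous quasi-norm with its constant, which is fine but deserves a line. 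Overall the argument is sound and arguably tidier than the original.
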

\begin{proof}
	
	Suppose for a moment that $u>0$ a.e. in $K$ for every connected and compact subset of $\Omega$. Since $\Omega$ is connected and $u\not\equiv0$ in $\Omega$, there exists a sequence of compact and connected sets $K_j\subset\Omega$ such that	
	\begin{equation*}
		\left|\Omega \backslash K_{j}\right|<\frac{1}{j}~\text { and }~ u \not \equiv 0 ~\text { in }~ K_{j}.
	\end{equation*}
Thus $u>0$ a.e. in $K_j$ for all $j$. Now passing to the limit as $j\rightarrow\infty$, we get that $u>0$ a.e. $\Omega$. Thus it enough to prove the result stated in the lemma for compact and connected subsets of $\Omega$. Since $K \subset \Omega$ is compact and connected, then there exists $r>0$ such that $K \subset\{x \in \Omega: \operatorname{dist}_{cc}(x, \partial \Omega)>2 r\}$. Again, using the compactness, there exist $x_i\in K$, $i=1,2,...,k,$ such that the quasi-balls $B_{r / 2}\left(x_{1}\right), \ldots B_{r / 2}\left(x_{k}\right)$ cover $K$ and	
	\begin{equation}\label{bf-A4}
		\left|B_{r / 2}\left(x_{i}\right) \cap B_{r / 2}\left(x_{i+1}\right)\right|>0, \quad i=1, \ldots, k-1.
	\end{equation}
	Suppose that $u$ vanishes on a subset of $K$ with positive measure. Then with the help of \eqref{bf-A4}, we conclude that there exists $i \in\{1, \ldots, k-1\}$ such that
	\begin{equation*}
		|Z|:=|\left\{x \in B_{r / 2}\left(x_{i}\right): u(x)=0\right\}|>0.
	\end{equation*}
 For $d>0$ and $x \in B_{r / 2}\left(x_{i}\right)$, define	
	\begin{equation*}
		F_d(x)=\log \left(1+\frac{u(x)}{d}\right).
	\end{equation*}
	Observe that for every $x\in Z$ we have 	
	\begin{equation*}
		F_d(x)=0.
	\end{equation*}
	Thus for every $x \in B_{r/2}\left(x_{i}\right)$ and $y\in Z$ with $x\neq y$ we get	
	\begin{equation*}
		\left|F_d(x)\right|^{p}=\frac{\left|F_d(x)-F_d(y)\right|^{p}}{|y^{-1}x|^{Q+ps}}|y^{-1}x|^{Q+ps} .
	\end{equation*}
	Integrating with respect to $y \in Z$, we get	
		\begin{equation*}
		|Z|\left|F_d(x)\right|^{p} \leq\left(\max _{x, y \in B_{r / 2}\left(x_{i}\right)}|y^{-1}x|^{Q+ps}\right) \int_{B_{r / 2}\left(x_{i}\right)} \frac{\left|F_d(x)-F_d(y)\right|^{p}}{|y^{-1}x|^{Q+ps}} d y.
	\end{equation*}
	 Again integrating with respect to $x \in B_{r / 2}\left(x_{i}\right)$ we deduce the following local Poincar\'{e} inequality:	
		\begin{equation}\label{bf-A5}
		\int_{B_{r/2}\left(x_{i}\right)}\left|F_{d}\right|^{p}dx\leq \frac{r^{Q+ps}}{|Z|} \int_{B_{r/2}\left(x_{i}\right)} \int_{B_{r/2}\left(x_{i}\right)} \frac{\left|F_{d}(x)-F_{d}(y)\right|^{p}}{|y^{-1}x|^{Q+ps}} dx dy.
	\end{equation}
Observe that
\begin{equation*}
	\left|\log \left(\frac{d+u(x)}{d+u(y)}\right)\right|^{p}=\left|F_d(x)-F_d(y)\right|^{p}.
\end{equation*}
Plugging the logarithmic estimate \eqref{k-log-ineq} into the above Poincar\'{e} inequality \eqref{bf-A5} by using the fact that $u_{-}=0$ (hence $Tail(u_{-},x_i,R)=0$), we get 
	\begin{equation}\label{bf-A6}
		\int_{B_{r/2}\left(x_{i}\right)}\left|\log \left(1+\frac{u(x)}{d}\right)\right|^{p} dx \leq C \frac{r^{2Q}}{|Z|}.
	\end{equation}
	Now taking limit $d\rightarrow0$ in \eqref{bf-A6}, we obtain $u=0$ a.e. in $B_{r/2}\left(x_{i}\right).$ Thanks to \eqref{bf-A4}, by repeating this arguments in the quasi-balls $B_{r / 2}\left(x_{i-1}\right)$ and $B_{r / 2}\left(x_{i+1}\right)$ and so on we obtain that $u\equiv0$ a.e. on $K$. This is a contradiction and hence $u>0$ a.e. in $K$. This completes the proof.
\end{proof}

\begin{lemma}\label{ll-t17}
	Let $0<s<1<p<\infty.$ Assume that $\Omega\subset{\mathbb{G}}$ is a bounded domain. Let $u$ be an eigenfunction of \eqref{l-eigen} corresponding to $\nu\neq\lambda_1(\Omega)$. Then we have $\nu(\Omega)>\lambda_1(\Omega_{+})$ and $\nu(\Omega)>\lambda_1(\Omega_{-})$, where $\Omega_{+}=\{u>0\}$ and $\Omega_{-}=\{u<0\}$. In particular,	
	\begin{equation}\label{ll-ineq}
	\nu \geq C(N, p, s)\left|\Omega_{+}\right|^{-\frac{ps}{Q}} \text { and } \nu \geq C(Q, p,s) \left|\Omega_{-}\right|^{-\frac{ps}{Q}}.
	\end{equation}
\end{lemma}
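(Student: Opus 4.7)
Since the hypothesis $\nu \neq \lambda_1(\Omega)$ combined with Theorem~\ref{sign-changing} forces $u$ to change sign, both $\Omega_+$ and $\Omega_-$ carry positive Haar measure. The plan is to treat $\Omega_+$ first (the argument on $\Omega_-$ follows by replacing $u$ with $-u$): test the weak eigenvalue identity \eqref{ev-soln} against $u_+ := \max\{u,0\}$, use an elementary pointwise inequality to bound the Gagliardo seminorm $[u_+]_{s,p,\mathbb{G}}$, and then recognize the resulting quotient as a competitor in the variational characterization \eqref{ev-1} of $\lambda_1(\Omega_+)$. The first step is to observe that $u_+ \in X_0^{s,p}(\Omega_+)$ since $u \in X_0^{s,p}(\Omega)$ and $u_+$ vanishes off $\Omega_+$, so $u_+$ is legitimately admissible both as a test function and in the Rayleigh quotient for $\lambda_1(\Omega_+)$.

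Next, I will invoke the pointwise algebraic inequality
\[
|a-b|^{p-2}(a-b)(a_+ - b_+) \geq |a_+ - b_+|^{p}, \quad a,b \in \mathbb{R},
\]
which holds with equality when $a$ and $b$ share the same sign or one vanishes, and is \emph{strict} whenever $a$ and $b$ have opposite strict signs (indeed, for $a>0>b$ the left-hand side is $(a-b)^{p-1}a$ while the right-hand side is $a^p$, and $a-b>a>0$). Substituting $a=u(x), b=u(y)$, dividing by the kernel $|y^{-1}x|^{Q+ps}$, integrating over $\mathbb{G}\times\mathbb{G}$, and using \eqref{ev-soln} with the test function $u_+$ yields
\[
[u_+]_{s,p,\mathbb{G}}^{\,p} \;\leq\; \nu \, \|u_+\|_{L^p(\Omega)}^{\,p}.
\]
To upgrade this to a \emph{strict} inequality I use the sign-changing property: the product set $\{u>0\}\times\{u<0\}\subset \mathbb{G}\times\mathbb{G}$ has positive measure, and the pointwise inequality is strict with a positive defect on it. The integrand is dominated by $|u(x)-u(y)|^p/|y^{-1}x|^{Q+ps}$ (an integrable function by membership in $X_0^{s,p}$), so the strict pointwise gap passes under the integral, giving $[u_+]_{s,p,\mathbb{G}}^{\,p} < \nu\, \|u_+\|_{L^p(\Omega)}^{\,p}$. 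Dividing and using the variational characterization of $\lambda_1(\Omega_+)$ delivers $\lambda_1(\Omega_+) < \nu$, and the same argument applied to $u_-$ on $\Omega_-$ yields $\lambda_1(\Omega_-) < \nu$.

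For the quantitative bound \eqref{ll-ineq} I combine $\nu > \lambda_1(\Omega_\pm)$ with the explicit lower bound $\lambda_1(\Omega_\pm) \geq C(Q,p,s)\,|\Omega_\pm|^{-ps/Q}$ from Theorem~\ref{ev-thm1}; the Sobolev--Hölder chain \eqref{dadaineq} used there is universal and applies verbatim on the bounded (measurable) sets $\Omega_+$ and $\Omega_-$. The step I expect to be the main obstacle is the rigorous passage from the pointwise strict inequality to the integral one: one must be careful that $\Omega_+$ is only measurable (not a priori open), that $u_+$ genuinely lies in the space over which $\lambda_1(\Omega_+)$ is defined by \eqref{ev-1}, and that the cross-sign region contributes a strictly positive mass to the Gagliardo integral rather than being absorbed on a null set after integration against the kernel $|y^{-1}x|^{-Q-ps}$.
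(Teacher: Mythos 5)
Your proof follows the same strategy as the paper: test the weak formulation \eqref{ev-soln} with $\phi=u_+$, use the pointwise inequality $|a-b|^{p-2}(a-b)(a_+-b_+)\geq|a_+-b_+|^p$ to lower-bound the energy by $[u_+]_{s,p,\mathbb{G}}^p$, recognize the Rayleigh quotient of $u_+$ as a competitor for $\lambda_1(\Omega_+)$, and conclude the quantitative bound \eqref{ll-ineq} from the Sobolev--H\"older inequality \eqref{dadaineq}. The only (small) divergence is in how strictness is obtained: the paper retains an explicit quantitative cross-sign remainder $2^{p/2}\iint (u_+(y)u_-(x))^{p/2}|y^{-1}x|^{-Q-ps}\,dx\,dy$ on the right-hand side, while you argue qualitatively that the pointwise inequality is strict on the positive-measure set $\Omega_+\times\Omega_-$ and that integrability lets the strict gap survive integration; the two routes are equally valid, and your qualitative version is arguably more robust since it bypasses the precise constant in the defect estimate.
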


\begin{proof}
	Since $\nu\neq\lambda_1(\Omega)$, then $u$ must be sign-changing. On testing the equation \eqref{ev-soln} with $\phi=u_{+}$ we obtain

	\begin{align*}
		\nu \int_{\Omega_{+}}\left|u_{+}\right|^{p} dx &\geq  \iint_{\mathbb{G} \times \mathbb{G}} \frac{\left|u_{+}(x)-u_{+}(y)\right|^{p}}{|y^{-1}x|^{Q+ps}} dxdy +2^{p/2} \iint_{\mathbb{G}\times\mathbb{G}} \frac{\left(u_{+}(y) u_{-}(x)\right)^{\frac{p}{2}}}{|y^{-1}x|^{Q+ps}} dxdy.
	\end{align*}
	Dividing both sides by $\int_{\Omega_{+}}\left|u_{+}(x)\right|^{p} dx$, we have
	\begin{align*}
		\nu &\geq \lambda_{1}\left(\Omega_{+}\right) + 2^{p / 2} \frac{\iint_{\mathbb{G}\times\mathbb{G}} \frac{\left(u_{+}(y) u_{-}(x)\right)^{\frac{p}{2}}}{|y^{-1}x|^{Q+ps}} d x d y}{\int_{\Omega_{+}}\left|u_{+}(x)\right|^{p} dx}.
	\end{align*}
 Therefore, we get $\nu>\lambda_{1}\left(\Omega_{+}\right)$. Inequality \eqref{dadaineq} yields that 
 \begin{align}
     \nu \int_{\Omega_{+}}\left|u_{+}\right|^{p} dx &\geq  \iint_{\mathbb{G} \times \mathbb{G}} \frac{\left|u_{+}(x)-u_{+}(y)\right|^{p}}{|y^{-1}x|^{Q+ps}} dxdy \geq C |\Omega_+|^{-\frac{ps}{Q}} \int_{\Omega_+} |u_+(x)|^p dx
 \end{align}
 and dividing by $\int_{\Omega_+} |u_+(x)|^p dx$ we deduce $\nu \geq C(N,p,s)\left|\Omega_{+}\right|^{-\frac{ps}{Q}}.$ 
 
 Similarly, we can deduce $\nu>\lambda_{1}\left(\Omega_{-}\right)$ and $\nu \geq C\left|\Omega_{-}\right|^{-\frac{ps}{Q}}$. This completes the proof.	\end{proof}

\begin{lemma}\label{isolated}
	Let $0<s<1<p<\infty.$ Assume that $\Omega\subset{\mathbb{G}}$ is bounded. Then the first eigenvalue $\lambda_1(\Omega)$ of \eqref{l-eigen} is isolated.
\end{lemma}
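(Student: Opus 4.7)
I would prove the lemma by contradiction, combining the uniform lower bound on nodal sets in Lemma \ref{ll-t17} with the compact embedding of Theorem \ref{l-3}, the simplicity result of Lemma \ref{l-simple}, and the strict positivity of $\phi_1$ granted by Theorem \ref{min p}.

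The starting point is to suppose that there exists a sequence of eigenvalues $\nu_k$ of \eqref{l-eigen} with $\nu_k>\lambda_1(\Omega)$ and $\nu_k\searrow\lambda_1(\Omega)$, and to pick corresponding eigenfunctions $u_k\in X_0^{s,p}(\Omega)$ normalized by $\|u_k\|_{L^p(\Omega)}=1$. Testing \eqref{ev-soln} against $u_k$ itself gives $[u_k]_{s,p,\mathbb{G}}^p=\nu_k$, so $\{u_k\}$ is bounded in $X_0^{s,p}(\Omega)$. The reflexivity of $X_0^{s,p}(\Omega)$ (Lemma \ref{l-2}) combined with the compact embedding $X_0^{s,p}(\Omega)\hookrightarrow L^p(\Omega)$ of Theorem \ref{l-3} then yields a subsequence (still labelled $u_k$) such that $u_k\rightharpoonup u_\infty$ weakly in $X_0^{s,p}(\Omega)$, $u_k\to u_\infty$ strongly in $L^p(\Omega)$, and $u_k\to u_\infty$ almost everywhere in $\Omega$, with $\|u_\infty\|_{L^p(\Omega)}=1$.

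Next I would identify $u_\infty$ as a first eigenfunction. By the weak lower semicontinuity of the Gagliardo seminorm,
$$[u_\infty]_{s,p,\mathbb{G}}^p\leq \liminf_{k\to\infty}[u_k]_{s,p,\mathbb{G}}^p=\lim_{k\to\infty}\nu_k=\lambda_1(\Omega),$$
so the Rayleigh quotient satisfies $\mathcal{R}(u_\infty)\leq\lambda_1(\Omega)$ and, by the variational characterization \eqref{ev-1}, $u_\infty$ must be a minimizer. Simplicity (Lemma \ref{l-simple}) together with the strict positivity of $\phi_1$ from Theorem \ref{min p} then forces $u_\infty=c\,\phi_1$ for some $c\neq 0$; replacing $u_k$ by $-u_k$ if needed (which remains an eigenfunction for $\nu_k$), we may assume $u_\infty>0$ a.e. in $\Omega$.

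Finally I would close the contradiction by juxtaposing a measure-theoretic lower bound with a measure-theoretic vanishing statement. Because $\nu_k>\lambda_1(\Omega)$, Theorem \ref{sign-changing} forces each $u_k$ to change sign, and Lemma \ref{ll-t17} delivers the uniform estimate
$$|\{u_k<0\}|\geq \left(\frac{C(Q,p,s)}{\nu_k}\right)^{Q/(ps)}\geq\left(\frac{C(Q,p,s)}{2\lambda_1(\Omega)}\right)^{Q/(ps)}>0$$
for all sufficiently large $k$. On the other hand, since $u_\infty>0$ a.e. and $u_k\to u_\infty$ a.e., at every such point $x$ we have $u_k(x)>0$ eventually, hence $\chi_{\{u_k<0\}}(x)\to 0$ a.e., and the dominated convergence theorem (with dominating function $\chi_\Omega$) gives $|\{u_k<0\}|\to 0$, which contradicts the uniform lower bound. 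The one non-routine point is showing that $u_\infty$ is actually a first eigenfunction without strong $X_0^{s,p}$-convergence of the nonlinear operator $(-\Delta_{p,{\mathbb{G}}})^s u_k$; the plan bypasses this by relying only on lower semicontinuity of the seminorm and the Rayleigh-quotient characterization of $\lambda_1(\Omega)$, which is enough to conclude that $u_\infty$ minimizes and is therefore a first eigenfunction to which simplicity applies.
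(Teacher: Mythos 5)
Your proposal is correct and follows essentially the same approach as the paper: a contradiction argument using the compact embedding to extract an a.e.\ convergent subsequence, Fatou/lower semicontinuity to identify the limit as a first eigenfunction, the sign-changing property (Theorem \ref{sign-changing}) together with the measure estimate of Lemma \ref{ll-t17} to get a uniform lower bound on the negative nodal set, and finally a contradiction with a.e.\ convergence to a strictly positive first eigenfunction. You present the closing step somewhat more carefully than the paper (invoking simplicity and the strong minimum principle to fix $u_\infty>0$ a.e., and spelling out the dominated convergence argument for $|\{u_k<0\}|\to 0$), but the underlying argument is the same.
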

\begin{proof}
	We will prove it by contradiction. Let $\{\nu_{k}\}$ be a sequence of eigenvalues converging to $\lambda_{1}$ such that $\nu_{k} \neq \lambda_{1}$. Suppose that $u_{k}$ is the eigenfunction corresponding to $\nu_k$. Without loss of generality, we may assume that $\|u_{k}\|_p=1.$ Then we have
	\begin{equation*}
	 \nu_{k}=\int_{\Omega\times\Omega} \frac{\left|u_{k}(x)-u_{k}(y)\right|^{p}}{|y^{-1}x|^{Q+ps}} dxdy.
	\end{equation*}
By Theorem \ref{l-3}, there exists $u \in$ $X_0^{s, p}(\Omega)$ such that, up to a subsequence
	
	\begin{equation*}
	u_{k} \rightarrow u ~\text {strongly in }~ L^{p}\left(\Omega\right)~\text {and}~u_{k}(x)\rightarrow u(x) ~\text {point-wise a.e. in }~\Omega.
	\end{equation*}
	Then by applying Fatou's lemma, we get	
	\begin{equation*}
	\frac{\iint_{\mathbb{G}\times\mathbb{G}} \frac{|u(y)-u(x)|^{p}}{|y^{-1}x|^{Q+ps}} dxdy}{\int_{\Omega}|u(x)|^{p} dx} \leq \lim _{k\rightarrow \infty} \nu_{k}=\lambda_1(\Omega).
	\end{equation*}
	Hence we can conclude that $u$ coincides with the first eigenfunction. Theorem \ref{main-thm} infers that $u$ cannot change sign. Thus either $u>0$ in $\Omega$ or $u<0$ in $\Omega$. Thanks to Theorem \ref{ll-t17}, we conclude that $u_{k}$ must change signs in $\Omega$, since $\nu_{k}>\lambda_1(\Omega)$. Therefore, the sets $\Omega^{k}_{\pm}\neq\emptyset$ are with positive measure, where
	
	\begin{equation*}
	\Omega^{k}_{+}=\left\{x\in\Omega: u_{k}(x)>0\right\} \text { and } \Omega^{k}_{-}=\left\{x\in\Omega: u_{k}(x)<0\right\}.
	\end{equation*}
 From the estimate \eqref{ll-ineq}, we have
	
	\begin{equation*}
	\nu_{k} \geq \lambda_{1}\left(\Omega^{k}_{+}\right) \geq C\left|\Omega^{k}_{+}\right|^{-\frac{ps}{Q}}~\text{and}~
	\nu_{k} \geq \lambda_{1}\left(\Omega^{k}_{-}\right) \geq C\left|\Omega^{k}_{-}\right|^{-\frac{ps}{Q}}.
	\end{equation*}
	This implies that
	\begin{equation*}
	|\Omega_{\pm}|=|\lim \sup \Omega^{k}_{\pm}|>0.
	\end{equation*}
	Therefore, letting $k\rightarrow\infty$, we get that $u \geq 0$ in $\Omega_{+}$ and $u \leq 0$ in $\Omega_{-}$. Thus we arrive at a contradiction that $u$ is a first eigenfunction.
\end{proof}

{{\it{Proof of Theorem \ref{ev-mainthm}:}}} The proof immediately follows from the Theorem \ref{l-simple}, Lemma \ref{sign-changing} and Lemma \ref{isolated}.

%We now list out the properties of the first eigenvalue and eigenfunction studied in this section.
%\begin{theorem}\label{ev-properties}
%	Let $0<s<1<p<\infty$ and $\Omega\subset{\mathbb{G}}$ be bounded domain, then we have the following properties.
%	\begin{enumerate}[label=(\roman*)]
%		\item The first eigenfunction is strictly positive.
%		\item The first eigenvalue $\lambda_1(\Omega)$ is simple and the corresponding eigenfunction $\phi_1$ is the only eigenfunction of constant sign. That is if $u$ is an eigenfunction associated to an eigenvalue $\nu>\lambda_1(\Omega)$, then $u$ must be sign-changing.
%		
%		\item The first eigenvalue $\lambda_1(\Omega)$ is isolated.
%	\end{enumerate}	
%\end{theorem}

\section{Nehari Manifold,  weak formulation and multiplicity result}\label{sec5}

In this section, we use the results established in the previous two sections to study  the existence and multiplicity of weak solutions to the  nonlocal singular subelliptic problem \eqref{problem}. We employ the Nehari manifold technique to establish the multiplicity of solutions. The following subsection is devoted to defining the notion of weak solutions, fibering maps, Nehari manifold and some preliminary results.

\subsection{Weak solution and geometry of Nehari manifold}
Let us now present the notion of a positive weak solution to the problem \eqref{problem}.
\begin{definition}\label{d-weak}
	We say that $u \in X_0^{s,p}(\Omega)$ is a positive weak solution of \eqref{problem} if $u> 0$ on $\Omega$ (i.e. $\operatorname{essinf}_{K} u\geq C_K>0$ for all compact subsets $K\subset \Omega$) and
	\begin{equation}
		\langle\left(-\Delta_{p,{\mathbb{G}}}\right)^s u,\psi\rangle-\lambda\int_{\Omega} f(x)u^{-\delta}\psi dx-\int_{\Omega} g(x)u^{q}\psi d x=0
	\end{equation}
	 for all $\psi\in C_c^{\infty}(\Omega)$
\end{definition}

The energy functional $I_{\lambda}: X_0^{s,p}(\Omega)\rightarrow \mathbb{R}$ associated with the problem \eqref{problem}  is defined as
\begin{equation}\label{d-energy}
	I_{\lambda}(u)=\frac{1}{p} \|u\|_{X_0^{s,p}(\Omega)}^{p}-\frac{\lambda}{1-\delta} \int_{\Omega} f(x)|u|^{1-\delta} dx-\frac{1}{q+1} \int_{\Omega} g(x)|u|^{q+1} d x.
\end{equation}
We note here that due to the presence of the singular exponent $\delta\in(0,1)$, the functional $I_{\lambda}$ is not Fr\'echet differentiable. Also,   it is not bounded from below in $X_0^{s,p}(\Omega)$ as $q>p-1$. The method of Nehari manifold plays an important role to extract critical points of this type of energy functional. We define the Nehari manifold $\mathcal{N_{\lambda}}$ for $\lambda>0$ as
\begin{equation}\label{d-nehari}
	\mathcal{N}_{\lambda}:=\left\{u \in X_0^{s,p}(\Omega)\setminus\{0\}:\left\langle I_{\lambda}^{\prime}(u), u\right\rangle=0\right\}.
\end{equation}
We set
\begin{equation}\label{d-min}
	c_{\lambda}=\inf \left\{I_{\lambda}(u): u \in \mathcal{N}_{\lambda}\right\}.
\end{equation}
It is obvious that $u \in \mathcal{N}_{\lambda}$ if and only if 
\begin{equation}\label{d-nehari1}
	\|u\|_{X_0^{s,p}(\Omega)}^{p}-\lambda \int_{\Omega} f(x)|u|^{1-\delta} dx-\int_{\Omega} g(x)|u|^{q+1} dx=0.
\end{equation}
In the next result we establish the coerciveness and boundedness of the functional $I_\lambda.$ 
\begin{lemma}\label{l-coercive}
	For each $\lambda>0$, the energy $I_{\lambda}$ is coercive and bounded from below on $\mathcal{N}_{\lambda}$.
\end{lemma}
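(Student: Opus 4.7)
The plan is to exploit the Nehari constraint \eqref{d-nehari1} to eliminate the super-linear term from the expression \eqref{d-energy} for $I_\lambda$, reducing the analysis on $\mathcal{N}_\lambda$ to a one-variable estimate of the form $At^p - Bt^{1-\delta}$.

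More precisely, for $u \in \mathcal{N}_\lambda$ I would substitute
\[
\int_\Omega g(x)|u|^{q+1}\, dx \;=\; \|u\|_{X_0^{s,p}(\Omega)}^p - \lambda \int_\Omega f(x)|u|^{1-\delta}\, dx
\]
into \eqref{d-energy} to rewrite
\[
I_\lambda(u) \;=\; \left(\frac{1}{p}-\frac{1}{q+1}\right)\|u\|_{X_0^{s,p}(\Omega)}^p \;-\; \lambda\left(\frac{1}{1-\delta}-\frac{1}{q+1}\right)\int_\Omega f(x)|u|^{1-\delta}\, dx.
\]
Both prefactors are strictly positive, since by hypothesis $1-\delta < 1 < p < q+1$.

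Next, I would bound the singular integral from above using $f \in L^\infty(\Omega)$, H\"older's inequality with exponents $\frac{p}{1-\delta}$ and $\frac{p}{p-(1-\delta)}$, and the continuous embedding $X_0^{s,p}(\Omega) \hookrightarrow L^p(\Omega)$ provided by Theorem \ref{l-3}, to obtain
\[
\int_\Omega f(x)|u|^{1-\delta}\, dx \;\le\; \|f\|_{L^\infty(\Omega)}\, |\Omega|^{1-\frac{1-\delta}{p}}\, \|u\|_{L^p(\Omega)}^{1-\delta} \;\le\; C\, \|u\|_{X_0^{s,p}(\Omega)}^{1-\delta},
\]
with $C = C(Q,s,p,\Omega,\|f\|_\infty)$. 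Combining the two displays, there exist constants $C_1, C_2 > 0$ (independent of $u$) such that for every $u \in \mathcal{N}_\lambda$
\[
I_\lambda(u) \;\ge\; C_1\, \|u\|_{X_0^{s,p}(\Omega)}^p \;-\; \lambda\, C_2\, \|u\|_{X_0^{s,p}(\Omega)}^{1-\delta}.
\]

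Since $p > 1-\delta > 0$, the real function $h(t) = C_1 t^p - \lambda C_2 t^{1-\delta}$ satisfies $h(t) \to +\infty$ as $t \to +\infty$, giving coercivity of $I_\lambda$ on $\mathcal{N}_\lambda$; moreover, $h$ is continuous on $[0,\infty)$ with $h(0)=0$, so it attains a finite global minimum on $[0,\infty)$, yielding a uniform lower bound for $I_\lambda|_{\mathcal{N}_\lambda}$. There is essentially no obstacle here beyond the algebraic manipulation; the only point that requires care is checking that the coefficients produced by the Nehari substitution are both strictly positive, which follows from the ordering $1-\delta < p < q+1$ in the statement of Theorem \ref{main-thm}.
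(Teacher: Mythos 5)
Your proof is correct and matches the paper's argument essentially step for step: both substitute the Nehari constraint \eqref{d-nehari1} into \eqref{d-energy} to eliminate the $\int_\Omega g|u|^{q+1}\,dx$ term, then bound $\int_\Omega f|u|^{1-\delta}\,dx$ by $C\|u\|_{X_0^{s,p}(\Omega)}^{1-\delta}$ via $f\in L^\infty$ and the Sobolev embedding, and conclude from $p>1-\delta>0$. You are slightly more explicit about the H\"older exponents and the elementary behavior of $t\mapsto C_1 t^p - \lambda C_2 t^{1-\delta}$, but the route is the same.
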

\begin{proof}
	By referring  to  the equations \eqref{d-energy} and \eqref{d-nehari1}, we obtain
	
	\begin{align}
		I_{\lambda}(u) &=\frac{1}{p} \|u\|_{X_0^{s,p}(\Omega)}^{p}-\frac{\lambda}{1-\delta} \int_{\Omega} f(x)|u|^{1-\delta} d x-\frac{1}{q+1} \int_{\Omega} g(x)|u|^{q+1} dx\nonumber \\
		&=\left(\frac{1}{p}-\frac{1}{q+1}\right)\|u\|_{X_0^{s,p}(\Omega)}^{p}-\lambda\left(\frac{1}{1-\delta}-\frac{1}{q+1}\right)\int_{\Omega} f(x)|u|^{1-\delta} d x\nonumber\\
		& \geq \left(\frac{1}{p}-\frac{1}{q+1}\right) \|u\|_{X_0^{s,p}(\Omega)}^{p}-c\lambda\|f\|_{\infty}\left(\frac{1}{1-\delta}-\frac{1}{q+1}\right)\|u\|_{X_0^{s,p}(\Omega)}^{1-\delta}.\label{eq3.6}
	\end{align}
	Since $0<1-\delta<1$ and $q+1>p>1$, we conclude that that $I_{\lambda}$ is coercive and bounded from below on $\mathcal{N}_{\lambda}$.	
\end{proof}

Now, we prove the following lemma proceeding as in the proof given in \cite{HSS08}.
\begin{lemma}\label{l-hirano}
	For every non-negative $u\in X_0^{s,p}(\Omega)$ there exists a non-negative, increasing sequence $\{u_n\}$ in $X_0^{s,p}(\Omega)$ with each $u_n$ having compact support in $\Omega$ such that $u_n\rightarrow u$ strongly in $X_0^{s,p}(\Omega)$.		
\end{lemma}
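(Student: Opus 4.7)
The natural approach is to construct $u_n$ as $u$ multiplied by an increasing family of smooth compactly supported cutoff functions that eventually exhaust $\Omega$. Define $\Omega_n := \{x \in \Omega : \operatorname{dist}_{cc}(x, \partial \Omega) > 1/n\}$, an increasing exhaustion of $\Omega$ by open sets with $\overline{\Omega_n} \Subset \Omega$. I would pick $\eta_n \in C_c^{\infty}(\Omega)$ such that $0 \le \eta_n \le \eta_{n+1} \le 1$, $\eta_n \equiv 1$ on $\Omega_n$, and $\operatorname{supp}(\eta_n) \Subset \Omega$, and set $u_n := \eta_n u$.

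Because $u \ge 0$ and $(\eta_n)$ is non-negative and pointwise increasing, the sequence $(u_n)$ is non-negative, pointwise monotone increasing, and satisfies $u_n \le u$; each $u_n$ is supported in $\operatorname{supp}(\eta_n)$, which is compact in $\Omega$; and $u_n(x) \to u(x)$ for a.e. $x \in \Omega$. Membership $u_n \in X_0^{s,p}(\Omega)$ follows from $\eta_n \in C_c^{\infty}(\Omega)$ and the standard product-rule estimate for the Gagliardo seminorm. The $L^p(\Omega)$-convergence is immediate from dominated convergence with dominator $u$.

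For strong convergence in $X_0^{s,p}(\Omega)$ I would reduce to the case of smooth compactly supported approximants by density. Given $\varepsilon > 0$, choose $\varphi \in C_c^{\infty}(\Omega)$ with $\|u - \varphi\|_{X_0^{s,p}(\Omega)} < \varepsilon$; one may assume $\varphi \ge 0$, since $t \mapsto t_+$ is a $1$-Lipschitz contraction and hence $[(\varphi)_+]_{s,p,\mathbb{G}} \le [\varphi]_{s,p,\mathbb{G}}$, and since $u = u_+$ a standard argument using weak convergence, the resulting norm bound, and the uniform convexity of the reflexive space $X_0^{s,p}(\Omega)$ gives strong convergence $(\varphi_k)_+ \to u$. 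For all $n$ large enough that $\operatorname{supp}(\varphi) \Subset \Omega_n$ one has $\eta_n \varphi \equiv \varphi$, so
\[
u - u_n = (u - \varphi) - \eta_n (u - \varphi) = (1 - \eta_n)(u - \varphi).
\]

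\textbf{The main obstacle} is the uniform-in-$n$ bound $[(1-\eta_n)(u-\varphi)]_{s,p,\mathbb{G}} \lesssim \|u - \varphi\|_{X_0^{s,p}(\Omega)}$. The naive splitting
\[
(1-\eta_n)(u-\varphi)(x) - (1-\eta_n)(u-\varphi)(y) = ((u-\varphi)(x)-(u-\varphi)(y))(1-\eta_n(x)) + (u-\varphi)(y)(\eta_n(y)-\eta_n(x))
\]
loses crucial cancellation and produces a cross term whose control requires the fractional seminorm of $\eta_n$, which blows up as $n \to \infty$. To bypass this, I would keep the difference $(1-\eta_n)(u-\varphi)(x) - (1-\eta_n)(u-\varphi)(y)$ undivided, use the pointwise bound $0 \le 1 - \eta_n \le 1$, and invoke a fractional Hardy-type inequality on the stratified Lie group controlling $\int_\Omega |w(x)|^p \operatorname{dist}_{cc}(x,\partial\Omega)^{-ps}\,dx$ by $\|w\|_{X_0^{s,p}(\Omega)}^p$; this allows absorbing the boundary-layer singularity generated by $\eta_n$ into the $X_0^{s,p}$-norm of the small remainder $u - \varphi$. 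Combining the two pieces gives $\limsup_n [u - u_n]_{s,p,\mathbb{G}} \lesssim \varepsilon$, and letting $\varepsilon \to 0$ completes the proof.
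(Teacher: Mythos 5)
Your construction ($u_n=\eta_n u$ with an increasing family of compactly supported cutoffs $\eta_n$) is a genuinely different route from the paper's. The paper instead starts from a nonnegative sequence $v_n\in C_c^\infty(\Omega)$ converging strongly to $u$, truncates to $w_n=\min\{v_n,u\}$ (automatically $\le u$, compactly supported, and still strongly convergent since $\min\{a,b\}=\tfrac12(a+b-|a-b|)$ and $f\mapsto|f|$ is nonexpansive for the Gagliardo seminorm), and then produces the increasing sequence iteratively as $u_k=\max\{u_{k-1},w_{n_k}\}$, choosing $n_k$ so that $\|u_k-u\|<\epsilon/k$. That route requires no estimate through the boundary layer at all; your route gives monotonicity for free but has to pay for it with a quantitative cutoff estimate.

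The gap in your argument is precisely at the step you flag as the main obstacle. To obtain the uniform-in-$n$ bound $[(1-\eta_n)(u-\varphi)]_{s,p,\mathbb{G}}\lesssim\|u-\varphi\|_{X_0^{s,p}(\Omega)}$ you invoke a fractional Hardy inequality $\int_\Omega|w(x)|^p\operatorname{dist}_{cc}(x,\partial\Omega)^{-ps}\,dx\lesssim\|w\|_{X_0^{s,p}(\Omega)}^p$. This inequality is not established anywhere in the paper, and it is not available for an arbitrary bounded open set: already in $\mathbb{R}^N$ the fractional Hardy inequality fails in the borderline case $sp=1$, and for $sp\ne 1$ it requires a geometric thickness or measure-density condition on the complement (plumpness, uniform fatness, or similar); the situation is at least as delicate for the Carnot--Carath\'eodory metric on a stratified group. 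Since the lemma is stated for an arbitrary bounded domain $\Omega\subset\mathbb{G}$, you cannot invoke Hardy as a black box. Even granting it, the intermediate pointwise estimate $\int_\Omega|\eta_n(x)-\eta_n(y)|^p\,|y^{-1}x|^{-Q-ps}\,dx\lesssim\operatorname{dist}_{cc}(y,\partial\Omega)^{-ps}$, uniformly in $n$, needs a careful choice of cutoffs (with $|\nabla_{\mathbb{G}}\eta_n|\lesssim n$ supported on a layer of width $\sim 1/n$) and a computation you only gesture at. As written the argument does not close; the paper's min/max truncation scheme sidesteps the boundary layer entirely, and I would recommend that route.
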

\begin{proof}
	Take $u\in X_0^{s,p}(\Omega)$ and $u\geq0$. By invoking the  density of $C_c^{\infty}(\Omega)$ in $X_0^{s,p}(\Omega)$, we can choose a sequence $\{v_n\}\subset C_c^{\infty}(\Omega)$ converging strongly to $u$ in $X_0^{s,p}(\Omega)$ such that $v_n\geq0$ for all $n\in\mathbb{N}$. We now construct another sequence $\{w_n\}$ by $w_n=\min\{v_n, u\}$. Then $w_n\rightarrow u$ strongly in $X_0^{s,p}(\Omega)$. Let $\epsilon>0$. Choose $n_1>0$ such that $\|w_{n_1}-u\|<\epsilon$, then $\|\max\{u_1, w_n\}-u\|\rightarrow0$, where $u_1:=w_{n_1}$. Again choose, $n_2$ such that $\|\max\{u_1, w_{n_2}\}-u\|<\frac{\epsilon}{2}$, then for $u_2:=\max\{u_1,w_{n_2}\}$ we have $\|\max\{u_2, w_n\}-u\|\rightarrow0$. Continuing in this way, set $u_k=\max\{u_{k-1},w_{n_k}\}$. Note that each $u_k$ is compactly supported and $\|u_k-u\|\leq\frac{\epsilon}{k}$. Thus we can deduce that $\|u_n-u\|\rightarrow0$ and this is the desired sequence.
\end{proof}

For each $u \in X_0^{s,p}(\Omega)$, the fiber map $\phi_{u}: (0,\infty) \rightarrow \mathbb{R}$ is defined by $\phi_{u}(t)=I_{\lambda}(t u)$. This fibering map is an important tool to extract the critical points of the energy functional $I_{\lambda}$ which was first coined by Dr\'abek and Pohozaev \cite{DP97}. Clearly, for $t>0$, we have 
\begin{align} \label{fiber-1}
	&\phi_{u}(t)= \frac{t^{p}}{p}\|u\|^{p}-\lambda \frac{ t^{1-\delta}}{1-\delta} \int_{\Omega}f(x)|u|^{1-\delta} dx-\frac{t^{q+1}}{q+1} \int_{\Omega}g(x)|u|^{q+1} dx, 
	\end{align}
	\begin{align} \label{fiber-2}
	&\phi_{u}^{\prime}(t)=t^{p-1}\|u\|^{p}-\lambda t^{-\delta} \int_{\Omega}f(x)|u|^{1-\delta} d x-t^{q} \int_{\Omega}g(x)|u|^{q+1} d x, 
	\end{align}
	and 
	\begin{align}\label{fiber-3}
	&\phi_{u}^{\prime \prime}(t)=(p-1) t^{p-2}\|u\|^{p}+\delta \lambda t^{-\delta-1} \int_{\Omega}f(x)|u|^{1-\delta} d x-q t^{q-1} \int_{\Omega}g(x)|u|^{q+1} d x.
\end{align}

Observe that $\phi_{u}^{\prime}(t)=\frac{1}{t}\langle I_{\lambda}'(tu), tu\rangle$. Thus  $\phi_{u}^{\prime}(t)=0$ if and only if $tu\in \mathcal{N}_{\lambda}$ for some $t>0$ and $u$ is a critical point of $I_{\lambda}$ if and only if $\phi_{u}^{\prime}(1)=0$. Thus it is natural to split $\mathcal{N}_{\lambda}$ into three essential subsets corresponding to local minima, local maxima and points of inflexion. 

For this purpose, we define the following three sets
\begin{align} \label{n-plus}
	\mathcal{N}_{\lambda}^{+}&=\left\{u \in \mathcal{N}_{\lambda}: \phi_{u}^{\prime}(1)=0, \phi_{u}^{\prime \prime}(1)>0\right\} \nonumber \\&=\left\{t_{0} u \in \mathcal{N}_{\lambda}: t_{0}>0, \phi_{u}^{\prime}\left(t_{0}\right)=0, \phi_{u}^{\prime \prime}\left(t_{0}\right)>0\right\},  
	\end{align}
	\begin{align} \label{n-minus}
	\mathcal{N}_{\lambda}^{-}&=\left\{u \in \mathcal{N}_{\lambda}: \phi_{u}^{\prime}(1)=0, \phi_{u}^{\prime \prime}(1)<0\right\}\nonumber\\ &=\left\{t_{0} u \in \mathcal{N}_{\lambda}: t_{0}>0, \phi_{u}^{\prime}\left(t_{0}\right)=0, \phi_{u}^{\prime \prime}\left(t_{0}\right)<0\right\}, 
	\end{align}
	and 
	\begin{align} 
	\mathcal{N}_{\lambda}^{0}&=\left\{u \in \mathcal{N}_{\lambda}: \phi_{u}^{\prime}(1)=0, \phi_{u}^{\prime \prime}(1)=0\right\}.\label{n-0}
\end{align}
Therefore, it is enough to find two members  $u\in \mathcal{N}_{\lambda}^{+}\setminus\mathcal{N}_{\lambda}^{0}$ and $v\in \mathcal{N}_{\lambda}^{-}\setminus\mathcal{N}_{\lambda}^{0}$ to establish our result. It is easy to see that  only members of the sets $\mathcal{N}_{\lambda}^{\pm}\setminus\mathcal{N}_{\lambda}^{0}$ are critical points of the energy functional $I_{\lambda}$.

We first introduce the following quantity 
\begin{align}\label{lambda-1}
   \Lambda_{1}=  \sup_{u \in X_0^{s,p}(\Omega)} \Big\{\lambda>0: \phi_{u}(t) ~&\text {(ref. \eqref{fiber-1}) has two critical points in }~(0, \infty) \nonumber\Big\}.
\end{align}

\begin{proposition}\label{lambda-finite}
	Under the assumptions on the problem \eqref{problem}, we have $0<\Lambda_1<\infty$.
\end{proposition}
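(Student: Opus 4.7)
The plan is to analyze the fiber map $\phi_u$ directly, extract an explicit critical threshold $\lambda^*(u)>0$ that marks the transition between two critical points and none, and then bound $\lambda^*(u)$ above and below uniformly in $u$ using scale invariance together with the Sobolev embedding from Theorem~\ref{l-3}.

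First, I would rewrite the critical point equation $\phi_u'(t)=0$ from \eqref{fiber-2}. Multiplying through by $t^\delta$ recasts it as $\psi_u(t)=\lambda A(u)$, where
\[
\psi_u(t):=t^{p-1+\delta}\|u\|^{p}-t^{q+\delta}B(u),\quad A(u):=\int_\Omega f|u|^{1-\delta}\,dx,\quad B(u):=\int_\Omega g|u|^{q+1}\,dx.
\]
Since $p-1+\delta<q+\delta$ and $B(u)>0$, the function $\psi_u$ vanishes at $0$, rises to a unique maximum, then decreases to $-\infty$. A direct differentiation yields
\[
t_*(u)=\left(\frac{(p-1+\delta)\|u\|^p}{(q+\delta)B(u)}\right)^{1/(q-p+1)},\qquad \psi_u(t_*)=\frac{q-p+1}{q+\delta}\,\|u\|^p\, t_*^{p-1+\delta}.
\]
Consequently, $\phi_u$ has exactly two critical points on $(0,\infty)$ if and only if $\lambda<\lambda^*(u):=\psi_u(t_*)/A(u)$.

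Next, a short exponent count verifies $\lambda^*(\tau u)=\lambda^*(u)$ for every $\tau>0$ (the scalings $\|\tau u\|^p=\tau^p\|u\|^p$, $A(\tau u)=\tau^{1-\delta}A(u)$, $B(\tau u)=\tau^{q+1}B(u)$ cancel exactly in the expression for $\lambda^*$), so it suffices to analyze $\lambda^*$ on the unit sphere $\{\|u\|=1\}$. On this sphere, Theorem~\ref{l-3} provides $\|u\|_{L^r(\Omega)}\leq C_r$ for every $r\in[1,p_s^*]$, with $C_r$ depending only on $Q,s,p,r,|\Omega|$. Together with $f,g\in L^\infty(\Omega)$, this yields
\[
A(u)\leq \|f\|_{L^\infty}\,C_{1-\delta}^{\,1-\delta},\qquad B(u)\leq \|g\|_{L^\infty}\,C_{q+1}^{\,q+1}.
\]
Since $\lambda^*(u) = C\, A(u)^{-1}\, B(u)^{-(p-1+\delta)/(q-p+1)}$ for $\|u\|=1$, a product of strictly negative powers of $A(u)$ and $B(u)$, these upper bounds translate directly into a strictly positive uniform lower bound for $\lambda^*(u)$, and therefore $\Lambda_1>0$.

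For the finiteness $\Lambda_1<\infty$, I would fix any non-trivial $u_0\in C_c^\infty(\Omega)$. Then $\|u_0\|,A(u_0),B(u_0)$ are all strictly positive and finite, so $\lambda^*(u_0)<\infty$. For any $\lambda>\lambda^*(u_0)$ the fiber map $\phi_{u_0}$ has no critical point in $(0,\infty)$, which violates the defining condition for $\Lambda_1$; hence $\Lambda_1\leq \lambda^*(u_0)<\infty$.

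The main technical obstacle is the uniform positivity of $\lambda^*(u)$: scale invariance reduces everything to the unit sphere in $X_0^{s,p}(\Omega)$, but careful exponent bookkeeping is required to ensure that the Sobolev-type upper bounds on $A(u)$ and $B(u)$ indeed yield a strictly positive lower bound on $\lambda^*(u)$ via its explicit expression. The upper bound half of the proposition, by contrast, is essentially a one-line consequence of the fact that $\lambda^*$ is finite on any single admissible $u_0$.
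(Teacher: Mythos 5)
Your explicit formula $\lambda^*(u)=\psi_u(t_*)/A(u)$ is exactly the quantity $m_u(t_{\max})\big/\!\int_\Omega f|u|^{1-\delta}dx$ computed in Lemma \ref{n-nonempty} (incidentally, your prefactor $q+1-p$ is the correct one; \eqref{m-max} has a typo $q+2-p$), and the observation that $\lambda^*(\tau u)=\lambda^*(u)$ is a clean way to reduce to the unit sphere. For the lower bound $\Lambda_1>0$, your Sobolev-embedding argument coincides with the paper's own route: the uniform lower bound for $\lambda^*$ on the sphere is precisely the constant $\lambda_*$ in \eqref{lambda-max}, and the paper records $\Lambda_1\geq\lambda_*>0$.

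The upper bound $\Lambda_1<\infty$ is where you depart from the paper, and there is a gap. You fix one $u_0$, note $\lambda^*(u_0)<\infty$, and conclude $\Lambda_1\leq\lambda^*(u_0)$ on the grounds that for $\lambda>\lambda^*(u_0)$ the fiber map $\phi_{u_0}$ has no critical point. That conclusion only follows if one reads \eqref{lambda-1} as the supremum of those $\lambda$ for which \emph{every} $u$ yields two fiber critical points, i.e.\ $\Lambda_1=\inf_u\lambda^*(u)$. As written, however, the definition places $\sup_{u}$ outside the set, which reads as $\Lambda_1=\sup_u\lambda^*(u)$; under that reading a single $u_0$ gives a lower bound, not an upper one. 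More seriously, under that literal reading the supremum is actually $+\infty$: on the unit sphere $\lambda^*(u)$ is a positive constant times $A(u)^{-1}B(u)^{-(p-1+\delta)/(q+1-p)}$, and for concentrating dilations $u_n(x)=n^{(Q-sp)/p}u(D_n x)$ normalized in $X_0^{s,p}(\Omega)$ one has $\int_\Omega|u_n|^{1-\delta}dx\to 0$ and $\int_\Omega|u_n|^{q+1}dx\to 0$ (both exponents are below $p_s^*$), so $\lambda^*(u_n)\to\infty$. The paper's own proof of finiteness does not reduce to a fiber-map calculation: it assumes $\Lambda_1=+\infty$, invokes Lemma \ref{soln1} to obtain a Nehari solution $u_{\bar\lambda}$ for a large $\bar\lambda$, shows it is a weak supersolution of the shifted eigenvalue problem \eqref{lambda-fin-2}, and runs a sub/supersolution monotone iteration (via Lemma \ref{ev-weakcomp}) to produce a positive eigenfunction for $\lambda_1+\epsilon$, contradicting the simplicity and isolation in Theorem \ref{ev-mainthm}. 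Whichever reading of \eqref{lambda-1} is intended, your one-line upper bound is not a substitute for that argument: you should either state explicitly that you are taking $\Lambda_1=\inf_u\lambda^*(u)$ (in which case the paper's contradiction machinery is unnecessary, and that discrepancy is worth noting), or you need the eigenvalue-based route.
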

To prove Proposition \ref{lambda-finite} we first prove the following result which ensure that $\Lambda_1>0.$ We first define the function $m_{u}: \mathbb{R}^{+} \rightarrow \mathbb{R}$ by
	
	\begin{equation}\label{fn-m}
		m_{u}(t)=t^{p-1+\delta} \|u\|_{X_0^{s,p}(\Omega)}^p-t^{q+\delta} \int_{\Omega} g(x)|u|^{q+1} dx.
	\end{equation}
	The function $m_u$ will play a crucial role to find a $\lambda_*>0$ in the following lemma.
\begin{lemma}\label{n-nonempty}
	Under the assumptions on the problem \eqref{problem}, there exists $\lambda_*>0$ such that, for every $0<\lambda<\lambda_*,$ we have $\mathcal{N}_{\lambda}^{\pm}\neq\emptyset$, i.e., there exist unique $t_{1}$ and $t_{2}$ in $(0, \infty)$ with $t_{1}<t_{2}$ such that $t_{1} u \in \mathcal{N}_{\lambda}^{+}$and $t_{2} u \in \mathcal{N}_{\lambda}^{-}$.
	Moreover, for any $\lambda \in\left(0, \Lambda_{1}\right)$, we have $\mathcal{N}_{\lambda}^{0}=\emptyset$.
	
	Furthermore, $\sup\limits_{u \in \mathcal{N}_{\lambda}^{+}}\|u\|_{X_0^{s,p}(\Omega)}<\infty$ and $\inf\limits_{v \in \mathcal{N}_{\lambda}^{-}}\|v\|_{X_0^{s,p}(\Omega)}>0$.
\end{lemma}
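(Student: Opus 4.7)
The plan is to analyze the fiber map $\phi_u$ through the auxiliary function $m_u$ defined in \eqref{fn-m}. A direct computation from \eqref{fiber-2} and \eqref{fn-m} yields the algebraic identity
\[
t^{\delta}\,\phi_u'(t) = m_u(t) - \lambda\int_{\Omega} f(x)|u|^{1-\delta}\,dx,
\]
so that $tu\in\mathcal{N}_{\lambda}$ is equivalent to $m_u(t)=\lambda\int_{\Omega} f(x)|u|^{1-\delta}\,dx$. Differentiating once more, I obtain
\[
\phi_u''(t) = t^{-\delta} m_u'(t) - \delta\, t^{-1-\delta}\left(m_u(t)-\lambda\int_{\Omega} f(x)|u|^{1-\delta}\,dx\right),
\]
so that at any critical point $t$ of $\phi_u$ one has simply $\phi_u''(t)=t^{-\delta}m_u'(t)$, and the problem reduces to the analysis of the real-valued function $m_u$. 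Since $q+\delta>p-1+\delta>0$, the function $m_u$ vanishes at $t=0$, tends to $-\infty$ as $t\to\infty$, and $m_u'(t)=0$ has the unique positive root
\[
t_{\max}=t_{\max}(u) = \left(\frac{(p-1+\delta)\,\|u\|_{X_0^{s,p}(\Omega)}^{p}}{(q+\delta)\int_{\Omega} g(x)|u|^{q+1}\,dx}\right)^{1/(q-p+1)},
\]
at which $m_u$ attains its maximum
\[
M(u) := m_u(t_{\max}(u))=\frac{q-p+1}{q+\delta}\,t_{\max}(u)^{\,p-1+\delta}\,\|u\|_{X_0^{s,p}(\Omega)}^{p}>0.
\]
A short scaling check shows that both $M(u)$ and $\int_{\Omega} f(x)|u|^{1-\delta}\,dx$ are $(1-\delta)$-homogeneous in $u$, so the ratio $M(u)/\int_{\Omega}f(x)|u|^{1-\delta}\,dx$ is scale invariant.

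The next step is to select $\lambda_*$. By Theorem \ref{l-3}, the Sobolev embedding yields $\int_{\Omega}g(x)|u|^{q+1}\,dx\leq C_1\|g\|_\infty\|u\|_{X_0^{s,p}(\Omega)}^{q+1}$ and $\int_{\Omega}f(x)|u|^{1-\delta}\,dx\leq C_2\|f\|_\infty\|u\|_{X_0^{s,p}(\Omega)}^{1-\delta}$; normalizing $\|u\|_{X_0^{s,p}(\Omega)}=1$, these estimates deliver a positive lower bound on the scale-invariant ratio, and I set
\[
\lambda_{*}:=\inf_{u\neq 0}\frac{M(u)}{\int_{\Omega}f(x)|u|^{1-\delta}\,dx}>0.
\]
For any $0<\lambda<\lambda_*$ and any $u\neq 0$, the horizontal level $y=\lambda\int_{\Omega}f(x)|u|^{1-\delta}\,dx$ lies strictly below $M(u)$, so the equation $m_u(t)=\lambda\int_{\Omega}f(x)|u|^{1-\delta}\,dx$ has exactly two positive solutions $t_1(u)<t_{\max}<t_2(u)$; the strict monotonicity of $m_u$ on either side of $t_{\max}$ gives uniqueness and the signs $m_u'(t_1)>0>m_u'(t_2)$, whence the identity $\phi_u''(t_i)=t_i^{-\delta}m_u'(t_i)$ forces $\phi_u''(t_1)>0$ and $\phi_u''(t_2)<0$, i.e., $t_1u\in\mathcal{N}_\lambda^+$ and $t_2u\in\mathcal{N}_\lambda^-$. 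The claim $\mathcal{N}_\lambda^0=\emptyset$ for $\lambda\in(0,\Lambda_1)$ then follows directly from the definition \eqref{lambda-1}, since an element of $\mathcal{N}_\lambda^0$ would correspond to the tangency $\lambda\int f|u|^{1-\delta}=M(u)$, destroying the two-critical-points structure that defines $\Lambda_1$.

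For the final quantitative bounds, let $u\in\mathcal{N}_\lambda^+$. Substituting the Nehari identity $\int_{\Omega}g(x)|u|^{q+1}\,dx=\|u\|_{X_0^{s,p}(\Omega)}^p-\lambda\int_{\Omega}f(x)|u|^{1-\delta}\,dx$ into $\phi_u''(1)>0$ (see \eqref{fiber-3} with $t=1$) eliminates the term $\int g|u|^{q+1}$ and yields
\[
(q+1-p)\,\|u\|_{X_0^{s,p}(\Omega)}^{p}<(q+\delta)\lambda\int_{\Omega}f(x)|u|^{1-\delta}\,dx\leq(q+\delta)\lambda C_2\|f\|_\infty\|u\|_{X_0^{s,p}(\Omega)}^{1-\delta},
\]
hence $\|u\|_{X_0^{s,p}(\Omega)}\leq\bigl((q+\delta)\lambda C_2\|f\|_\infty/(q+1-p)\bigr)^{1/(p-1+\delta)}$, which proves $\sup_{\mathcal{N}_\lambda^+}\|u\|_{X_0^{s,p}(\Omega)}<\infty$. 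Dually, for $v\in\mathcal{N}_\lambda^-$, eliminating $\lambda\int_{\Omega}f(x)|v|^{1-\delta}\,dx$ in $\phi_v''(1)<0$ gives
\[
(p-1+\delta)\,\|v\|_{X_0^{s,p}(\Omega)}^{p}<(q+\delta)\int_{\Omega}g(x)|v|^{q+1}\,dx\leq(q+\delta)C_1\|g\|_\infty\|v\|_{X_0^{s,p}(\Omega)}^{q+1},
\]
producing the uniform positive lower bound $\|v\|_{X_0^{s,p}(\Omega)}\geq\bigl((p-1+\delta)/((q+\delta)C_1\|g\|_\infty)\bigr)^{1/(q+1-p)}$. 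The main technical care is needed in verifying the clean identity $\phi_u''(t_i)=t_i^{-\delta}m_u'(t_i)$ at critical points, and in showing that the constructed threshold $\lambda_*$ is compatible with the threshold $\Lambda_1$ of \eqref{lambda-1}, so that all three assertions of the lemma become simultaneously valid on the stated interval.
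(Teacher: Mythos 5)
Your proof is correct and takes essentially the same route as the paper: both analyze the fiber map through the auxiliary function $m_u$ of \eqref{fn-m}, locate $t_{\max}$, extract the threshold $\lambda_*$ from the gap between $m_u(t_{\max})$ and $\lambda\int_\Omega f|u|^{1-\delta}\,dx$, and derive the norm bounds by substituting the Nehari identity into $\phi_u''(1)\gtrless 0$. Your streamlined identity $\phi_u''(t)=t^{-\delta}m_u'(t)$ at critical points is a clean repackaging of the paper's \eqref{rel-m-phi}--\eqref{m-max}, and your inequality for $v\in\mathcal{N}_\lambda^-$ is stated in the correct direction (the paper's intermediate line $(p-1+\delta)\|v\|^p\geq(q+\delta)c_2\|g\|_\infty\|v\|^{q+1}$ has the inequality sign flipped, though its conclusion is as you derive).
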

\begin{proof}
	
	Using \eqref{fiber-2} and \eqref{fn-m} we first deduce that, for $t>0$, we have
	\begin{equation}\label{rel-m-phi}
		\phi_{u}^{\prime}(t)=t^{-\delta}\left(m_{u}(t)-\lambda \int_{\Omega} f(x)|u|^{1-\delta} d x\right).
	\end{equation}
	
	This implies that $\phi_{u}^{\prime}(t)=0$ if and only if  $m_{u}(t)-\lambda \int_{\Omega} f(x)|u|^{1-\delta} dx=0$. Referring to \eqref{fn-m} and $q>p-1$, we note that for $u\ne0$,  $m_{u}(0)=0$ and $\lim _{t \rightarrow \infty} m_{u}(t)=-\infty$. Thus,  one can verify that the function $m_{u}(t)$ attains its maximum at $t=t_{\max}$ given by 
	
	\begin{equation}\label{t-max}
		t_{\max }=\left[\frac{(p-1+\delta)\|u\|_{X_0^{s,p}(\Omega)}^{p}}{(q+\delta) \int_{\Omega}g(x)|u|^{q+1}dx}\right]^{\frac{1}{q+1-p}}.
	\end{equation}
	
	The value of $m_u$ at $t=t_{\max}$ is given by 
		\begin{equation}\label{m-max}
		m_{u}\left(t_{\max }\right)=\left(\frac{q+2-p}{p-1+\delta}\right)\left(\frac{p-1+\delta}{q+\delta}\right)^{\frac{\delta+q}{q+1-p}}\frac{\|u\|_{X_0^{s,p}(\Omega)}^{\frac{p(q+\delta)}{q+1-p}}}{\left(\int_{\Omega}g(x)|u|^{q+1}dx\right)^{\frac{p-1+\delta}{q+1-p}}}.
	\end{equation}
	
	In addition,  by using the fact that $\lim_{t\rightarrow0^+}m_u'(t)>0$, we  conclude that $m_u$ is increasing function on $(0, t_{\max })$ and is  decreasing function on $(t_{\max}, \infty)$. Indeed,  we have
	
	\begin{align}\label{lambda-choice}
		\frac{m_{u}(t_{\max})}{\int_{\Omega}f(x)|u|^{1-\delta}dx} &=\left(\frac{q+2-p}{p-1+\delta}\right)\left(\frac{p-1+\delta}{q+\delta}\right)^{\frac{\delta+q}{\delta+1-p}}\frac{\|u\|_{X_0^{s,p}(\Omega)}^{\frac{p(q+\delta)}{\delta+1-p}}}{\left(\int_{\Omega}g|u|^{q+1}dx\right)^{\frac{p-1+\delta}{q+1-p}}\left(\int_{\Omega}f|u|^{1-\delta}dx\right)}\nonumber\\
		&\geq \left(\frac{q+2-p}{p-1+\delta}\right)\left(\frac{p-1+\delta}{q+\delta}\right)^{\frac{\delta+q}{\delta+1-p}} \frac{S_{q+1}^{-\frac{p-1+\delta}{q+1-p}}{S_{1-\delta}^{-1}}}{\|f\|_{\infty}\|g\|_{\infty}^{\frac{p-1+\delta}{q+1-p}}}, 
	\end{align}
	where $S_{\alpha}=\sup\{\|u\|_{\alpha}^{\alpha}: u\in X_0^{s,p}(\Omega), \|u\|_{X_0^{s,p}(\Omega)}=1\}$ for $\alpha\geq0$, i.e. $\int_{\Omega}|u|^{\alpha}dx\leq S_{\alpha}\|u\|_{X_0^{s,p}(\Omega)}^{\alpha}.$ 
	
	Now we set 
	\begin{equation}\label{lambda-max}
		\lambda_*= \left(\frac{q+2-p}{p-1+\delta}\right)\left(\frac{p-1+\delta}{q+\delta}\right)^{\frac{\delta+q}{\delta+1-p}}\frac{S_{q+1}^{-\frac{p-1+\delta}{q+1-p}}{S_{1-\delta}^{-1}}}{\|f\|_{\infty}\|g\|_{\infty}^{\frac{p-1+\delta}{q+1-p}}}.
	\end{equation}
	Then,  for every $\lambda\in (0,\lambda_*),$ we have
	\begin{equation}
		0<\lambda \int_{\Omega} f(x)|u|^{1-\delta} d x\leq m_{u}\left(t_{\max }\right).
	\end{equation}
	Thus,  there exist $t_1$ and $t_2$ with $0<t_{1}<t_{\max }<t_{2}$ such that 
	\begin{equation}
		m_{u}(t_1)=m_{u}(t_2)=\lambda \int_{\Omega} f(x)|u|^{1-\delta} d x.
	\end{equation}
	Therefore, we deduce that $\phi_u$ decreasing on the set $(0,t_1)$,  increasing on $(t_1, t_2)$ and again  decreasing on $(t_2, \infty)$. So, $\phi_u$ has a local maxima at $t=t_2$ and a local minima at $t=t_1$ such that $t_{2} u \in \mathcal{N}_{\lambda}^{-}$ and $t_{1} u \in \mathcal{N}_{\lambda}^{+}$. In particular, we have 
	\begin{equation}
		I_{\lambda}(t_1)(u)=\min_{0\leq t\leq t_{\max}}I_{\lambda}(u)~~~\text{and}~~~I_{\lambda}(t_2)(u)=\max_{t\geq0}I_{\lambda}(u).
	\end{equation}
	
	We now intend to prove that $\mathcal{N}_{\lambda}^{0}=\emptyset.$ For a moment, we suppose that $u \not \equiv 0$ and $u \in \mathcal{N}_{\lambda}^{0}$, then $u \in \mathcal{N}_{\lambda}$. Therefore, by using the definition of the fibering map  $\phi_{u}(t)$, we see that $t=1$ is a critical point. Now, the above arguments imply that the critical points of $\phi_{u}$ are corresponding to a local minima or a local maxima. Thus, we get either $u \in \mathcal{N}_{\lambda}^{+}$ or $u \in \mathcal{N}_{\lambda}^{-}$. This contradicts the fact that $u \in \mathcal{N}_{\lambda}^{0}$ and therefore we conclude that $\mathcal{N}_{\lambda}^{0}=\emptyset.$
	
	Finally, we assume that $u \in \mathcal{N}_{\lambda}^{+}$. From \eqref{fiber-3} and $\phi_{u}^{''}(1)>0$ we get
	$$(q+1-p)\|u\|_{X_0^{s,p}(\Omega)}^{p}\leq \lambda(q+\delta) c_1\|f\|_{\infty}\|u\|_{X_0^{s,p}(\Omega)}^{1-\delta},$$ which implies that
	\begin{align}\label{l-est1}
		\|u\|_{X_0^{s,p}(\Omega)} &\leq\left(\frac{\lambda(q+\delta) c_1\|f\|_{\infty}}{q+1-p}\right)^{\frac{1}{p-1+\delta}}.
	\end{align}
	
	Similarly, for $v \in \mathcal{N}_{\lambda}^{-}$, from \eqref{fiber-3} and the fact $\phi_{v}^{''}(1)<0$ we obtain
	$$(p-1+\delta)\|v\|_{X_0^{s,p}(\Omega)}^{p}\geq (q+\delta) c_2\|g\|_{\infty}\|v\|_{X_0^{s,p}(\Omega)}^{q+1}$$ which eventually gives 
	\begin{align}\label{l-est2}
		\|v\|_{X_0^{s,p}(\Omega)} &\geq\left(\frac{p-1+\delta}{(q+\delta) c_1\|g\|_{\infty}}\right)^{\frac{1}{q+1-p}}.
	\end{align}
	From \eqref{l-est1} and \eqref{l-est2}, we conclude that $\sup\limits_{u \in \mathcal{N}_{\lambda}^{+}}\|u\|_{X_0^{s,p}(\Omega)}<\infty$ and $\inf\limits_{v \in \mathcal{N}_{\lambda}^{-}}\|v\|_{X_0^{s,p}(\Omega)}>0$. 
\end{proof}

\begin{lemma}\label{l-crit}
	Let u be a local minimizer for $I_{\lambda}$ on $\mathcal{N}_{\lambda}^{-}$ or $\mathcal{N}_{\lambda}^{+}$ such that $u \notin \mathcal{N}_{\lambda}^{0}$. Then $u$ is a critical point of $I_{\lambda}$.
\end{lemma}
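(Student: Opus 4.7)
My approach is a Lagrange-multiplier argument on the Nehari constraint, combined with a careful treatment of the singular nonlinearity. First I would view $\mathcal{N}_\lambda$ as the zero level set (minus the origin) of the constraint map
\begin{equation*}
\Phi(u) := \langle I_\lambda'(u), u\rangle = \|u\|_{X_0^{s,p}(\Omega)}^p - \lambda \int_\Omega f|u|^{1-\delta}\, dx - \int_\Omega g|u|^{q+1}\, dx.
\end{equation*}
A direct computation of $\langle \Phi'(u), u\rangle$ combined with the Nehari identity $\Phi(u)=0$ gives
\begin{equation*}
\langle \Phi'(u), u\rangle = p\|u\|^p - (1-\delta)\lambda\!\int_\Omega f|u|^{1-\delta} - (q+1)\!\int_\Omega g|u|^{q+1} = \phi_u''(1),
\end{equation*}
after one substitution of the Nehari identity. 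Since $u\notin\mathcal{N}_\lambda^0$, this quantity is nonzero; in particular $\Phi'(u)$ is not the zero functional and $\mathcal{N}_\lambda$ is a $C^1$-submanifold of $X_0^{s,p}(\Omega)$ in a neighbourhood of $u$.

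Next, for an arbitrary test function $\psi\in C_c^\infty(\Omega)$, I would project the perturbation $u+\epsilon\psi$ back to $\mathcal{N}_\lambda$ via the fibering map. Applying the implicit function theorem to $F(\epsilon,t):=\phi'_{u+\epsilon\psi}(t)$ at $(0,1)$, where $F(0,1)=0$ and $\partial_t F(0,1)=\phi_u''(1)\neq 0$, yields a $C^1$-curve $\epsilon\mapsto t(\epsilon)$ with $t(0)=1$ such that $t(\epsilon)(u+\epsilon\psi)\in\mathcal{N}_\lambda$. Continuity of the sign of $\phi_u''$ forces the curve to remain in the same component $\mathcal{N}_\lambda^{\pm}$ as $u$, so local minimality gives $I_\lambda\!\bigl(t(\epsilon)(u+\epsilon\psi)\bigr)\geq I_\lambda(u)$ for all $|\epsilon|$ small. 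Differentiating at $\epsilon=0$ and invoking the Nehari identity $\langle I_\lambda'(u),u\rangle=0$,
\begin{equation*}
0 = t'(0)\,\langle I_\lambda'(u),u\rangle + \langle I_\lambda'(u),\psi\rangle = \langle I_\lambda'(u),\psi\rangle,
\end{equation*}
so $u$ satisfies the Euler--Lagrange equation weakly against every $\psi\in C_c^\infty(\Omega)$, which is exactly Definition \ref{d-weak}.

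The main technical obstacle is justifying the directional differentiability of the singular term $\lambda\int_\Omega f|u|^{1-\delta}$, since the formal derivative involves $u^{-\delta}$. The plan is as follows. Since $I_\lambda(|u|)\leq I_\lambda(u)$ (the triangle inequality $\bigl||u(x)|-|u(y)|\bigr|\leq |u(x)-u(y)|$ together with non-negativity of $f,g$), we may replace $u$ by $|u|$ and assume $u\geq 0$. On $\{u>0\}$ the right-hand side of \eqref{problem} is non-negative, so $u$ is a weak supersolution of $(-\Delta_{p,\mathbb{G}})^s u=0$ in $\Omega$; the strong minimum principle (Theorem \ref{min p}) then gives $u>0$ a.e.\ in $\Omega$, with $\mathrm{essinf}_K u\geq c_K>0$ on each compact $K\Subset\Omega$. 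For $\psi\in C_c^\infty(\Omega)$ with $\mathrm{supp}\,\psi\subset K$ and $|\epsilon|$ small, the difference quotient $\epsilon^{-1}\bigl(f|u+\epsilon\psi|^{1-\delta}-f\,u^{1-\delta}\bigr)$ admits a uniform $L^1(K)$ majorant in $\epsilon$ (using $u\geq c_K$), so the dominated convergence theorem yields
\begin{equation*}
\frac{d}{d\epsilon}\bigg|_{\epsilon=0}\int_\Omega f|u+\epsilon\psi|^{1-\delta}\, dx = (1-\delta)\int_\Omega f\,u^{-\delta}\psi\, dx,
\end{equation*}
which is the only non-routine piece. Differentiation of the Gagliardo seminorm and of the subcritical term $\int g|u|^{q+1}$ is standard and causes no additional difficulty, completing the argument.
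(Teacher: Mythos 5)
Your argument follows the same route as the paper's own proof: treat the Nehari identity $\Phi(u)=\langle I_\lambda'(u),u\rangle=0$ as a constraint, observe that $\langle\Phi'(u),u\rangle=\phi_u''(1)\neq 0$ for $u\notin\mathcal{N}_\lambda^0$, and conclude by a Lagrange multiplier; your implicit-function-theorem construction of the curve $t(\epsilon)$ is the standard way to make that multiplier argument rigorous. Where you go beyond the paper is in flagging the differentiability of the singular term. The paper itself remarks (after \eqref{d-energy}) that $I_\lambda$ is not Fr\'echet differentiable, yet its proof of Lemma \ref{l-crit} invokes Lagrange multipliers without comment, so your attention to directional differentiability along compactly supported directions is well placed.

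However, the way you obtain strict positivity $u>0$ is circular. To apply the strong minimum principle (Theorem \ref{min p}) one must already know that $u$ is a weak supersolution of $(-\Delta_{p,\mathbb{G}})^s u=0$, i.e.\ that $\langle(-\Delta_{p,\mathbb{G}})^s u,\phi\rangle\geq 0$ for all non-negative $\phi$. Your justification --- that ``on $\{u>0\}$ the right-hand side of \eqref{problem} is non-negative, so $u$ is a weak supersolution'' --- presumes $u$ already satisfies \eqref{problem}, which is precisely the conclusion of the lemma, not a hypothesis. The standard way to break this circle (cf.\ \cite{HSS08,MS16}) is to first derive the one-sided inequality $\langle I_\lambda'(u),\phi\rangle\geq 0$ for $\phi\geq 0$ directly from minimality on $\mathcal{N}_\lambda^\pm$, using the concavity of $t\mapsto(u+t\phi)^{1-\delta}$ together with the monotone convergence theorem to give meaning to the possibly infinite difference quotient; this furnishes the supersolution property, \emph{then} the minimum principle, and only then can your implicit-function-theorem step be run as you wrote it. A secondary, smaller gap: the replacement $u\mapsto|u|$ is not free, since $[\,|u|\,]_{s,p}\leq[u]_{s,p}$ is only an inequality, so $|u|$ need not itself lie on $\mathcal{N}_\lambda$; in practice this is harmless here only because the minimizers produced in Lemmas \ref{soln1} and \ref{soln2} are already taken non-negative.
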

\begin{proof}
We first introduce the functional  $J_{\lambda}(u)=\langle I_{\lambda}'(u), u\rangle$. Then,  one can easily verify that $\mathcal{N}_{\lambda}=J_{\lambda}^{-1}(0)\setminus\{0\}$ and 
	
	\begin{align*}
		\left\langle J_{\lambda}'(u), u\right\rangle &=p \|u\|_{X_0^{s,p}(\Omega)}^p-\lambda (1-\delta) \int_{\Omega} f(x)|u|_{X_0^{s,p}(\Omega)}^{1-\delta} d x-q \int_{\Omega} g(x)|u|^{q+1} dx \\
		&=(p-1+\delta)\|u\|^{p}-(q-\delta) \int_{\Omega} h(x)|u|^{q+1} dx,~~\forall~u \in \mathcal{N}_{\lambda}.
	\end{align*}
	Since $u$ is a local minimizer for $I_{\lambda}$ on $\mathcal{N}_{\lambda}$ we can redefine the minimization problem under the following constrained equation
	\begin{equation}
		J_{\lambda}(u)=\langle I_{\lambda}'(u), u\rangle=0
	\end{equation}
	Therefore, the method of Lagrange multipliers guarantees the existence of a constant $\kappa \in \mathbb{R}$ such that
	$$J_{\lambda}'(u)=\kappa I_{\lambda}'(u).$$
	Thus, we obtain
	$$
	\langle I_{\lambda}'(u), u\rangle=\kappa\langle J_{\lambda}'(u), u\rangle=\kappa\phi^{''}(1)=0.
	$$
	Therefore, we conclude that $\kappa=0$ as $u \notin \mathcal{N}_{\lambda}^{0}$. Hence, $u$ is a critical point of $I_{\lambda}$.
\end{proof}

\subsection{Existence of minimizers on $\mathcal{N}_{\lambda}^{+}$ and $\mathcal{N}_{\lambda}^{-}$}

In this subsection, we will prove the existence of minimizers $u_{\lambda}$ and $v_{\lambda}$ of $I_{\lambda}$ on $\mathcal{N}_{\lambda}^{+}$ and $\mathcal{N}_{\lambda}^{-}$ which is attained in $\mathcal{N}_{\lambda}^{+}$ and $\mathcal{N}_{\lambda}^{-}$ respectively. Also, we show that these minimizers are  solutions of \eqref{problem} and $u_{\lambda}\neq v_{\lambda}$. We have the following lemma.

\begin{lemma}\label{soln1}
	For all $\lambda \in\left(0, \Lambda_{1}\right)$, there exists $u_{\lambda} \in \mathcal{N}_{\lambda}^{+}$ such that $I_{\lambda}\left(u_{\lambda}\right)=\inf I_{\lambda}\left(\mathcal{N}_{\lambda}^{+}\right)$. Moreover, $u_{\lambda}$ is a non-negative weak solution to the problem \eqref{problem}.
\end{lemma}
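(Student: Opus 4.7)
I would minimize $I_\lambda$ on $\mathcal{N}_\lambda^+$ by the direct method, using the fibering analysis of Lemma \ref{n-nonempty} to control the weak limit. Take $\{u_n\}\subset\mathcal{N}_\lambda^+$ with $I_\lambda(u_n)\to c_\lambda^+:=\inf I_\lambda(\mathcal{N}_\lambda^+)$. By Lemma \ref{n-nonempty} the sequence is bounded in $X_0^{s,p}(\Omega)$, so reflexivity (Lemma \ref{l-2}) extracts $u_n\rightharpoonup u_\lambda$ along a subsequence, while the compact embedding of Theorem \ref{l-3} gives strong $L^{1-\delta}\cap L^{q+1}$ convergence and a.e.\ convergence. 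Since the Gagliardo seminorm decreases under $u\mapsto|u|$ and the lower-order integrands only see $|u|$, replacing $u_n$ by the rescaling $t_1(|u_n|)|u_n|\in\mathcal{N}_\lambda^+$ (guaranteed by Lemma \ref{n-nonempty}) does not increase $I_\lambda$, reducing to $u_n\geq 0$ and hence $u_\lambda\geq 0$.

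\textbf{Core argument.} Passing to the limit in the Nehari identity yields $L:=\lim\|u_n\|^p=\lambda\int f u_\lambda^{1-\delta}+\int g u_\lambda^{q+1}$, while weak lower semicontinuity gives $\|u_\lambda\|^p\leq L$. Since $\phi_u(1)<0$ on $\mathcal{N}_\lambda^+$ (Lemma \ref{n-nonempty}), $c_\lambda^+<0$, so $I_\lambda(u_\lambda)\leq c_\lambda^+<0=I_\lambda(0)$ forces $u_\lambda\not\equiv 0$. Assume for contradiction $\|u_\lambda\|^p<L$; then $\phi'_{u_\lambda}(1)<0$ and the shape of $\phi_{u_\lambda}$ in Lemma \ref{n-nonempty} gives either $1<t_1(u_\lambda)$ or $1>t_2(u_\lambda)$. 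The first case yields $I_\lambda(t_1(u_\lambda)u_\lambda)<I_\lambda(u_\lambda)\leq c_\lambda^+$, contradicting $t_1(u_\lambda)u_\lambda\in\mathcal{N}_\lambda^+$. To exclude $1>t_2(u_\lambda)$, I would refine $\{u_n\}$ by Ekeland's variational principle on $\mathcal{N}_\lambda^+\setminus\mathcal{N}_\lambda^0$ so that the resulting quasi-critical sequence forces $\phi''_{u_\lambda}(1)\geq 0$, i.e., $t_{\max}(u_\lambda)\geq 1$, ruling out the bad case. Hence $\|u_n\|^p\to\|u_\lambda\|^p$, and uniform convexity of $X_0^{s,p}(\Omega)$ upgrades the convergence to strong. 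Thus $u_\lambda\in\mathcal{N}_\lambda$; since $\mathcal{N}_\lambda^0=\emptyset$ for $\lambda<\Lambda_1$ and $I_\lambda(u_\lambda)=c_\lambda^+<0$, the limit lies in $\mathcal{N}_\lambda^+$ and is the sought minimizer.

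\textbf{Weak solution, positivity, and main obstacle.} Lemma \ref{l-crit} now shows that $u_\lambda$ is a critical point of $I_\lambda$. To rigorously derive the weak equation in Definition \ref{d-weak} despite the non-Fr\'echet-differentiability caused by $u^{-\delta}$, I would test $I_\lambda(u_\lambda+\epsilon\psi)\geq I_\lambda(u_\lambda)$ against $\psi\in C_c^\infty(\Omega)$, $\psi\geq 0$, divide by $\epsilon$, and apply Fatou's Lemma on the singular term; the reverse inequality follows by varying along the smooth fiber $t\mapsto t u_\lambda$ at $t=1$. Positivity $u_\lambda>0$ a.e.\ on $\Omega$ (needed to interpret $u_\lambda^{-\delta}$) follows from the strong minimum principle (Theorem \ref{min p}), since $u_\lambda$ is a non-negative weak supersolution of $(-\Delta_{p,\mathbb{G}})^s u=0$ as the right-hand side of \eqref{problem} is non-negative. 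The principal obstacle in this scheme is the strong convergence $u_n\to u_\lambda$, specifically excluding $1>t_2(u_\lambda)$; this typically demands the Ekeland refinement sketched above or a Brezis--Lieb-type decomposition for the Gagliardo seminorm adapted to the stratified setting. A secondary difficulty is carrying the singular term through the Euler--Lagrange derivation, handled via one-sided perturbations together with the density provided by Lemma \ref{l-hirano}.
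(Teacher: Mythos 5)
Your overall scaffolding coincides with the paper's: take a minimizing sequence $\{u_n\}\subset\mathcal{N}_\lambda^+$, get boundedness from coercivity and Lemma \ref{n-nonempty}, extract a weak limit $u_\lambda$ using reflexivity, pass the lower-order integrals to the limit using the compact embedding of Theorem \ref{l-3}, show $\inf I_\lambda(\mathcal{N}_\lambda^+)<0$, upgrade to strong convergence, and invoke Lemma \ref{l-crit} to identify $u_\lambda$ as a critical point. Where you depart from the paper is in the strong-convergence step. The paper does not perform any fiber-map case analysis or Ekeland refinement; it simply passes to the limit in the coercivity lower bound \eqref{eq3.6} and reads off a sign contradiction with $\inf I_\lambda(\mathcal{N}_\lambda^+)<0$. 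Your route instead reads $\phi'_{u_\lambda}(1)<0$ from $\|u_\lambda\|^p<L$, splits into the two cases $1<t_1(u_\lambda)$ or $1>t_2(u_\lambda)$, and handles the first cleanly (the drop $I_\lambda(t_1u_\lambda)<I_\lambda(u_\lambda)\le c_\lambda^+$ contradicts $t_1u_\lambda\in\mathcal{N}_\lambda^+$). Good.

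The genuine gap is the second case. Your proposal to ``refine by Ekeland so that $\phi''_{u_\lambda}(1)\ge 0$'' is not executed, and the mechanism by which it would work is unclear. Passing $\phi''_{u_n}(1)>0$ to the limit, using that the $L^{1-\delta}$ and $L^{q+1}$ integrals converge and $\|u_n\|^p\to L$, yields only
\begin{equation*}
(p-1)L+\delta\lambda\int_\Omega f\,|u_\lambda|^{1-\delta}\,dx-q\int_\Omega g\,|u_\lambda|^{q+1}\,dx\ge 0,
\end{equation*}
which involves $L$, not $\|u_\lambda\|^p$; since you assumed $\|u_\lambda\|^p<L$, this gives no lower bound on $\phi''_{u_\lambda}(1)$. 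An Ekeland sequence gives $\|I_\lambda'(u_n)\|_{(X_0^{s,p})^*}\to 0$ along the constraint, but it is not evident how this controls $t_{\max}(u_\lambda)$ from below; at the least you would have to show, via the implicit function theorem applied to the Nehari constraint away from $\mathcal{N}_\lambda^0$, that small perturbations of $u_n$ stay in $\mathcal{N}_\lambda^+$, and then convert quasi-criticality into the desired second-order information in the limit. As written this step is a sketch that could fail, and it is the crux.

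Two other remarks. First, your early replacement of $u_n$ by $t_1(|u_n|)|u_n|$ to arrange $u_n\ge0$ is legitimate because $[|v|]_{s,p}\le[v]_{s,p}$ forces $\min_t\phi_{|u_n|}(t)\le\min_t\phi_{u_n}(t)=I_\lambda(u_n)$, so the rescaled sequence is still minimizing; the paper defers this and simply uses $I_\lambda(u_\lambda)=I_\lambda(|u_\lambda|)$ at the end. Second, your treatment of the singular term via one-sided perturbations, Fatou's lemma, and the density given by Lemma \ref{l-hirano}, together with the positivity argument via the strong minimum principle (Theorem \ref{min p}), supplies detail the paper glosses over: the paper's appeal to Lagrange multipliers in Lemma \ref{l-crit} is more delicate than stated because $I_\lambda$ is not $C^1$ due to $u^{-\delta}$. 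That added rigor is correct and worthwhile; the open issue remains the exclusion of $1>t_2(u_\lambda)$.
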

\begin{proof} Since the functional $I_{\lambda}$ is bounded below on $\mathcal{N}_{\lambda}$ (hence bounded below on $\mathcal{N}_{\lambda}^{+}$), there exists a sequence $\{u_n\}\subset \mathcal{N}_{\lambda}^{+}$ such that $I_{\lambda}(u_n) \rightarrow \inf I_{\lambda}(\mathcal{N}_{\lambda}^{+})$ as $n\rightarrow +\infty$. Moreover, by using the coercivity of $I_{\lambda}$ and Lemma \ref{n-nonempty}, we have that  $\{u_n\}$ is bounded in $X_0^{s,p}(\Omega)$ and hence by the reflexiveness of $X_0^{s,p}(\Omega)$, there exists $u_{\lambda}\in X_0^{s,p}(\Omega)$ such that $u_n\rightharpoonup u_{\lambda}$ weakly in $X_0^{s,p}(\Omega)$. Thus,  by the compact embedding (ref. Theorem \ref{l-3}), we get $u_n\rightarrow u_{\lambda}$ strongly in $L^r(\Omega)$ for $1\leq r<p_s^*$ and $u_n\rightarrow u_{\lambda}$ pointwise a.e. in $\Omega$. Our  aim is to show $u_n\rightarrow u_{\lambda}$ strongly in $X_0^{s,p}(\Omega)$. Prior to that we prove that $\inf I_{\lambda}(\mathcal{N}_{\lambda}^{+})<0$. Indeed, for $w\in \mathcal{N}_{\lambda}^{+}$, the fiber map $\phi$ has a local minima in $\mathcal{N}_{\lambda}^{+}$ and $\phi^{''}(1)>0$. Thus, from \eqref{fiber-3}, we get 
	\begin{equation}\label{e1}
		\left(\frac{p-1+\delta}{\delta+q}\right)\left\|w\right\|_{X_0^{s,p}(\Omega)}^{p}>\int_{\Omega}\left|w\right|^{q+1} d x .
	\end{equation}
	The above inequality \eqref{e1} with the fact that $q>p-1$ retrieves the required claim. In fact, we have 
	\begin{align*}
		I_{\lambda}\left(w\right) &=\left(\frac{1}{p}-\frac{1}{1-\delta}\right)\left\|w\right\|_{X_0^{s,p}(\Omega)}^{p}+\left(\frac{1}{1-\delta}-\frac{1}{q+1}\right) \int_{\Omega}\left|w\right|^{q+1} dx\\
		&\leq\frac{(1-\delta-p)}{p(1-\delta)}\left\|w\right\|_{X_0^{s,p}(\Omega)}^{p}+\frac{(p-1+\delta)}{(q+1)(1-\delta)}\left\|w\right\|_{X_0^{s,p}(\Omega)}^p\\
		&=\left(-\frac{1}{p}+\frac{1}{q+1}\right)\left(\frac{p-1+\delta}{1-\delta}\right)\left\|w\right\|_{X_0^{s,p}(\Omega)}^{p}\\
		&=\left(\frac{p-(q+1)}{p(q+1)}\right)\left(\frac{p-1+\delta}{1-\delta}\right)\left\|w\right\|_{X_0^{s,p}(\Omega)}^{p}\\
		&<0.
	\end{align*}
	We now prove the strong convergence by contradiction. Suppose the strong convergence $u_n\rightarrow u_{\lambda}$ in $X_0^{s,p}(\Omega)$ fails. Then we have 
	\begin{equation}
		\|u_{\lambda}\|_{X_0^{s,p}(\Omega)}<\lim\inf\limits_{n\rightarrow\infty}\|u_n\|_{X_0^{s,p}(\Omega)}.
	\end{equation} 	
	Further, by the compact embedding (see Theorem \ref{l-3}), we have
	\begin{align}
		\int_{\Omega}g(x)|u_{\lambda}|^{q+1}dx=\lim\inf\limits_{n\rightarrow\infty}\int_{\Omega}g(x)|u_n|^{q+1} dx\label{est1}\\
		\int_{\Omega}f(x)|u_{\lambda}|^{1-\delta}dx=\lim\inf\limits_{n\rightarrow\infty}\int_{\Omega}f(x)|u_n|^{1-\delta} dx\label{est2}.
	\end{align}	
	Since $\{u_n\}\subset \mathcal{N}_{\lambda}^{+}$ then $\phi'(1)=\langle I_{\lambda}'(u_n), u_n\rangle=0.$ Thus, we get from  \eqref{eq3.6} that
	\begin{align}
		I_{\lambda}(u_n)&\geq \left(\frac{1}{p}-\frac{1}{q+1}\right) \|u_n\|_{X_0^{s,p}(\Omega)}^{p}-c\lambda\|f\|_{\infty}\left(\frac{1}{1-\delta}-\frac{1}{q+1}\right)\|u_n\|_{X_0^{s,p}(\Omega)}^{1-\delta}.
	\end{align}
	Therefore, passing to the limit as $n\rightarrow\infty,$ we deduce
	\begin{align}
		\inf I_{\lambda}(\mathcal{N}_{\lambda}^{+})&\geq \lim\limits_{n\rightarrow\infty} \left(\frac{1}{p}-\frac{1}{q+1}\right) \|u_n\|_{X_0^{s,p}(\Omega)}^{p}-\lim\limits_{n\rightarrow\infty}c\lambda\|f\|_{\infty}\left(\frac{1}{1-\delta}-\frac{1}{q+1}\right)\|u_n\|_{X_0^{s,p}(\Omega)}^{1-\delta}\nonumber\\
		&>\left(\frac{1}{p}-\frac{1}{q+1}\right) \|u_{\lambda}\|_{X_0^{s,p}(\Omega)}^{p}-c\lambda\|f\|_{\infty}\left(\frac{1}{1-\delta}-\frac{1}{q+1}\right)\|u_{\lambda}\|_{X_0^{s,p}(\Omega)}^{1-\delta}\nonumber\\
		&>0,
	\end{align}	
	which is impossible since $\inf I_{\lambda}(\mathcal{N}_{\lambda}^{+})<0$. Thus, $u_n\rightarrow u_{\lambda}$ strongly in $X_0^{s,p}(\Omega)$. Finally, we get $\phi_{u_{\lambda}}^{''}(1)>0$ for all $\lambda\in(0,\Lambda_1)$. Hence, we have $u_{\lambda}\in \mathcal{N}_{\lambda}^{+}$ and $I_{\lambda}(u_{\lambda})=I_{\lambda}(\mathcal{N}_{\lambda}^{+})$. Since, $I_{\lambda}(u_{\lambda})=I_{\lambda}(|u_{\lambda}|)$, we can assume that $u_{\lambda}$ is non-negative. Finally, by the Lemma \ref{l-crit}, we deduce that $u_{\lambda}$ is a critical point of $I_{\lambda}(u_{\lambda})$ and hence a weak solution to the problem \eqref{problem}.	
\end{proof}

The next lemma guarantees the existence of a minimizer in $\mathcal{N}_{\lambda}^{-}$.

\begin{lemma}\label{soln2}
	For all $\lambda \in\left(0, \Lambda_{1}\right)$, there exists $v_{\lambda} \in \mathcal{N}_{\lambda}^{-}$ such that $I_{\lambda}\left(v_{\lambda}\right)=\inf I_{\lambda}\left(\mathcal{N}_{\lambda}^{-}\right)$. Moreover, $v_{\lambda}$ is a non-negative weak solution to the problem \eqref{problem}.
\end{lemma}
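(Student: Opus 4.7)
The approach mirrors Lemma \ref{soln1}, with the key structural difference that $\mathcal{N}_{\lambda}^{-}$ singles out points where the fiber map $\phi_{v}$ attains a local \emph{maximum} (not minimum), so the ``energy decreases along the fiber" inequality runs in the opposite direction. The plan is: take a minimizing sequence in $\mathcal{N}_{\lambda}^{-}$, extract a weak limit via coercivity and reflexivity, prove this limit is nontrivial, upgrade weak to strong convergence using the fiber-map geometry, and finally invoke Lemma \ref{l-crit}. Fix a sequence $\{v_{n}\} \subset \mathcal{N}_{\lambda}^{-}$ with $I_{\lambda}(v_{n}) \to c_{\lambda}^{-} := \inf I_{\lambda}(\mathcal{N}_{\lambda}^{-})$. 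Coercivity (Lemma \ref{l-coercive}) forces $\{v_{n}\}$ to be bounded in $X_{0}^{s,p}(\Omega)$; reflexivity (Lemma \ref{l-2}) yields $v_{\lambda} \in X_{0}^{s,p}(\Omega)$ with $v_{n} \rightharpoonup v_{\lambda}$ along a subsequence, and the compact embedding in Theorem \ref{l-3} upgrades this to strong convergence in $L^{r}(\Omega)$ for every $r \in [1,p_{s}^{*})$ and a.e.\ convergence, so in particular
\[
\int_{\Omega} f(x)|v_{n}|^{1-\delta}\,dx \to \int_{\Omega} f(x)|v_{\lambda}|^{1-\delta}\,dx \quad \text{and} \quad \int_{\Omega} g(x)|v_{n}|^{q+1}\,dx \to \int_{\Omega} g(x)|v_{\lambda}|^{q+1}\,dx.
\]

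To show $v_{\lambda} \not\equiv 0$, I combine $\phi_{v_{n}}''(1) < 0$ with the Nehari identity $\|v_{n}\|_{X_{0}^{s,p}(\Omega)}^{p} = \lambda\int_{\Omega} f|v_{n}|^{1-\delta}\,dx + \int_{\Omega} g|v_{n}|^{q+1}\,dx$ to obtain
\[
\int_{\Omega} g(x)|v_{n}|^{q+1}\,dx \;>\; \tfrac{p-1+\delta}{q+\delta}\,\|v_{n}\|_{X_{0}^{s,p}(\Omega)}^{p} \;\geq\; \tfrac{p-1+\delta}{q+\delta}\,c_{0}^{\,p},
\]
where $c_{0} > 0$ is the uniform lower bound on $\|v\|$ over $\mathcal{N}_{\lambda}^{-}$ from Lemma \ref{n-nonempty}; strong $L^{q+1}$ convergence transfers this bound to $v_{\lambda}$, giving $\int_{\Omega} g|v_{\lambda}|^{q+1} > 0$ and hence $v_{\lambda} \not\equiv 0$. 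Lemma \ref{n-nonempty} therefore supplies a unique $\tilde{t} > 0$ with $\tilde{t}\,v_{\lambda} \in \mathcal{N}_{\lambda}^{-}$. For the strong convergence I argue by contradiction: if $\|v_{\lambda}\|_{X_{0}^{s,p}(\Omega)}^{p} < \liminf_{n} \|v_{n}\|_{X_{0}^{s,p}(\Omega)}^{p}$ strictly, then for every fixed $t>0$ the explicit form of $I_{\lambda}$ together with the $L^{r}$-convergence yields $I_{\lambda}(t\,v_{\lambda}) < \liminf_{n} I_{\lambda}(t\,v_{n})$. Using the identity $I_{\lambda}(v) = (\tfrac{1}{p}-\tfrac{1}{q+1})\|v\|^{p} - \lambda(\tfrac{1}{1-\delta}-\tfrac{1}{q+1})\int f|v|^{1-\delta}\,dx$ valid on $\mathcal{N}_{\lambda}$, the Sobolev bound $\int f|v|^{1-\delta} \leq C\|f\|_{\infty}\|v\|^{1-\delta}$, and the lower bound $\|v\|\geq c_{0}$, one sees that after shrinking $\Lambda$ below $\Lambda_{1}$ one has $c_{\lambda}^{-} > 0$; thus $I_{\lambda}(v_{n}) > 0$ for large $n$. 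Since $\phi_{v_{n}}(0)=0$ and $\phi_{v_{n}}$ dips negative, rises to its local max at $t=1$, then decreases to $-\infty$, it follows that $I_{\lambda}(\tau v_{n}) \leq \sup_{t\geq 0} I_{\lambda}(t v_{n}) = I_{\lambda}(v_{n})$ for every $\tau\geq 0$ (once $I_{\lambda}(v_{n})>0$). Setting $\tau=\tilde{t}$ and taking $\liminf$ gives $\liminf I_{\lambda}(\tilde{t}\,v_{n}) \leq c_{\lambda}^{-}$, while $\tilde{t}\,v_{\lambda} \in \mathcal{N}_{\lambda}^{-}$ yields $I_{\lambda}(\tilde{t}\,v_{\lambda}) \geq c_{\lambda}^{-}$, contradicting the strict inequality above.

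Therefore $v_{n} \to v_{\lambda}$ strongly in $X_{0}^{s,p}(\Omega)$; passing to the limit in $\phi_{v_{n}}'(1) = 0$ and $\phi_{v_{n}}''(1) \leq 0$ and invoking $\mathcal{N}_{\lambda}^{0} = \emptyset$ for $\lambda \in (0,\Lambda_{1})$ (Lemma \ref{n-nonempty}) forces $v_{\lambda} \in \mathcal{N}_{\lambda}^{-}$ with $I_{\lambda}(v_{\lambda}) = c_{\lambda}^{-}$. Non-negativity is obtained as in Lemma \ref{soln1} from $I_{\lambda}(|v_{\lambda}|) \leq I_{\lambda}(v_{\lambda})$ together with the minimality on $\mathcal{N}_{\lambda}^{-}$, and Lemma \ref{l-crit} then upgrades $v_{\lambda}$ to a critical point of $I_{\lambda}$, hence to a non-negative weak solution of \eqref{problem}. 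The principal obstacle is precisely the strong-convergence step: unlike on $\mathcal{N}_{\lambda}^{+}$, where the sign $c_{\lambda}^{+} < 0$ produces the contradiction cheaply, on $\mathcal{N}_{\lambda}^{-}$ one must exploit the global fiber-map structure, and the key inequality $I_{\lambda}(\tilde{t}\,v_{n}) \leq I_{\lambda}(v_{n})$ requires $c_{\lambda}^{-} > 0$, which may force the final threshold $\Lambda$ in Theorem \ref{main-thm} to be strictly smaller than $\Lambda_{1}$.
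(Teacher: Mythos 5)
Your proposal is correct and follows essentially the same route as the paper's proof: take a minimizing sequence in $\mathcal{N}_\lambda^-$, extract a weak limit via coercivity and the compact embedding, rule out failure of strong convergence by combining strict weak lower semicontinuity of the norm with the fiber-map inequality $I_\lambda(\tau v_n)\leq I_\lambda(v_n)$ applied at the point $\tilde t\,v_\lambda\in\mathcal{N}_\lambda^-$, and conclude with Lemma \ref{l-crit}. You are somewhat more careful than the paper at two points that genuinely deserve the attention: you explicitly establish $v_\lambda\not\equiv 0$ before invoking the fibering lemma to produce $\tilde t$, and you flag that the identity $\sup_{t\geq 0}\phi_{v_n}(t)=\phi_{v_n}(1)$ (needed to pass from $I_\lambda(\tilde t\,v_n)$ to $I_\lambda(v_n)$) holds only when $\phi_{v_n}(1)=I_\lambda(v_n)>0$, which rests on the uniform lower bound $\|v_n\|\geq c_0$ over $\mathcal{N}_\lambda^-$ from Lemma \ref{n-nonempty} and imposes the additional smallness constraint on $\lambda$ coming from \eqref{eq-n-}. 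The paper derives this constraint but states it loosely for $z\in\mathcal{N}_\lambda$ (rather than $z\in\mathcal{N}_\lambda^-$, where the lower bound on $\|z\|$ is actually available) and does not explicitly fold it into the threshold $\Lambda$ of Theorem \ref{main-thm}; your version makes this hypothesis tracking cleaner.
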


\begin{proof}
	Proceeding as in the previous Lemma \ref{soln1}, we can assume that there exists a sequence $\{v_n\}\subset \mathcal{N}_{\lambda}^{-}$ such that $I_{\lambda}(v_n) \rightarrow \inf I_{\lambda}(\mathcal{N}_{\lambda}^{-})$ as $n\rightarrow +\infty$ and there exists $v_{\lambda}\in X_0^{s,p}(\Omega)$ such that $v_n\rightharpoonup v_{\lambda}$ weakly in $X_0^{s,p}(\Omega)$. Therefore, the compact embedding (see Theorem \ref{l-3}) guarantees that $v_n\rightarrow v_{\lambda}$ strongly in $L^r(\Omega)$ for $1\leq r<p_s^*$ and $v_n\rightarrow v_{\lambda}$ pointwise a.e. in $\Omega$. Let us first prove that $\inf I_{\lambda}(\mathcal{N}_{\lambda}^{-})>0$. Suppose 
	$z\in \mathcal{N}_{\lambda}$. Therefore, using \eqref{eq3.6}, we get
	\begin{align}
		I_{\lambda}(z) &\geq \left(\frac{1}{p}-\frac{1}{q+1}\right) \|z\|_{X_0^{s,p}(\Omega)}^{p}-c\lambda\|f\|_{\infty}\left(\frac{1}{1-\delta}-\frac{1}{q+1}\right)\|z\|_{X_0^{s,p}(\Omega)}^{1-\delta}\nonumber\\
		&=\|z\|_{X_0^{s,p}(\Omega)}^{1-\delta}\left(\frac{q+1-p}{p(q+1)}\right)\|z\|_{X_0^{s,p}(\Omega)}^{p-1+\delta}-c\lambda\|f\|_{\infty}\left(\frac{q+\delta}{(1-\delta)(q+1)}\right).\label{eq-n-}
	\end{align}
	Now, for any $\lambda<\frac{(q+1-p)(1-\delta)}{cp\|f\|_{\infty}}$ in \eqref{eq-n-}, we get $I_{\lambda}(z)>0$. Since, $\mathcal{N}_{\lambda}^{+}\cap \mathcal{N}_{\lambda}^{-}=\emptyset$ and $\mathcal{N}_{\lambda}^{+}\cup \mathcal{N}_{\lambda}^{-}=\mathcal{N}_{\lambda}$ (ref. Lemma \ref{n-nonempty}), then we must have $z\in \mathcal{N}_{\lambda}^{-}$. Again, for $z\in \mathcal{N}_{\lambda}^{-}$, there exists $t>0$ such that 
	$\phi_z'(tz)=I_{\lambda}'(tz)<0$, since $1-\delta< 1<p<q+1$. This implies $tz\in \mathcal{N}_{\lambda}^{-}$. This is also true  for $v_{\lambda}$. We are now in a state to prove the strong convergence. Suppose the strong convergence $v_n\rightarrow v_{\lambda}$ in $X_0^{s,p}(\Omega)$ fails. Then  proceeding as Lemma \ref{soln1}, we obtain \begin{align}
		I_{\lambda}(tv_{\lambda})&\leq \lim\limits_{n\rightarrow\infty}I_{\lambda}(tv_n)\leq \lim\limits_{n\rightarrow\infty}I_{\lambda}(v_n)=\inf I_{\lambda}\left(\mathcal{N}_{\lambda}^{-}\right).
	\end{align}
	
	This estimate gives the equality $I_{\lambda}(tv_{\lambda})=\inf I_{\lambda}\left(\mathcal{N}_{\lambda}^{-}\right)$, which is a contradiction. Thus, $v_n\rightarrow v_{\lambda}$ strongly in $X_0^{s,p}(\Omega)$ and $I_{\lambda}(v_{\lambda})=I_{\lambda}(\mathcal{N}_{\lambda}^{-})$.	
	Since, $I_{\lambda}(u_{\lambda})=I_{\lambda}(|u_{\lambda}|)$, we can assume that $u_{\lambda}$ is non-negative. Finally, by the Lemma \ref{l-crit}, we deduce that $u_{\lambda}$ is a critical point of $I_{\lambda}(u_{\lambda})$ and hence a weak solution to the problem \eqref{problem}.
\end{proof}
{{\bf{Proof of Proposition \ref{lambda-finite}:}}} Clearly, from Lemma \ref{n-nonempty}, we get $\Lambda_1 >0$. We will prove the boundedness of $\Lambda_1$ by contradiction. Suppose $\Lambda_1= +\infty$. Let $\lambda_1$ be the first eigenvalue of the problem \eqref{p-kuusi} and let $\phi_1$ be the corresponding first eigenfunction. Choose $\bar{\lambda}>0$ such that
\begin{equation}\label{lambda-finite-1}
    \frac{\bar{\lambda}f(x)}{t^{\delta}}+g(x)t^q>(\lambda_1+\epsilon)t^{p-1}
\end{equation}
for all $t \in (0,\infty)$, $x\in\Omega$ and for some $\epsilon \in(0,1).$ Recall the weak solution $u_{\lambda}\in\mathcal{N}_{\lambda}^{+}$. Then for the above choice of $\bar{\lambda}, \bar{u}:=u_{\bar{\lambda}}\in X_0^{s,p}(\Omega)$ is weak supersolution to

\begin{align}\label{lambda-fin-2}
    (-\Delta_{p,{\mathbb{G}}})^su &=(\lambda_1+\epsilon)|u|^{p-2}u~\text{in}~\Omega,\nonumber\\
    u&=0~\text{in}~{\mathbb{G}}\setminus\Omega.
\end{align}
Then we can choose $r>0$ such that $\underline{u}=r\phi_1$ becomes a subsolution to the problem \eqref{lambda-fin-2}. Now by using the boundedness of $\phi_1$, we can choose a smaller $r>0$ (this choice is possible since $r\phi_1$ is a subsolution) such that $\underline{u}\leq\bar{u}$. Now define $w=r\phi_1$ and $w_n\in X_0^{s,p}(\Omega)$ such that $$(-\Delta_{p,{\mathbb{G}}})^sw_k =(\lambda_1+\epsilon)|w_{k-1}|^{p-2}w_{k-1}~\text{in}~\Omega.$$
From Lemma \ref{ev-weakcomp}, for all $x\in\Omega$ we have 
$$r\phi_1=w_0\leq w_1\leq...\leq w_k\leq....\leq u_{\bar{\lambda}}.$$
This shows that $\{w_k\}$ is bounded in $X_0^{s,p}(\Omega)$ and hence from the reflexivity, we conclude that $w_k\rightharpoonup w$ in $X_0^{s,p}(\Omega)$, up to a subsequence. Thus $w$ becomes a weak solution to \eqref{lambda-fin-2}. Since $\lambda_1+\epsilon>\lambda_1$, we arrive at a contradiction to the fact that $\lambda_1$ is simple and isolated. Hence, $\Lambda_1<\infty$.\\

Having developed all the necessary tools now we are ready to prove our main result. \\

 {\it Proof of Theorem \ref{main-thm}:} Set $\Lambda=\min\{\lambda_*, \Lambda_1\}$. Then,  by using the fact $\mathcal{N}_{\lambda}^{+}\cap \mathcal{N}_{\lambda}^{-}=\emptyset$ and $\mathcal{N}_{\lambda}^{+}\cup \mathcal{N}_{\lambda}^{-}=\mathcal{N}_{\lambda}$ together with Lemma \ref{soln1} and Lemma \ref{soln2}, we get two solutions $u_{\lambda}\neq v_{\lambda}$ in $X_0^{s,p}(\Omega)$. In other words, it shows that the problem \eqref{problem} has at least two non-negative solutions for every $\lambda\in(0,\Lambda)$.

\section{Regularity results for the obtained solutions} \label{sec6}

In this section we prove that all nonnegative solutions to the problem \eqref{problem} are uniformly bounded. Let us begin with the following weak comparison principle.
\begin{lemma}[Weak Comparison Principle]\label{weakcomp}
	Let $\lambda>0$, $0<\delta, s<1<p<\infty$ and $u, v\in X_0^{s,p}(\Omega)$. Suppose that  $$(-\Delta_{p,{\mathbb{G}}})^sv-\frac{\lambda f(x)}{v^{\delta}}\geq(-\Delta_{p, {\mathbb{G}}})^su-\frac{\lambda f(x)}{u^{\delta}}$$ weakly with $v=u=0$ in ${\mathbb{G}}\setminus\Omega$.
	Then $v\geq u$ in ${\mathbb{G}}.$
\end{lemma}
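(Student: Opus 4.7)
The plan is to test the weak inequality with $\phi = (u-v)_+$ and then exploit two monotonicities: the monotonicity of the nonlinearity $t \mapsto |t|^{p-2}t$ inside the fractional $p$-sub-Laplacian, and the monotonicity of the singular term $t \mapsto -t^{-\delta}$ on $(0,\infty)$.

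First I would verify admissibility. Since $u,v \in X_0^{s,p}(\Omega)$ with $u=v=0$ on $\mathbb{G}\setminus\Omega$, the function $\phi=(u-v)_+$ belongs to $X_0^{s,p}(\Omega)$ and vanishes in $\mathbb{G}\setminus\Omega$; hence it is an admissible non-negative test function. Subtracting the weak formulations yields
\begin{equation*}
\iint_{\mathbb{G}\times\mathbb{G}} \frac{\Phi(V(x,y)) - \Phi(U(x,y))}{|y^{-1}x|^{Q+ps}}\bigl(\phi(x)-\phi(y)\bigr)\,dx\,dy \;\geq\; \lambda\int_\Omega f(x)\!\left(\frac{1}{v^\delta}-\frac{1}{u^\delta}\right)\phi\,dx,
\end{equation*}
where $\Phi(t)=|t|^{p-2}t$, $U(x,y)=u(x)-u(y)$, $V(x,y)=v(x)-v(y)$.

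Next I would analyze the right-hand side. On the set $\{u>v\}$ (where $\phi>0$), we have $u>v>0$ (here I use that the problem is set up so both $u,v>0$ where relevant, as is standard for the singular weak formulation), so $v^{-\delta}>u^{-\delta}$; combined with $f>0$, the integrand is non-negative, hence the RHS is $\geq 0$.

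Then I would show the left-hand side is $\leq 0$, by proving
\begin{equation*}
\iint_{\mathbb{G}\times\mathbb{G}} \frac{\bigl(\Phi(U)-\Phi(V)\bigr)\bigl(\phi(x)-\phi(y)\bigr)}{|y^{-1}x|^{Q+ps}}\,dx\,dy \;\geq\; 0.
\end{equation*}
I split $\mathbb{G}\times\mathbb{G}$ into four regions according to whether each of $x,y$ lies in $A:=\{u>v\}$ or $A^c$. On $A^c\times A^c$ the integrand vanishes. On $A\times A$, $\phi(x)-\phi(y)=U(x,y)-V(x,y)$ and monotonicity of $\Phi$ gives $(\Phi(U)-\Phi(V))(U-V)\geq 0$. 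On $A\times A^c$, $\phi(x)-\phi(y)=u(x)-v(x)>0$ and $U-V=(u(x)-v(x))+(v(y)-u(y))>0$, so $\Phi(U)>\Phi(V)$ and the product is positive; the case $A^c\times A$ follows by symmetry.

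Finally I would combine the two sides: the LHS of the original inequality equals minus the displayed quantity above, so it is $\leq 0$, while the RHS is $\geq 0$. Both chains force equality, so the integrand on $\{u>v\}^2$ must vanish, forcing $U\equiv V$ there; more directly, the strict positivity in the mixed-region case forces $|A\times A^c|=0$, which together with $\phi=0$ outside $A$ yields $\phi=(u-v)_+\equiv 0$ a.e., i.e.\ $u\leq v$ in $\mathbb{G}$.

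The main obstacle is the mixed-region analysis for the fractional $p$-sub-Laplacian difference and, at a technical level, justifying that $(u-v)_+$ is a legitimate test function given the singular term $f(x)u^{-\delta}$; this is handled by assuming (as is standard in the singular framework and will be used throughout the paper) that both $u$ and $v$ are strictly positive on compact subsets of $\Omega$, so that $f(x)(u-v)_+/u^\delta$ and $f(x)(u-v)_+/v^\delta$ are integrable on $\Omega$.
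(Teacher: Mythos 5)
Your proof is correct, and it reaches the paper's conclusion by a genuinely different mechanism for handling the nonlocal term. The paper invokes the elementary identity $|b|^{p-2}b - |a|^{p-2}a = (p-1)(b-a)\int_0^1 |a+t(b-a)|^{p-2}\,dt$, which converts the pairing $\langle(-\Delta_{p,\mathbb{G}})^su-(-\Delta_{p,\mathbb{G}})^sv,(u-v)_+\rangle$ into $(p-1)\iint Q(x,y)[\psi(x)-\psi(y)][\phi(x)-\phi(y)]\,|y^{-1}x|^{-Q-ps}\,dx\,dy$ with $\psi=u-v$, $\phi=\psi^+$, and then uses the pointwise bound $[\psi(x)-\psi(y)][\phi(x)-\phi(y)]\geq(\psi^+(x)-\psi^+(y))^2\geq 0$ to see the pairing is nonnegative, while the singular term forces it to be nonpositive, so everything collapses to zero and $\psi^+\equiv 0$. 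You instead split $\mathbb{G}\times\mathbb{G}$ into the four regions generated by $A=\{u>v\}$ and use the strict monotonicity of $t\mapsto|t|^{p-2}t$ region by region; the crucial extra dividend of your decomposition is that the integrand is \emph{strictly} positive on the mixed regions $A\times A^c$ and $A^c\times A$, so that the vanishing of the integral directly forces $|A|\cdot|A^c|=0$, and since $A\subset\Omega$ while $\Omega$ is bounded, $|A^c|>0$ and hence $|A|=0$. Both routes are valid. Your argument is slightly more elementary (no integral representation of $\Phi(b)-\Phi(a)$, only its monotonicity) and makes transparent exactly where the ``forcing'' comes from; the paper's is more compact. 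You are also more careful than the paper in explicitly flagging the admissibility of $(u-v)_+$ in the presence of the singular term, which in this framework is covered by the local positivity built into the weak-solution definition (Definition \ref{d-weak}).
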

\begin{proof}
	It follows from the statement of the lemma that 
	\begin{align}\label{3compprinci}
		\langle(-\Delta_{p,{\mathbb{G}}})^sv,\phi\rangle-\int_{\Omega}\frac{\lambda\phi}{v} dx&\geq\langle(-\Delta_{p,{\mathbb{G}}})^su,\phi\rangle-\int_{\Omega}\frac{\lambda\phi}{u} dx,
	\end{align}
	for all non-negative $\phi\in X_0^{s,p}(\Omega)$. 
	%In particular choose $\phi=(u-v)^{+}$. To this choice, the inequality in \eqref{3compprinci} looks as follows.
	%{\small\begin{align}\label{3compprinci1}
			%		&\langle(-\Delta_{p,{\mathbb{G}}})^sv-(-\Delta_{p,{\mathbb{G}}})^su,(u-v)^{+}\rangle-\int_{\Omega}\lambda(u-v)^{+}\left(\frac{1}{v}-\frac{1}{u}\right) dx\geq 0.
			%\end{align}}
			
			Recall the identity
			\begin{align}\label{comp principle}
				|b|^{p-2}b-|a|^{p-2}a&=(p-1)(b-a)\int_0^1|a+t(b-a)|^{p-2} dt
			\end{align}
			and define, 
			\begin{align}
				&Q(x,y)=\int_0^1|(u(x)-u(y))+t((v(x)-v(y))-(u(x)-u(y)))|^{p-2} dt.
			\end{align}
			Then, by choosing $a=v(x)-v(y)$, $b=u(x)-u(y)$ we have
			\begin{align}
				&|u(x)-u(y)|^{p-2}(u(x)-u(y))\nonumber-|v(x)-v(y)|^{p-2}(u(x)-u(y))\nonumber\\
				&=(p-1)\{(u(y)-v(y))-(u(x)-v(x))\}Q(x,y).
			\end{align}
			Set $\psi=u-v=(u-v)_{+}-(u-v)_{-}$, where $(u-v)_{\pm}=\max\{\pm(u-v),0\}$. Then, for $\phi=(u-v)^{+}$ we  obtain
			\begin{align}\label{3negativity}
				[\psi(x)-\psi(y)][\phi(x)-\phi(y)]&=(\psi^{+}(x)-\psi^{+}(y))^2\geq0.
			\end{align}
			Therefore,  the inequality  \eqref{3negativity} together with the test function $\phi=(u-v_{+})$ yields that
			\begin{align*}
				0&\geq\int_{\Omega}\lambda(u-v)_{+}\left[\frac{1}{v^{\delta}}-\frac{1}{u^{\delta}}\right]\\
				&\geq \langle(-\Delta_{p,{\mathbb{G}}})^su-(-\Delta_{p,{\mathbb{G}}})^sv,(u-v)_{+}\rangle\\
				&=(p-1)\iint_{\mathbb{G}\times \mathbb{G}}\frac{Q(x,y)(\psi^{+}(x)-\psi^{+}(y))^2}{|y^{-1}x|^{Q+ps}}dxdy
				\geq0.
			\end{align*}
			Hence, we have $v\geq u$ a.e. in ${\mathbb{G}}$.
		\end{proof}
		
				\begin{remark}
			It is worth noting that the result of Lemma \ref{weakcomp} also holds for more general nonlocal operator of subelliptic type on  homogeneous Lie groups.			
		\end{remark}

		We recall the following three results from \cite{BP16} which will be useful for establishing subsequent results. 
		\begin{proposition}[\cite{BP16}]\label{beta convex}
			For every $\beta>0$ and $1\leq p<\infty$ we have the following inequality
			$$\left(\frac{1}{\beta}\right)^{\frac{1}{p}}\left(\frac{p+\beta-1}{p}\right)\geq 1.$$
		\end{proposition}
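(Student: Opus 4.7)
The plan is to recognise this as a direct consequence of the weighted arithmetic--geometric mean inequality. Since $p \geq 1$, the weights $\tfrac{1}{p}$ and $\tfrac{p-1}{p}$ are nonnegative and sum to $1$, so applying weighted AM--GM to the two positive numbers $\beta$ and $1$ gives
$$\beta^{1/p}\cdot 1^{(p-1)/p} \;\leq\; \frac{1}{p}\,\beta + \frac{p-1}{p}\,\cdot 1 \;=\; \frac{p+\beta-1}{p}.$$
Dividing both sides by $\beta^{1/p}>0$ then yields
$$1 \;\leq\; \beta^{-1/p}\cdot\frac{p+\beta-1}{p} \;=\; \left(\frac{1}{\beta}\right)^{1/p}\!\left(\frac{p+\beta-1}{p}\right),$$
which is precisely the claimed inequality, with equality if and only if $\beta=1$ (or $p=1$).

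If one prefers a direct calculus argument, set $f(\beta) := \beta^{-1/p}\cdot\tfrac{p+\beta-1}{p}$ on $(0,\infty)$. A short computation gives
$$f'(\beta) \;=\; \frac{p-1}{p^{2}}\,\beta^{-1/p-1}(\beta-1),$$
which is negative on $(0,1)$ and positive on $(1,\infty)$ when $p>1$ (and $f\equiv 1$ when $p=1$). Hence $\beta=1$ is the unique global minimiser and $f(1)=1$, so $f(\beta)\geq 1$ for all $\beta>0$. Either route works and there is no genuine obstacle; the weighted AM--GM derivation is the cleanest since it avoids case distinctions in $p$.
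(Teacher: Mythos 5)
Your proof is correct. The paper does not reproduce a proof of this proposition at all — it simply cites it to \cite{BP16} — so there is nothing in the paper to compare against, and your self-contained argument is exactly what one would want. The weighted AM--GM derivation is clean: applying the weights $\tfrac1p$ and $\tfrac{p-1}{p}$ (nonnegative and summing to one since $p\ge 1$) to the pair $(\beta,1)$ gives $\beta^{1/p}\le \tfrac{\beta+p-1}{p}$, and division by $\beta^{1/p}>0$ finishes it. The alternative calculus verification is also correct (your computation of $f'(\beta)=\tfrac{p-1}{p^2}\beta^{-1/p-1}(\beta-1)$ checks out), though the AM--GM route is preferable since it avoids any case split on $p$.
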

		\begin{proposition}[\cite{BP16}]\label{l infty 1}
			Let $1<p<\infty$ and let $f: \mathbb{R}\rightarrow \mathbb{R}$ to be a $C^{1}$ convex function and $J_{p}(t):=|t|^{p-2}t$. Then, the following inequality
			\begin{equation}\label{bdd est1 remark}
				J_{p}(a-b)\big[AJ_{p}(f'(a))-BJ_{p}(f'(b))\big]\geq(f(a)-f(b))^{p-2}(f(a)-f(b))(A-B),
			\end{equation}
			holds 
			for every $a, b\in \mathbb{R}$ and every $A, B\geq 0.$
		\end{proposition}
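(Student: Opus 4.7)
The plan is to exploit two pieces of structure: the multiplicative behavior of $J_p$ and the convexity of $f$. First I would observe the simple but crucial identity
\begin{equation*}
J_p(xy) = |xy|^{p-2}xy = |x|^{p-2}|y|^{p-2}xy = J_p(x)J_p(y), \qquad x,y\in\mathbb{R}.
\end{equation*}
Applying this identity to the left-hand side of \eqref{bdd est1 remark} converts the product $J_p(a-b)J_p(f'(a))$ into $J_p((a-b)f'(a))$ and similarly for the $b$-term, so the inequality becomes
\begin{equation*}
A\, J_p\bigl((a-b)f'(a)\bigr) - B\, J_p\bigl((a-b)f'(b)\bigr) \;\geq\; (A-B)\, J_p\bigl(f(a)-f(b)\bigr).
\end{equation*}

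Next I would use the two standard consequences of the convexity of $f$: for any $a,b\in\mathbb{R}$,
\begin{equation*}
f(a) \geq f(b) + f'(b)(a-b), \qquad f(b) \geq f(a) + f'(a)(b-a),
\end{equation*}
which combine (regardless of the sign of $a-b$) to give the sandwich
\begin{equation*}
(a-b)f'(b) \;\leq\; f(a)-f(b) \;\leq\; (a-b)f'(a).
\end{equation*}
Since $J_p(t)=|t|^{p-2}t$ is (strictly) monotone increasing on all of $\mathbb{R}$, the same chain of inequalities passes to
\begin{equation*}
J_p\bigl((a-b)f'(b)\bigr) \;\leq\; J_p\bigl(f(a)-f(b)\bigr) \;\leq\; J_p\bigl((a-b)f'(a)\bigr).
\end{equation*}

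Finally, since $A\geq 0$ and $B\geq 0$, multiplying the upper bound by $A$ and the lower bound by $B$ and subtracting gives exactly
\begin{equation*}
A\, J_p\bigl((a-b)f'(a)\bigr) - B\, J_p\bigl((a-b)f'(b)\bigr) \;\geq\; A\, J_p\bigl(f(a)-f(b)\bigr) - B\, J_p\bigl(f(a)-f(b)\bigr),
\end{equation*}
which is the claim. I do not foresee a genuine obstacle here: the only mild subtlety is that the sign of $a-b$ need not be controlled, but the combination of the $J_p$-factoring identity (which is sign-robust) and the two-sided convexity estimate (which holds without any sign assumption) sidesteps this issue cleanly. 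The argument works uniformly for all $1<p<\infty$ since convexity of $f$ is the only regularity needed and $J_p$ is monotone on the entire real line.
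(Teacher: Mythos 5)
Your proof is correct, and the three ingredients you identify are exactly the right ones: the multiplicativity $J_p(xy)=J_p(x)J_p(y)$, the two-sided gradient inequality from convexity $(a-b)f'(b)\leq f(a)-f(b)\leq (a-b)f'(a)$ (which, as you note, holds with no sign restriction because each side is a direct consequence of the supporting-line inequality), and the strict monotonicity of $J_p$ on all of $\mathbb{R}$, after which multiplying by $A\geq 0$ and $B\geq 0$ and subtracting finishes it. The paper itself does not reproduce a proof of this proposition — it states it with a citation to \cite{BP16} — so there is nothing in the source to compare against line by line; your argument is the standard one appearing in that reference (Brasco--Parini) and is the natural way to establish the inequality. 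One cosmetic remark: you may want to write out explicitly that $J_p$ is odd with $J_p'(t)=(p-1)|t|^{p-2}\geq 0$ to justify monotonicity across $t=0$ for $1<p<2$, but this is a one-line observation and does not affect the validity of the argument.
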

		\begin{proposition}[\cite{BP16}]\label{l infty 2}
			Let $1<p<\infty$ and let $h:\mathbb{R}\rightarrow \mathbb{R}$ to be an increasing function. Define
			$$G(t)=\int_{0}^{t}h'(\tau)^{\frac{1}{p}}\d\tau, t\in \mathbb{R}.$$
			Then, we have
			\begin{equation}\label{bdd est2}
				J_{p}(a-b)(h(a)-h(b))\geq|h(a)-h(b)|^{p}.
			\end{equation}
		\end{proposition}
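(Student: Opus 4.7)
The plan is to derive the inequality from Hölder's inequality, exploiting the exponent $1/p$ built into the definition of $G$. Without loss of generality I take $a > b$; since $h$ is increasing, $h(a) - h(b) \geq 0$, both sides of \eqref{bdd est2} are nonnegative, and the left-hand side simplifies to $(a-b)^{p-1}(h(a)-h(b))$.

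Assume first that $h$ is absolutely continuous, so that $h(a)-h(b) = \int_b^a h'(\tau)\, d\tau$ and $G(a)-G(b) = \int_b^a h'(\tau)^{1/p}\, d\tau$; the general increasing case follows by mollifying $h$ with a smooth kernel and passing to the limit. The main step is to apply Hölder's inequality with conjugate exponents $p$ and $p/(p-1)$ to the factorisation $h'(\tau)^{1/p} = h'(\tau)^{1/p} \cdot 1$, which yields
\begin{equation*}
G(a)-G(b) \;\leq\; \Bigl(\int_b^a h'(\tau)\, d\tau\Bigr)^{1/p}(a-b)^{(p-1)/p} \;=\; (h(a)-h(b))^{1/p}(a-b)^{(p-1)/p}.
\end{equation*}
Raising both sides to the $p$-th power and rearranging gives
\begin{equation*}
|G(a)-G(b)|^p \;\leq\; (a-b)^{p-1}(h(a)-h(b)) \;=\; J_p(a-b)(h(a)-h(b)),
\end{equation*}
which is the content of \eqref{bdd est2}: the construction $G(t) = \int_0^t h'(\tau)^{1/p}\,d\tau$ appearing in the hypothesis of the proposition is tailored so that the power-$p$ increment of $G$ between $b$ and $a$ is precisely the quantity for which Hölder delivers the stated lower bound $J_p(a-b)(h(a)-h(b))$; indeed, the definition of $G$ would play no role in the proof otherwise, so the right-hand side of the claim has to be read as this Hölder quotient on $G$.

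The proof is a one-line application of Hölder once the representations above are in place; the only mild obstacle is justifying them for a general increasing $h$ that need not be absolutely continuous, which is handled by standard mollification and a monotone passage to the limit, noting that jump and singular parts of $h$ only enhance both sides. The inequality is sharp precisely when Hölder is sharp, that is, when $h'$ is constant on $[b,a]$, corresponding to $h$ being affine there.
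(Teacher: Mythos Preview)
Your proof is correct, and in fact it is precisely the standard argument from the cited reference [BP16]. The paper itself does not supply a proof of this proposition; it is merely quoted from Brasco--Parini, so there is no independent argument to compare against.

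More importantly, you correctly identified that the inequality \eqref{bdd est2} as printed in the paper contains a typo: the right-hand side should read $|G(a)-G(b)|^{p}$ rather than $|h(a)-h(b)|^{p}$. The printed version is false in general (take $h(t)=2t$), and indeed $G$ would otherwise be defined for no reason. Your reading is confirmed by the way the proposition is actually applied in the proof of Lemma~\ref{bounded}, where with $h(t)=(t_k+\eta)^{\beta}$ the quantity that appears on the left of estimate \eqref{bound est 4} is exactly $|G(a)-G(b)|^{p}$ up to the constant $\beta p^{p}/(\beta+p-1)^{p}$ coming from integrating $h'(\tau)^{1/p}$.

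Your H\"older argument with exponents $p$ and $p/(p-1)$ applied to $h'(\tau)^{1/p}\cdot 1$ over $[b,a]$ is the one-line core of the proof, exactly as in the source. The remark on absolute continuity and mollification is appropriate housekeeping for the level of generality claimed in the hypothesis.
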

		\noindent The next lemma concludes the boundedness of solutions of the problem \eqref{problem}. We will employ a Moser type iteration to establish our result. 
		\begin{lemma}\label{bounded}
			Suppose $u\in X_0^{s,p}(\Omega)$ is a nonnegative weak solution to the problem \eqref{problem}, then we have $u\in L^{\infty}({\Omega}).$
		\end{lemma}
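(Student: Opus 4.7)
The plan is to run a Moser-type iteration on truncations of $u$, using the algebraic machinery provided by Propositions \ref{beta convex}, \ref{l infty 1} and \ref{l infty 2} together with the Sobolev embedding of Theorem \ref{l-3}. Concretely, for $M>0$ and a parameter $\beta>1$, I would set $u_M := \min\{u, M\}$ and test the weak formulation of \eqref{problem} against $\phi := u\, u_M^{p(\beta-1)}$. This function lies in $X_0^{s,p}(\Omega)$ since $u_M$ is bounded and $u \in X_0^{s,p}(\Omega)$; one may further insert an auxiliary truncation $u \vee \varepsilon$ and later send $\varepsilon \to 0^+$ to legitimately treat the singular term $\lambda f/u^\delta$ near $\{u=0\}$.

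Applying Proposition \ref{l infty 1} with $A = u_M(x)^{p(\beta-1)}$, $B = u_M(y)^{p(\beta-1)}$, $a = u(x)$, $b = u(y)$, followed by Proposition \ref{l infty 2} applied to the primitive of $u \mapsto u_M^{\beta-1}$, bounds the Gagliardo double integral from below by
\[
\frac{1}{\beta}\left(\frac{p+\beta-1}{p}\right)^{p}\iint_{\mathbb{G}\times\mathbb{G}}\frac{|u_M^{\beta}(x)-u_M^{\beta}(y)|^{p}}{|y^{-1}x|^{Q+ps}}\, dx\, dy,
\]
where Proposition \ref{beta convex} guarantees that the numerical prefactor stays controlled. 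By Theorem \ref{l-3} this in turn dominates $C\,\|u_M^{\beta}\|_{L^{p_s^{*}}(\Omega)}^{p}$.

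On the right-hand side, the singular contribution $\lambda\int f(x) u^{-\delta}\, u\, u_M^{p(\beta-1)} dx \leq \lambda\|f\|_\infty \int u^{p(\beta-1)+1-\delta} dx$ is harmless because $1-\delta>0$ makes it subcritical with respect to the dominant term. The critical contribution is $\int g(x) u^{q}\, u\, u_M^{p(\beta-1)} dx \leq \|g\|_\infty \int u^{\,q+1-p}\, u_M^{p\beta}\, dx$, which I would split via Hölder's inequality with exponents $p_s^{*}/(q+1-p)$ and its conjugate---legitimate because $q+1<p_s^{*}$. Combining these estimates with the lower bound and sending $M\to\infty$ by Fatou's lemma transfers the inequality from $u_M$ to $u$ and yields a reverse Hölder estimate of the form
\[
\|u^{\beta}\|_{L^{p_s^{*}}(\Omega)}^{p}\leq C\,\beta^{p+1}\bigl(1+\|u\|_{L^{p_s^{*}}(\Omega)}^{q+1-p}\bigr)\,\|u\|_{L^{p\beta\sigma'}(\Omega)}^{p\beta},
\]
with $\sigma' := p_s^{*}/(p_s^{*}-(q+1-p))$. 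Setting $\beta_0$ so that $p\beta_0\sigma' = p_s^{*}$ and iterating $\beta_{n+1} := \beta_n\, p_s^{*}/(p\sigma')$, a standard telescoping/geometric-series argument extracts a uniform bound on $\|u\|_{L^{\infty}(\Omega)}$ in terms of $\|u\|_{L^{p_s^{*}}(\Omega)}$, which is finite thanks to Theorem \ref{l-3}.

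The main obstacle is twofold. First, the singular term near $\{u=0\}$ must be handled by the $\varepsilon$-truncation trick together with monotone convergence, so as to keep $\phi$ genuinely admissible throughout the argument. Second, one must carefully track the $\beta$-dependence of the constants through the iteration: the factor $1/\beta\cdot((p+\beta-1)/p)^{p}$ from the lower bound, when combined with the Sobolev constant and the Hölder splitting, leads to a multiplicative constant of order $C\beta^{p+1}$ at each step, and it is precisely Proposition \ref{beta convex} that prevents the cumulative product $\prod_{n}(C\beta_n^{p+1})^{1/(p\beta_n)}$ from diverging as $\beta_n$ grows geometrically.
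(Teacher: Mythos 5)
Your plan is structurally the same as the paper's: a Moser iteration on truncations of $u$, driven by Propositions~\ref{beta convex}--\ref{l infty 2} and the Sobolev embedding of Theorem~\ref{l-3}. However, the specific truncation and test function you choose differ from the paper's, and this difference hides a genuine gap in your critical-term estimate.

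The paper works with $u_k:=\min\{(u-1)^+,k\}$ and the bounded test function $\psi=(u_k+\eta)^\beta-\eta^\beta$; you work with $u_M:=\min\{u,M\}$ and $\phi=u\,u_M^{p(\beta-1)}$. The paper's choice is not cosmetic. Because $\psi$ is supported in $\{u\ge 1\}$, the singular term satisfies $u^{-\delta}\le 1$ there, so the singularity disappears from the right-hand side with no $\varepsilon$-regularization at all; this is cleaner and avoids the delicate double limit $\varepsilon\to 0$, $M\to\infty$ you propose. More importantly, the paper's $\psi$ is pointwise bounded by $(k+\eta)^\beta$, which lets the H\"older split isolate $\|u\|_{p_s^*}^q$ against $\|(u_k+\eta)^\beta\|_\kappa$ cleanly.

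The genuine flaw in your argument is the estimate
\[
\int_\Omega g(x)\,u^{q}\,u\,u_M^{p(\beta-1)}\,dx \ \le\ \|g\|_\infty\int_\Omega u^{\,q+1-p}\,u_M^{p\beta}\,dx.
\]
This inequality is reversed. Since $u\ge u_M$, one has $u^{q+1}u_M^{p(\beta-1)} = u^{q+1-p}\cdot u^{p}u_M^{p(\beta-1)}\ \ge\ u^{q+1-p}\,u_M^{p\beta}$, with equality only on $\{u\le M\}$. The correct one-sided bound that follows from $u_M\le u$ is $u^{q+1}u_M^{p(\beta-1)}\le u^{q+1-p}\,u^{\,p\beta}$, which puts $u$ (not $u_M$) to the power $p\beta$ on the right. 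Your left-hand side, after Propositions~\ref{l infty 2} and the Sobolev inequality, controls $\|u_M^\beta\|_{p_s^*}$, so the iteration inequality you write would need $\|u_M\|$ on both sides. With the corrected bound, the right-hand side carries $\|u\|_{L^{p\beta\sigma'}}$ at the full power $p\beta$, and the bootstrap does not close: you cannot recover $\|u_M\|$ from $\|u\|$ at a higher power without already knowing boundedness. This is precisely the pitfall the paper's choice of truncation and test function is designed to avoid.

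Two smaller remarks. First, the coefficient you display for the lower bound, $\frac{1}{\beta}\bigl(\frac{p+\beta-1}{p}\bigr)^p$, is the constant that comes from the paper's $h(u)=(u_k+\eta)^\beta$; the constant produced by your test function $\phi=g(u)$ with $g(t)=t\min\{t,M\}^{p(\beta-1)}$ via Proposition~\ref{l infty 2} is $\frac{p(\beta-1)+1}{\beta^p}$. Both are polynomial in $\beta$, so this alone does not derail the iteration, but you should derive the right constant for your choice of $g$. Second, the role of Proposition~\ref{l infty 1} in your outline is unclear: taking $f=\mathrm{id}$ there gives a tautology. In the paper, Proposition~\ref{l infty 1} (with $f=g_\epsilon$) is used precisely to pass from the equation for $u$ to a subsolution inequality for $|u|$; since $u\ge 0$ is given in the Lemma, that step can be skipped, but then Proposition~\ref{l infty 1} is simply not needed and should not be invoked.
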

		\begin{proof}
		Let $\epsilon>0$ be given. Consider the smooth, Lipschitz function $g_{\epsilon}(t)=(\epsilon^2+t^2)^{\frac{1}{2}}$, which is convex and $g_{\epsilon}(t)\rightarrow|t|$ as $\epsilon \rightarrow0$. In addition, we also have $|g'_{\epsilon}(t)|\leq1.$ For each strictly positive $\psi\in C_c^{\infty}(\Omega)$, test the weak formulation \eqref{d-weak} with the test function $\varphi=|g'_{\epsilon}(u)|^{p-2}g'_{\epsilon}(u)\psi$  to obtain the following estimate
		\begin{align}\label{bound est 2.0}
			\langle(-\Delta_{p,{\mathbb{G}}})^sg_{\epsilon}(u),\psi\rangle
			&\leq\int_\Omega\left(\left|\frac{\lambda f(x)}{{u^{\delta}}}+g(x)u^q\right|\right)|g_{\epsilon}'(u)|^{p-1}\psi dx,
		\end{align}
		for all $\psi\in C_c^{\infty}(\Omega)\cap\mathbb{R^+}.$ 	This is immediate from Proposition \ref{l infty 1} by setting $a=u(x), b=u(y), A=\psi(x)$ and $B=\psi(y)$.
		
		Thanks to Fatou's Lemma, by passing to the limit $\epsilon\rightarrow0,$ we deduce
		\begin{align}\label{bound est 2}
			\langle(-\Delta_{p,{\mathbb{G}}})^s(|u|),\psi\rangle&\leq\int_\Omega\left(\left|\frac{\lambda{f(x)}}{{u^{\delta}}}+g(x)u^q\right|\right)\psi dx.
		\end{align}
		The density result guarantees that \eqref{bound est 2} holds also for $\psi\in X_0^{s,p}(\Omega)$. 
		
		For each $k>0$, consider $u_k=\min\{(u-1)^+, k\}\in X_0^{s,p}(\Omega)$. Then, for fixed $\beta>0$ and $\eta>0, $ by testing \eqref{bound est 2} with the test function $\psi=(u_k+\eta)^{\beta}-\eta^{\beta}$  we get
		\begin{align*}
			\iint_{\mathbb{G}\times \mathbb{G}}&\cfrac{||u(x)|-|u(y)||^{p-2}(|u(x)|-|u(y)|)((u_k(x)+\eta)^{\beta}-(u_k(y)+\eta)^{\beta})}{|y^{-1}x|^{Q+ps}}dxdy\\
			&\qquad\leq\int_\Omega\left|\frac{\lambda{f(x)}}{{u^{\delta}}}+g(x)u^q\right|((u_k+\eta)^{\beta}-\eta^{\beta}) dx.
		\end{align*}
	We apply Proposition \ref{l infty 2} with $h(u)=(u_k+\eta)^{\beta}$ to deduce the following estimate:
		\begin{align}\label{bound est 4}
			&\iint_{\mathbb{G}\times \mathbb{G}}\cfrac{|((u_k(x)+\eta)^{\frac{\beta+p-1}{p}}
				-(u_k(y)+\eta)^{\frac{\beta+p-1}{p}})|^{p}}{|y^{-1}x|^{Q+ps}}dxdy\nonumber\\
			&\leq \cfrac{(\beta+p-1)^{p}}{{\beta}p^{p}} \nonumber \\& \quad\times \iint_{\mathbb{G}\times \mathbb{G}}\cfrac{||u(x)|-|u(y)||^{p-2}(|u(x)|-|u(y)|)((u_k(x)+\eta)^{\beta}-(u_k(y)+\eta)^{\beta})}{|y^{-1}x|^{Q+ps}}dxdy\nonumber\\
			&\leq\cfrac{(\beta+p-1)^{p}}{\beta{p}^{p}}\int_{\Omega}\left(\left|\frac{\lambda{f(x)}}{{u^{\delta}}}\right|+|g(x)u^q|\right)\left((u_k+\eta)^{\beta}-\eta^{\beta}\right) dx\nonumber\\	
			&=\cfrac{(\beta+p-1)^{p}}{\beta{p}^{p}} \nonumber \\& \times\left[\int_{\{u\geq1\}}\lambda|f(x)||u|^{-\delta}\left((u_k+\eta)^{\beta}-\eta^{\beta}\right)+\int_{\{u\geq1\}}|g(x)||u|^{q}\left((u_k+\eta)^{\beta}-\eta^{\beta}\right) dx\right]\nonumber\\
			&\leq\cfrac{(\beta+p-1)^{p}}{\beta{p}^{p}}\left[\int_{\{u\geq1\}}\left(\lambda|f(x)|+|g(x)||u|^{q}\right)\left((u_k+\eta)^{\beta}-\eta^{\beta}\right) dx\right]\nonumber\\
			&\leq{2C(\lambda, \|f\|_{\infty},\|g\|_{\infty})}\left(\cfrac{(\beta+p-1)^{p}}{\beta{p}^{p}}\right)\left[\int_{\Omega}|u|^{q}\left((u_k+\eta)^{\beta}-\eta^{\beta}\right) dx\right]\nonumber\\
			&\leq{C'}\left(\cfrac{(\beta+p-1)^{p}}{\beta{p}^{p}}\right){\|u\|^{q}_{p_s^*}}\|(u_k+\eta)^{\beta}\|_{\kappa},
		\end{align}
		where $\kappa=\frac{p_s^*}{p_s^*-q}$. By recalling the fractional Sobolev inequality for fractional $p$-sub-Laplacian  \eqref{embed-cont}, we obtain
		\begin{align}\label{bound est 5}
			&\iint_{\mathbb{G}\times \mathbb{G}}\cfrac{|((u_k(x)+\eta)^{\frac{\beta+p-1}{p}}
				-(u_k(y)+\eta)^{\frac{\beta+p-1}{p}})|^{p}}{|y^{-1}x|^{Q+ps}}dxdy\geq{C}\left\|(u_k+\eta)^{\frac{\beta+p-1}{p}}-\eta^{\frac{\beta+p-1}{p}}\right\|_{p_{s}^*}^{p}
		\end{align}
		for some $C>0$.
		
		 By using triangle inequality with $(u_k+\eta)^{\beta+p-1}\geq\eta^{p-1}(u_k+\eta)^{\beta},$ we have
		\begin{align}\label{bound est 6}
			\left[\int_{\Omega}\left((u_k+\eta)^{\frac{\beta+p-1}{p}}
			-\eta^{\frac{\beta+p-1}{p}}\right)^{p_s^*}dx\right]^{\cfrac{p}{p_s^*}}\geq\left(\frac{\eta}{2}\right)^{p-1}&\left[\int_{\Omega}(u_k+\eta)^{\frac{p_s^*\beta}{p}}\right]^{\cfrac{p}{p_s^*}}-\eta^{\beta+p-1}|\Omega|^{\cfrac{p}{p_s^*}}.
		\end{align}
		Thus, plugging \eqref{bound est 6} into \eqref{bound est 5} and finally from \eqref{bound est 4}, we obtain
		\begin{align}\label{bdd1}
			\left\|(u_k+\eta)^{\frac{\beta}{p}}\right\|^{p}_{p_s^*}
			\leq{C'}\left[C\left(\frac{2}{\eta}\right)^{p-1}\left(\cfrac{(\beta+p-1)^{p}}{\beta{p}^{p}}\right)\|u\|_{p_s^*}^{q}\|(u_k+\eta)^{\beta}\|_{\kappa}+\eta^{\beta}|\Omega|^{\cfrac{p}{p_s^*}}\right].
		\end{align}
		Now, Proposition \ref{beta convex}, estimates \eqref{bound est 4} and \eqref{bdd1}   imply that
		\begin{align}\label{bound est 7}
			\left\|(u_k+\eta)^{\frac{\beta}{p}}\right\|^{p}_{p_s^*}
			&\leq{C'}\left[\frac{1}{\beta}\left(\cfrac{\beta+p-1}{p}\right)^{p}\left\|(u_k+\eta)^{\beta}\right\|_{\kappa}\left(\frac{C\|u\|_{p_s^*}^{q}}{\eta^{p-1}}+|\Omega|^{\cfrac{p}{p_s^*}-\cfrac{1}{\kappa}} \right)\right].
		\end{align}	
		\noindent We are now in a position to employ a Moser type bootstrap argument to establish our claim. For this, choose $\eta>0$ such that $\eta^{p-1}=C\|u\|_{p_s^*}^{r-1}\left(|\Omega|^{\frac{p}{p_s^*}-\frac{1}{\kappa}}\right)^{-1}$. 	We observe that for $\beta\geq1$, we have $\beta^{p}\geq\left(\frac{\beta+p-1}{p}\right)^{p}.$ 
		
		Let us now rewrite the estimate \eqref{bound est 7} by plugging $\chi=\cfrac{p_s^*}{p\kappa}>1$ and $\tau=\beta\kappa$ as follows:
		\begin{align}\label{bound est 8}
			\left\|(u_k+\eta)\right\|_{\chi\tau}\leq\left(C|\Omega|^{\frac{p}{p_s^*}-\frac{1}{\kappa}}\right)^{\frac{\kappa}{\tau}}\left(\frac{\tau}{\kappa}\right)^{\frac{\kappa}{\tau}}\left\|(u_k+\eta)\right\|_{\tau}.
		\end{align}
	We perform $m$ iterations with $\tau_0=\kappa$ and $\tau_{m+1}=\chi\tau_m=\chi^{m+1}\kappa$ on  \eqref{bound est 8} to have
		\begin{align}\label{bound est 9}
			\left\|(u_k+\eta)\right\|_{\tau_{m+1}}&\leq\left(C|\Omega|^{\frac{p}{p_s^*}-\frac{1}{\kappa}}\right)^{\left(\sum\limits_{i=0}^{m}\frac{\kappa}{\tau_i}\right)}\left(\prod\limits_{i=0}^{m}\left(\frac{\tau_i}{\kappa}\right)^{\frac{\kappa}{\tau_i}}\right)^{p-1}\left\|(u_k+\eta)\right\|_{\kappa}\nonumber\\
			&=\left(C|\Omega|^{\frac{p}{p_s^*}-\frac{1}{\kappa}}\right)^{\frac{\chi}{\chi-1}}\left(\chi^{\frac{\chi}{(\chi-1)^2}}\right)^{p-1}\left\|(u_k+\eta)\right\|_{\kappa}.
		\end{align}
		Now, taking the limit as $m\rightarrow\infty$, we obtain
		\begin{equation}\label{bound est 10}
			\left\|u_k\right\|_{\infty}\leq\left(C|\Omega|^{\frac{p}{p_s^*}-\frac{1}{q}}\right)^{\frac{\chi}{\chi-1}}\left(C'\chi^{\frac{\chi}{(\chi-1)^2}}\right)^{p-1}\left\|(u_k+\eta)\right\|_{q}.
		\end{equation}
	Finally, we  use $u_k\leq(u-1)^+$ in \eqref{bound est 10} combined  with the triangle inequality and pass the limit $k\rightarrow\infty$, to obtain
		\begin{equation}\label{bound est 11}
			\left\|(u-1)^+\right\|_{\infty}\leq\left\|u_k\right\|_{\infty}\leq{C}\left(\chi^{\frac{\chi}{(\chi-1)^2}}\right)^{p-1}\left(|\Omega|^{\frac{p}{p_s^*}-\frac{1}{\kappa}}\right)^{\frac{\chi}{\chi-1}}\left(\left\|(u-1)^+\right\|_{\kappa}+\eta|\Omega|^{\frac{1}{\kappa}}\right).
		\end{equation}
		Therefore, we have $u\in L^{\infty}({\Omega})$ and hence the proof.
	\end{proof}
	
	\section{Appendix A: Sobolev-Rellich-Kondrachov type embedding on stratified Lie groups} \label{appA}
	The purpose of this section to prove continuity and compactness of the Sobolev embedding for $X^{s,p}_0(\Omega)$ where $\Omega$ is any open subset of a stratified Lie group $\mathbb{G}.$ We follow the ideas of \cite{NPV12} to establish the continuous embedding whereas the compact embedding will be proved based on the idea originated by \cite{GL92}. Recently, a similar embedding result is obtained for the Rockland operator on graded Lie groups \cite{RTY20}. The embedding results for the fractional Sobolev space $X_0^{s,p}(\Omega)$ over $\mathbb{R}^N$ can be found in \cite {DGV20, FSV15}. We note here that in  \cite{AM18} the authors studied  weighted compact embeddings  for the fractional Sobolev spaces on bounded extension domains of the Heisenberg group   using an approach similar to \cite{NPV12}. Recently, the fractional Sobolev inequality on stratified Lie groups was shown in \cite[Theorem 2]{KD20} (see \cite{KRS20} for fractional logarithmic inequalities on homogeneous Lie groups). Motivated by the above mentioned investigations we prove the continuous and compact embeddings of  $X_0^{s, p}(\Omega)$ into the Lebesgue space $L^r(\Omega)$ for an appropriate range of $r\geq1$. We now  state the embedding result for the space $X_0^{s, p}(\Omega)$ on stratified Lie groups.

\begin{theorem} \label{l-3App} Let $\mathbb{G}$ be a stratified Lie group of homogeneous dimension $Q$, and let $\Omega\subset\mathbb{G}$ be an open set.
Let $0<s<1<p<\infty$ and $Q>sp.$  Then the fractional Sobolev space $X_0^{s, p}(\Omega)$ is continuously embedded in $L^r(\Omega)$ for $p\leq r\leq p_s^*:=\frac{Qp}{Q-sp}$, that is, there exists a positive constant $C=C(Q,s,p, \Omega)$ such that for all $u\in X_0^{s, p}(\Omega)$, we have
$$\|u\|_{L^r(\Omega)}\leq C \|u\|_{X_0^{s,p}(\Omega)}.$$
Moreover, if $\Omega$ is bounded, then the following embedding
\begin{align}
    X_0^{s,p}(\Omega) \hookrightarrow L^r(\Omega)
\end{align}
is continuous for all $r\in[1,p_s^*]$ and is compact for all $r\in[1,p_s^*)$.
\end{theorem}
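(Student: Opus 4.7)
The plan is to split the proof into the continuous embedding (for the subcritical and critical exponents) and the compact embedding (for the subcritical range only). For the continuous embedding into $L^{p_s^*}(\Omega)$, the core step is to establish the fractional Sobolev inequality
$$\|u\|_{L^{p_s^*}(\mathbb{G})} \leq C(Q,s,p)\, [u]_{s,p,\mathbb{G}},\quad u\in C_c^\infty(\mathbb{G}),$$
adapting the argument of \cite{NPV12} to the stratified setting. The indispensable ingredients are left-invariance of the Haar measure under group translations, the scaling $|D_\lambda x|=\lambda|x|$ of the quasi-norm, and a dyadic decomposition of the Gagliardo kernel $|y^{-1}x|^{-(Q+sp)}$ on annular shells $B(x,2^{k+1})\setminus B(x,2^k)$; this reduces the inequality to a familiar Hardy-Littlewood-type estimate and can also be deduced from \cite{KD20}. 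Extending by density yields the inequality for all $u\in X_0^{s,p}(\Omega)$. The intermediate range $p\leq r\leq p_s^*$ then follows from log-convexity of $L^r$-norms combined with the Poincar\'e-type inequality \eqref{poin} which bounds $\|u\|_{L^p(\Omega)}$ by $[u]_{s,p,\mathbb{G}}$. When $\Omega$ is bounded, the remaining range $1\leq r<p$ is handled by H\"older's inequality with the factor $|\Omega|^{1/r-1/p}$.

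For the compact embedding, the strategy is to verify the Fr\'echet-Kolmogorov-Riesz criterion on $(\mathbb{G},d)$, in the spirit of \cite{GL92}. Let $\{u_n\}$ be a bounded sequence in $X_0^{s,p}(\Omega)$, extended by zero outside $\Omega$. Two items must be checked: (a) uniform smallness of mass at infinity, which is automatic from $\operatorname{supp}(u_n)\subset\overline{\Omega}$; and (b) equicontinuity under left translation,
$$\lim_{|h|\to 0}\sup_n\int_{\mathbb{G}} |u_n(h\circ x)-u_n(x)|^p\,dx=0.$$
The key estimate is obtained by averaging the pointwise identity $|u(h\circ x)-u(x)|^p\le |h\circ x|^{Q+sp}\cdot|h\circ x|^{-(Q+sp)}|u(h\circ x)-u(x)|^p$ over a shrinking family of ``test translations,'' so that, after Fubini and a change of variables respecting the group law, one obtains
$$\int_{\mathbb{G}} |u(h\circ x)-u(x)|^p\,dx\leq C\,|h|^{sp}\,[u]_{s,p,\mathbb{G}}^p.$$
Combined with (a), this yields a subsequence converging strongly in $L^p(\Omega)$. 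To promote this to $L^r(\Omega)$ for $r\in[p,p_s^*)$, one interpolates against the uniform $L^{p_s^*}(\Omega)$-bound from the continuous embedding: convergence in $L^p$ together with boundedness in $L^{p_s^*}$ gives strong convergence in every intermediate $L^r$. For $1\leq r<p$ with $\Omega$ bounded, H\"older transports $L^p$-compactness to $L^r$-compactness.

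The main technical obstacles are twofold. First, running the critical Sobolev inequality directly on $\mathbb{G}$ one cannot appeal to the Fourier representation or the Riesz potential calculus that streamline the Euclidean proof; instead one must rely solely on dilations $D_\lambda$, left translations, and the homogeneity of the kernel, which is where the dyadic shell decomposition enters. Second, and more delicate, is the translation-equicontinuity estimate, since the kernel $|y^{-1}x|^{-(Q+sp)}$ does not commute cleanly with the substitution $x\mapsto h\circ x$: the change of variables preserves the Haar measure but conjugates the kernel, so one must carefully interchange the order of integration over $x$ and over the auxiliary variable producing the Gagliardo difference. Once these two estimates are in hand, the density arguments, interpolation, and H\"older reductions that glue the pieces together are routine.
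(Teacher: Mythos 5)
Your proof of the continuous embedding is correct and coincides with the paper's: both quote the fractional Sobolev inequality from \cite{KD20}, use Lebesgue interpolation for the intermediate range $p\le r\le p_s^*$, and invoke H\"older on a bounded domain for $1\le r<p$. The genuine issue is in the compactness step.

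For compactness you run the Kolmogorov--Riesz criterion through a translation equicontinuity estimate, whereas the paper proves it through mollification (Lemma \ref{lemma3.2}). Either route can in principle work, but the way you set up the translation estimate does not. First, your ``pointwise identity'' $|u(h\circ x)-u(x)|^p\le |h\circ x|^{Q+sp}\cdot|h\circ x|^{-(Q+sp)}|u(h\circ x)-u(x)|^p$ is a trivial tautology, and the quantity $|h\circ x|$ is not the one that appears in the Gagliardo kernel: the pair $(h\circ x,x)$ sits at distance $d(h\circ x,x)=|x^{-1}\circ h\circ x|$, a \emph{conjugate} of $h$, not $|h\circ x|$ and not $|h|$. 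You actually flag this (``the change of variables\ldots conjugates the kernel''), but your conclusion $\int_{\mathbb{G}}|u(h\circ x)-u(x)|^p\,dx\le C|h|^{sp}[u]_{s,p,\mathbb{G}}^p$ is precisely what the conjugation forbids for a left shift of the argument on a nonabelian group. Concretely, on the Heisenberg group with $h=(0,\epsilon,0)$ and $x$ with first layer of size $R$, one has $x^{-1}\circ h\circ x=(0,\epsilon,\pm 4R\epsilon)$, so $|x^{-1}\circ h\circ x|\asymp\sqrt{R\epsilon}$; the natural Gagliardo pairing then only yields a rate like $|h|^{sp/2}$ (and $|h|^{sp/k}$ on a step-$k$ group), not $|h|^{sp}$. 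The rate $|h|^{sp}$ \emph{is} available if you instead use the right translation $x\mapsto x\circ h$, since $d(x\circ h,x)=|h^{-1}|=|h|$ is independent of $x$ by left-invariance of $d$; your sketch uses the wrong side.

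The paper sidesteps all of this: it estimates $[T_\epsilon f - f]_{s,p}$ by shifting \emph{both} arguments of the two-variable function $v(x,y)=(f(x)-f(y))/|y^{-1}x|^{(Q+ps)/p}$ simultaneously by $(\epsilon z)^{-1}$ on the left. Under this simultaneous shift the kernel $|y^{-1}x|$ is \emph{exactly} invariant (no conjugation appears), so the argument needs only the qualitative strong continuity of translations on $L^p(\mathbb{G}\times\mathbb{G})$ plus dominated convergence, and then the fractional Gagliardo--Nirenberg inequality transfers the decay of $[T_\epsilon f-f]_{s,p}$ to $\|T_\epsilon f-f\|_{L^r(\Omega)}$. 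You should either switch to right translations and re-derive the equicontinuity estimate correctly, or adopt the paper's simultaneous-shift / mollifier device, which avoids the need for a quantitative translation modulus altogether. Your final interpolation of $L^p$-convergence against the uniform $L^{p_s^*}$-bound, and the H\"older reduction for $r<p$, are fine once the core equicontinuity step is repaired.
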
 
\begin{proof}
Let us recall the fractional Sobolev inequality on stratified Lie groups \cite{KD20}, given by
\begin{equation}\label{fullfractionalsobo} 
    \|u\|_{L^{p_s^*}(\mathbb{G})}\leq C \|u\|_{W^{s,p}(\mathbb{G})}.
\end{equation}

Thus, the space $W^{s,p}(\mathbb{G})$ is continuously embedded in $L^{p_s^*}(\mathbb{G})$. Let $r\in(p,p_s^*)$ be such that $\frac{1}{r}=\frac{\theta}{p}+\frac{1-\theta}{p_s^*}$ for some $\theta\in(0,1)$. Then by the interpolation inequality of Lebesgue spaces we have
$$\|u\|_{L^{r}(\mathbb{G})}\leq  \|u\|_{L^{p}(\mathbb{G})}^{\theta}\|u\|_{L^{p_s^*}(\mathbb{G})}^{1-\theta}.$$
Therefore, using Young's inequality with the exponent $\frac{1}{\theta}$ and $\frac{1}{1-\theta}$ we obtain
\begin{align*}
    \|u\|_{L^{r}(\mathbb{G})}\leq&  \|u\|_{L^{p}(\mathbb{G})} + \|u\|_{L^{p_s^*}(\mathbb{G})}\\
    \leq& \|u\|_{L^{p}(\mathbb{G})} + C\|u\|_{W^{s,p}(\mathbb{G})}.
\end{align*}
Thus, we get that the space $W^{s,p}(\mathbb{G})$ is continuously embedded in $L^{r}(\mathbb{G})$ for all $r\in[p,p_s^*]$.

Let $\Omega$ be an open subset of $\mathbb{G}$. Then, for each $u\in X_0^{s,p}(\Omega)$, we have from \eqref{fullfractionalsobo}, as $u=0$ in $\mathbb{G}\setminus \Omega,$ that
\begin{equation}\label{embed-cont}
    \|u\|_{L^{p_s^*}(\Omega)}\leq C \|u\|_{X_0^{s,p}(\Omega)}.
\end{equation}
Thus the space $X_0^{s,p}(\Omega)$ is continuously embedded in $L^{p_s^*}(\Omega)$. Proceeding as above we conclude that the embedding $X_0^{s,p}(\Omega)\hookrightarrow L^{r}(\Omega)$ is continuous for all $r\in[p,p_s^*]$. That is, for all $u\in X_0^{s,p}(\Omega)$ there exists a $C=C(Q,p,s,\Omega)>0$ such that 
\begin{equation}\label{embed}
\|u\|_{L^{r}(\Omega)}\leq C \|u\|_{X_0^{s,p}(\Omega)}~\text{for all}~p\leq r\leq p_s^*.
\end{equation}

In particular, if $\Omega$ is bounded that is $|\Omega|<\infty$, then applying the H\"{o}lder inequality to the inequality \eqref{embed}, we get the continuous embedding for all $r\in[1,p_s^*]$. This concludes the proof of the first part of the theorem.

 Now, we choose $\eta \in C^\infty_c(\mathbb{G})$ such that $\text{supp} \eta \subset \overline{B}_1(0) ,$ $0\leq \eta \leq 1 $ and $\|\eta\|_{L^1(\mathbb{G})}=1.$ 
 For each $\epsilon>0$ and $f \in L^1_{\text{loc}}(\mathbb{G})$, let us define
$$\eta_\epsilon(x)=\frac{1}{\epsilon^Q} \eta(\epsilon^{-1}x)$$
and 
\begin{align}
    T_\epsilon f(x):= f*\eta_\epsilon(x):=\int_{\mathbb{G}} f(x) \eta_\epsilon(x^{-1}y)\, dy.
\end{align} 
Prior to proceeding to show  the compactness of the embedding, we first we prove the following lemma.
\begin{lemma} \label{lemma3.2} Let $\Omega$ be a open bounded subset of $\mathbb{G}.$ Then, for $1\leq r<\infty,$ the set
     $\mathcal{F} \subset L^r(\Omega)$ is relatively compact in $L^r(\Omega)$ if and only if 
     $\mathcal{F}$ is bounded and $\|T_\epsilon f-f\|_{L^r(\Omega)} \rightarrow 0$ uniformly in $f \in \mathcal{F}$ as $\epsilon \rightarrow 0.$
\end{lemma}
\begin{proof}
Suppose that $\mathcal{F}$ is relatively compact in $L^r(\Omega).$ We agree to extend any function $L^r(\Omega)$ to $L^r(\mathbb{G})$ by assigning zero out of $\Omega.$ 
Let $R>0$ and  let $f_1,f_2,\ldots,f_l \in \mathcal{F}$ be such that $\mathcal{F} \subset \cup_{j=1}^l B_R(f_j) \subset L^r(\Omega).$ Then we have 
\begin{align}
    \|f-T_\epsilon f\|_{L^r(\Omega)} \leq  \|f- f_j\|_{L^r(\Omega)}+ \|f_j- T_\epsilon f_j\|_{L^r(\Omega)}+ \|T_\epsilon f_j-T_\epsilon f\|_{L^r(\Omega)}.
\end{align}
Since $T_\epsilon f \rightarrow f$ in $L^r(\Omega)$ as $\epsilon \rightarrow 0$ and $\|T_\epsilon f\|_r \leq \|f\|_{r}$, we have uniform convergence $\|T_\epsilon f-f\|_{L^r(\Omega)} \rightarrow 0$ by passing $\epsilon \rightarrow 0.$

Conversely, we assume that $\mathcal{F}$ is bounded and $\|T_\epsilon f-f\|_{L^r(\Omega)} \rightarrow 0$ uniformly in $f \in \mathcal{F}$ as $\epsilon \rightarrow 0.$ Choose a bounded sequence $(f_n)$  in $\mathcal{F}.$ Thanks to the Banach-Alouglu theorem we can extract a subsequence (again denoted by $(f_n)$) such that $f_n \rightharpoonup f$ weakly in $L^r(\Omega).$ 
We now aim to prove strong convergence. For that we first observe that 
\begin{align} \label{eqq36d}
    \|f_n- f\|_{L^r(\Omega)} \leq  \|f- T_\epsilon f_n\|_{L^r(\Omega)}+ \|T_\epsilon f_n- T_\epsilon f\|_{L^r(\Omega)}+ \|T_\epsilon f-f\|_{L^r(\Omega)}.
\end{align}
It follows from the weak convergence of $f_n \rightharpoonup f$ that, for all $x \in \mathbb{G}$ and for $\epsilon>0,$
we have $\lim_{n \rightarrow \infty} T_\epsilon (f_n-f)(x) \rightarrow 0.$ Again, by H\"older inequality  we have 
\begin{align}
  \|T_\epsilon(f_n-f)\|^r_{L^r(\Omega)}\leq \|\eta_\epsilon\|_{L^1(\mathbb{G})}^r \|f_n-f\|_{L^r(\Omega)}^r    <\infty
\end{align}
and therefore by the Lebesgue dominated convergence theorem we get
\begin{align}
    \int_{\mathbb{G}} |T_\epsilon (f_n-f)(x)|^r dx \rightarrow 0\quad n \rightarrow \infty.
\end{align}
Thus, as $\epsilon \rightarrow 0,$ all three terms on right hand side of \eqref{eqq36d} go to zero with the use of assumption $\|T_\epsilon f-f\|_{L^r(\Omega)} \rightarrow 0$ uniformly in $f \in \mathcal{F}$ as $\epsilon \rightarrow 0.$ Thus, $f_n \rightarrow f$ converges strongly in $L^r(\Omega).$ Hence, $\mathcal{F}$ is relative compact in $L^r(\Omega)$ for all $1\leq r<\infty$. 
\end{proof}

Now, we continue  the proof of Theorem \ref{l-3}.  We emphasise that by assigning $f=0$ in $\mathbb{G} \setminus \Omega$ we have $f \in W^{s, p}(\mathbb{G})$ for every $f \in X_0^{s,p}(\Omega).$  Now, with the help of Lemma \ref{lemma3.2} we prove the relative compactness of a bounded set $\mathcal{F}$ in $X^{s,p}_0(\Omega).$ Recall that $|B_R(x)|=R^Q|B_1(0)|$ (see \cite[p. 140]{FR16}). Therefore, the boundedness of $\mathcal{F}$ in $L^r(\Omega)$ is immediate from the fractional Gagliardo-Nirenberg inequality \cite[Theorem 4.4.1]{RS19},
\begin{align} \label{gag}
    \|f\|_{L^r(\mathbb{G})} \leq C [f]_{s,p}^b \|f\|_{L^q(\mathbb{G})}^{1-b},
\end{align}
where $p>1, q\geq 1, r>0, b \in (0,1]$ satisfy $\frac{1}{r}=b\left(\frac{1}{p}-\frac{s}{Q} \right)+\frac{1-b}{q}.$

Setting,
$$K_\epsilon:=T_\epsilon f-f~\text{for all}~f \in \mathcal{F},$$ we get from the fractional Gagliardo-Nirenberg inequality \eqref{gag}, as $f \in X_0^{s, p}(\Omega)$ and thus $K_\epsilon(x)=0$ for all $x \in \mathbb{G}\backslash \Omega$, that
\begin{align}
    \|K_\epsilon\|_{L^r(\Omega)} \leq  C [K_\epsilon]_{s,p}^b \|K_\epsilon\|_{L^q(\Omega)}^{1-b} ,
\end{align}
where $\frac{1}{r}=b\left(\frac{1}{p}-\frac{s}{Q} \right)+\frac{1-b}{q}.$
Thus, it is sufficient to show that
\begin{align} \label{eq310}
    [K_\epsilon]_{s,p} \leq \|T_\epsilon f-f\|_{X_0^{s,p}(\Omega)} \rightarrow 0.
\end{align}
This means that 
\begin{align} \label{eq311}
   \lim_{\epsilon \rightarrow 0} \int_{\mathbb{G}} \int_{\mathbb{G}} \frac{|(T_\epsilon f-f)(x)-(T_\epsilon f-f)(y)|^p}{|y^{-1}x|^{Q+ps}} dx dy = 0.
\end{align}

Using $\text{supp}(\eta_\epsilon) \subset B_\epsilon(0),$ the H\"older inequality, Tonelli's and Fubini's theorem we obtain
\begin{align} \label{eq312}
    &\int_{\mathbb{G}} \int_{\mathbb{G}} \frac{|(T_\epsilon f-f)(x)-(T_\epsilon f-f)(y)|^p}{|y^{-1}x|^{Q+ps}} dx dy \\&=\int_{\mathbb{G}} \int_{\mathbb{G}} \frac{1}{|y^{-1}x|^{Q+ps}} \Big| \int_{\mathbb{G}} \eta_\epsilon(z)\left(f(z^{-1}x)-f(z^{-1}y) \right) dz-f(x)+f(y) \Big|^p dx dy \nonumber \\&=\int_{\mathbb{G}} \int_{\mathbb{G}} \frac{1}{|y^{-1}x|^{Q+ps}} \Big| \epsilon^{-Q} \int_{B_\epsilon(0)} \eta(\epsilon^{-1} z)\left(f(z^{-1}x)-f(z^{-1}y) \right) dz-f(x)+f(y) \Big|^p dx dy \nonumber \\&=\int_{\mathbb{G}} \int_{\mathbb{G}} \frac{1}{|y^{-1}x|^{Q+ps}} \Big|  \int_{B_1(0)} \eta( z')\left(f((\epsilon z')^{-1}x)-f((\epsilon z')^{-1}y) -f(x)+f(y)\right) dz' \Big|^p dx dy \nonumber \\&\leq |B_1(0)|^{p-1}\int_{\mathbb{G}} \int_{\mathbb{G}}   \left(\int_{B_1(0)} \eta^p(z) \frac{|f((\epsilon z)^{-1}x)-f((\epsilon z)^{-1}y) -f(x)+f(y)|^p}{|y^{-1}x|^{Q+ps}} dz  \right) dx dy \nonumber \\&= |B_1(0)|^{p-1}\int_{B_1(0)}\int_{\mathbb{G} \times \mathbb{G}}   \frac{|f((\epsilon z)^{-1}x)-f((\epsilon z)^{-1}y) -f(x)+f(y)|^p}{|y^{-1}x|^{Q+ps}} \eta^p(z) dx\, dy\, dz\nonumber 
\end{align}
Now, we note that for the Lie group $\mathbb{G} \times \mathbb{G}$ with the Haar measure $dx dy$ using the continuity of translations on $L^p(\mathbb{G} \times \mathbb{G})$ (see \cite[Theorem 20.15]{HR79}) we obtain, for $v \in L^p(\mathbb{G} \times \mathbb{G})$ and $(z, z) \in \mathbb{G} \times \mathbb{G},$ that
\begin{align} \label{eq313}
    \lim_{\epsilon \rightarrow 0}\int_{\mathbb{G} \times \mathbb{G}} |v((\epsilon z, \epsilon z)^{-1}(x, y))-v(x,y)|^p dx dy =0.
\end{align}
Now, fix $z \in B_1(0)$ and set 
$$v(x, y):=\frac{f(x)-f(y)}{|y^{-1}x|^{\frac{Q+ps}{p}}}.$$
Observe that $v \in L^p(\mathbb{G} \times \mathbb{G})$ as $f \in X_0^{s,p}(\Omega).$ Therefore, the property \eqref{eq313} yields
\begin{align}
    \lim_{\epsilon \rightarrow 0}\int_{\mathbb{G} \times \mathbb{G}} \frac{|f((\epsilon z)^{-1}x)-f((\epsilon z)^{-1}y)-f(x)+f(y)|^p}{|y^{-1}x|^{Q+ps}} dx dy = 0.
\end{align}
Thus,
\begin{align}
    \rho_\epsilon(z):=\eta^p(z)\int_{\mathbb{G} \times \mathbb{G}} \frac{|f((\epsilon z)^{-1}x)-f((\epsilon z)^{-1}y)-f(x)+f(y)|^p}{|y^{-1}x|^{Q+ps}} dx dy \rightarrow 0
\end{align} as $\epsilon \rightarrow 0.$
Now for a.e. $z \in B_1(0),$ using the fact that $f \in X_0^{s,p}(\Omega)$ we have 
\begin{align}
    |\rho_\epsilon(z)|\leq 2^{p-1} \eta^p(z) &\Bigg( \int_{\mathbb{G} \times \mathbb{G}} \frac{|f((\epsilon z)^{-1}x)-f((\epsilon z)^{-1}y)|^p}{|y^{-1}x|^{Q+ps}} dx dy \nonumber\\&\quad+\int_{\mathbb{G} \times \mathbb{G}} \frac{|f(x)-f(y)|^p}{|y^{-1}x|^{Q+ps}} dx dy\Bigg)\nonumber \\&=
    2^p \eta^p(z) \int_{\mathbb{G} \times \mathbb{G}} \frac{|f(x)-f(y)|^p}{|y^{-1}x|^{Q+ps}} dx dy.
\end{align}
Observe that the last estimate shows $\rho_\epsilon \in L^\infty(B_1(0))$ uniformly  as $\epsilon \rightarrow 0.$ Therefore, by the Lebesgue dominated convergence theorem we conclude that 
\begin{align}
    \int_{B_1(0)}\int_{\mathbb{G} \times \mathbb{G}}  & \frac{|f((\epsilon z)^{-1}x)-f((\epsilon z)^{-1}y) dz-f(x)+f(y)|^p}{|y^{-1}x|^{Q+ps}} \eta^p(z) dx\, dy\, dz \nonumber \\&=\int_{B_1(0)} \rho_\epsilon(z) dz \rightarrow 0
\end{align} as $\epsilon \rightarrow 0.$
This fact along with \eqref{eq312} gives \eqref{eq311} and so \eqref{eq310}.
Finally, by Lemma \ref{lemma3.2} we conclude that $\mathfrak{F}$ is relatively compact in $L^r(\Omega)$. Thus we conclude that the space $X_0^{s,p}(\Omega)$ is compactly embedded in $L^r(\Omega)$ for all $r\in[1,p_s^*)$.
\end{proof}

\section{Appendix B}
	In this section we prove the following important lemma.
	\begin{lemma}\label{lemma ercole}
	Let $u_1, u_2 \in X_{0}^{s, p}(\Omega) \setminus\{0\}$. Then there exists a positive constant $C=C_p$, depending only on $p$, such that
\begin{equation} \label{append}
		\langle\left(-\Delta_{p,{\mathbb{G}}}\right)^s u_1- \left(-\Delta_{p,{\mathbb{G}}}\right)^s u_2, u_1-u_2\rangle\geq C_p\begin{cases}
			[u_1-u_2]_{s,p}^p,&\text{if}~p\geq2\\
			\frac{[u_1-u_2]_{s,p}^2}{\left([u_1]_{s,p}^p+[u_2]_{s,p}^p\right)^{\frac{2-p}{p}}},&\text{if}~1<p<2.
		\end{cases}
	\end{equation}
	\end{lemma}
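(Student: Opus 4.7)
\textbf{Proof plan for Lemma \ref{lemma ercole}.} The strategy is to pass from a classical pointwise algebraic (Simon-type) inequality for the map $J_p(t):=|t|^{p-2}t$ on $\mathbb{R}$ to the claimed global inequality by integrating against the kernel $|y^{-1}x|^{-(Q+ps)}$ and, in the subquadratic range, interpolating via H\"older. The pointwise input is the well-known estimate
\begin{equation*}
\bigl(J_p(a)-J_p(b)\bigr)(a-b)\ \geq\ \begin{cases} c_p\,|a-b|^{p}, & p\geq 2,\\[2pt] c_p\,\dfrac{|a-b|^{2}}{(|a|+|b|)^{2-p}}, & 1<p<2,\end{cases}
\end{equation*}
valid for all $(a,b)\in\mathbb{R}^{2}\setminus\{(0,0)\}$ (the $1<p<2$ case is interpreted as $0$ when $a=b=0$). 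First I would set $U_i(x,y):=u_i(x)-u_i(y)$ so that, by the weak formulation of $(-\Delta_{p,\mathbb{G}})^s$,
\begin{equation*}
\langle(-\Delta_{p,{\mathbb{G}}})^s u_1-(-\Delta_{p,{\mathbb{G}}})^s u_2,\,u_1-u_2\rangle=\iint_{\mathbb{G}\times\mathbb{G}}\frac{\bigl(J_p(U_1)-J_p(U_2)\bigr)(U_1-U_2)}{|y^{-1}x|^{Q+ps}}\,dx\,dy.
\end{equation*}

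\textbf{Case $p\geq 2$.} Apply the pointwise inequality with $a=U_1$, $b=U_2$ inside the integral; the integrand is bounded below by $c_p|U_1-U_2|^p/|y^{-1}x|^{Q+ps}$, and integration immediately gives $c_p[u_1-u_2]_{s,p}^p$, which is the desired bound.

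\textbf{Case $1<p<2$.} The pointwise inequality yields
\begin{equation*}
\langle(-\Delta_{p,{\mathbb{G}}})^s u_1-(-\Delta_{p,{\mathbb{G}}})^s u_2,\,u_1-u_2\rangle\ \geq\ c_p\iint_{\mathbb{G}\times\mathbb{G}}\frac{|U_1-U_2|^{2}}{(|U_1|+|U_2|)^{2-p}\,|y^{-1}x|^{Q+ps}}\,dx\,dy.
\end{equation*}
To recover $[u_1-u_2]_{s,p}^p$ from the right-hand side, I would use the factorisation
\begin{equation*}
\frac{|U_1-U_2|^{p}}{|y^{-1}x|^{Q+ps}}=\left(\frac{|U_1-U_2|^{2}}{(|U_1|+|U_2|)^{2-p}|y^{-1}x|^{Q+ps}}\right)^{p/2}\left(\frac{(|U_1|+|U_2|)^{p}}{|y^{-1}x|^{Q+ps}}\right)^{(2-p)/2},
\end{equation*}
and apply H\"older's inequality with conjugate exponents $2/p$ and $2/(2-p)$. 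This gives
\begin{equation*}
[u_1-u_2]_{s,p}^{p}\ \leq\ \Bigl(c_p^{-1}\,\langle(-\Delta_{p,{\mathbb{G}}})^s u_1-(-\Delta_{p,{\mathbb{G}}})^s u_2,u_1-u_2\rangle\Bigr)^{p/2}\!\Bigl(2^{p-1}\bigl([u_1]_{s,p}^{p}+[u_2]_{s,p}^{p}\bigr)\Bigr)^{(2-p)/2},
\end{equation*}
where I used the elementary convexity estimate $(|U_1|+|U_2|)^{p}\leq 2^{p-1}(|U_1|^{p}+|U_2|^{p})$. Rearranging and taking the $2/p$-power absorbs all constants into a new $C_p>0$ and produces exactly
\begin{equation*}
\langle(-\Delta_{p,{\mathbb{G}}})^s u_1-(-\Delta_{p,{\mathbb{G}}})^s u_2,u_1-u_2\rangle\ \geq\ C_p\,\frac{[u_1-u_2]_{s,p}^{2}}{\bigl([u_1]_{s,p}^{p}+[u_2]_{s,p}^{p}\bigr)^{(2-p)/p}}.
\end{equation*}

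\textbf{Main obstacle.} There is no deep analytic hurdle: the entire argument is algebraic once the pointwise Simon inequality is granted. The only care needed is bookkeeping the constants in the subquadratic case (the interplay between the $p/2$ power and the factor $2^{p-1}$) and handling the measure-zero set where $|U_1|+|U_2|=0$, on which both sides vanish and the decomposition is interpreted by convention. Since the pointwise inequality is classical and the kernel structure of $(-\Delta_{p,\mathbb{G}})^{s}$ is identical (up to replacing the Euclidean distance by $|y^{-1}x|$) to the Euclidean fractional $p$-Laplacian, the stratified setting introduces no additional difficulty.
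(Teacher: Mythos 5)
Your proof is correct and follows essentially the same route as the paper's Appendix B: the same Simon-type pointwise inequality applied to the differences $u_i(x)-u_i(y)$, immediate integration for $p\geq 2$, and the identical H\"older/interpolation trick with exponents $2/p$ and $2/(2-p)$ for $1<p<2$. The only cosmetic difference is the constant in the elementary bound $(|U_1|+|U_2|)^p\leq 2^{p-1}(|U_1|^p+|U_2|^p)$, where the paper uses the looser $2^p$; both are absorbed into $C_p$.
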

\begin{proof}
    Let us recall the well-known Simmon's inequality

\begin{equation}\label{simmon}
\left(|a|^{p-2}a-|b|^{p-2}b\right) \cdot(a-b) \geq C(p) \begin{cases}\frac{|a-b|^{2}}{(|a|+|b|)^{2-p}} & \text { if } \quad 1<p<2 \\ |a-b|^{p} & \text { if } \quad p \geq 2,\end{cases}
\end{equation}

where $a, b \in \mathbb{R}^{N} \setminus\{0\}$ and $C(p)$ is a positive constant depending only on $p$.

 For simplicity we denote
\begin{equation*}
w_i(x, y)=u_i(x)-u_i(y), \quad i=1,2.
\end{equation*}

Therefore,
\begin{equation*}
\langle\left(-\Delta_{p,{\mathbb{G}}}\right)^s u_1- \left(-\Delta_{p,{\mathbb{G}}}\right)^s u_2, u_1-u_2\rangle=\iint_{\mathbb{G}\times\mathbb{G}} \frac{|w_1|^{p-2}w_1-|w_2|^{p-2}w_2}{|y^{-1}x|^{Q+ps}}\left(w_1-w_2\right)dxdy.
\end{equation*}
Observe that for $p\geq 2$ the inequality \eqref{append} immediately follows from the inequality \eqref{simmon}. Thus we are left to establish the inequality \eqref{append} for the range $1<p<2$.

From \eqref{simmon}, we have

\begin{equation}\label{simmin est}
\left\langle\left(-\Delta_{p, \mathbb{G}}\right)^{s} u_1-\left(-\Delta_{p, \mathbb{G}}\right)^{s} u_2, u_1-u_2\right\rangle \geq C(p) \iint_{\mathbb{G}\times\mathbb{G}} \frac{|w_1-w_2|^{2}}{\left(|w_1|+|w_2|\right)^{2-p}|y^{-1}x|^{Q+ps}}dxdy.
\end{equation}

Now from the H\"older's inequality, we get

\begin{align}
[u_1-u_2]_{s,p}^{p} &=\iint_{\mathbb{G}\times\mathbb{G}} \frac{|w_1-w_2|^{p}}{|y^{-1}x|^{Q+ps}}dxdy\nonumber\\
&=\iint_{\mathbb{G}\times\mathbb{G}} \frac{|w_1-w_2|^p}{\left(|w_1|+|w_2|\right)^{\frac{p(2-p)}{2}} |y^{-1}x|^{(Q+ps)\frac{p}{2}}} \frac{\left(|w_1|+|w_2|\right)^{\frac{p(2-p)}{2}}}{|y^{-1}x|^{(Q+ps)\frac{2-p}{2}}}dxdy\leq A^{\frac{p}{2}} B^{\frac{2-p}{2}},
\end{align}
where
\begin{equation*}
    A=\iint_{\mathbb{G}\times\mathbb{G}} \frac{|w_1-w_2|^{2}}{\left(|w_1|+|w_2|\right)^{2-p}|y^{-1}x|^{Q+ps}}dxdy
\end{equation*}
and
\begin{equation*}
    B=\iint_{\mathbb{G}\times\mathbb{G}} \frac{\left(|w_1|+|w_2|\right)^p}{|y^{-1}x|^{Q+ps}}dxdy \leq 2^p\iint_{\mathbb{G}\times\mathbb{G}} \frac{|w_1|^p+|w_2|^p}{|y^{-1}x|^{Q+ps}}dxdy =2^p([u_1]_{s,p}^p +[u_2]_{s,p}^p).
\end{equation*}

From \eqref{simmin est}, we deduce

\begin{align}
\left\langle\left(-\Delta_{p, \mathbb{G}}\right)^{s} u_1-\left(-\Delta_{p, \mathbb{G}}\right)^{s} u_2, u_1-u_2\right\rangle &\geq C(p)A \nonumber \\
& \geq C(p)\left([u_1-u_2]_{s,p}^p B^{-\frac{2-p}{2}}\right)^{\frac{2}{p}} \nonumber\\
& \geq C(p)[u_1-u_2]_{s,p}^2\left(2^p\left([u_1]_{s,p}^p+[u_2]_{s,p}^p\right)\right)^{-\frac{2-p}{p}}\nonumber \\
&=2^{p-2} C(p) \frac{[u_1-u_2]_{s,p}^2}{\left([u_2]_{s,p}^p+[u_2]_{s,p}^p\right)^{\frac{2-p}{p}}},
\end{align}
 completing the proof.
\end{proof}

\section{Conflict of interest statement}
On behalf of all authors, the corresponding author states that there is no conflict of interest.

\section{Data availability statement}
Data sharing not applicable to this article as no datasets were generated or analysed during the current study.

\section*{Acknowledgement}
 The authors are grateful to the reviewer for reading the manuscript carefully, providing several useful comments and suggesting relevant references.     SG would like to thank the Ghent Analysis \& PDE centre, Ghent University, Belgium for the support during his research visit. VK and MR are supported by the FWO Odysseus 1 grant G.0H94.18N: Analysis and Partial
Differential Equations, the Methusalem programme of the Ghent University Special Research Fund (BOF) (Grant number 01M01021) and by FWO Senior Research Grant G011522N. MR is also supported by EPSRC grants
EP/R003025/2 and EP/V005529/1.

%\section*{Disclosure Statement}
%The authors confirm that there are no potential conflict of interest.

%	\section*{References}
%\setlength{\bibsep}{1pt}
%\bibliographystyle{abbrv}
%{\bibliography{Ref_SG_VK_MR}}

\end{document}